\newtheorem{prop}{Proposition}
\newtheorem{thm}{Theorem}
\newtheorem*{mainthm}{Main Theorem}
\newtheorem{lemma}{Lemma}
\newtheorem{cor}{Corollary}
\theoremstyle{remark}
\newtheorem{remark}{Remark}
\theoremstyle{definition}
\newtheorem{definition}{Definition}
\newcommand{\C}{{\mathbb{C}}}
\newcommand{\R}{{\mathbb{R}}}
\newcommand{\Z}{{\mathbb{Z}}}
\newcommand{\hzero}{\operatorname{H}_0^\mathfrak{Fin}(\Gamma; \thinspace R_\C)}
\newcommand{\hone}{\operatorname{H}_1^\mathfrak{Fin}(\Gamma; \thinspace R_\C)}
\newcommand{\hn}{\operatorname{H}_n^\mathfrak{Fin}(\Gamma; \thinspace R_\C)}
\newcommand{\hp}{\operatorname{H}_p^\mathfrak{Fin}(\Gamma; \thinspace R_\C)}
\newcommand{\prerank}{\operatorname{pre-rank}(\partial_1)}
\newcommand{\rank}{\operatorname{rank}_{\mathbb{Z}}}
\title{Equivariant K-homology for hyperbolic reflection groups}
\author[Lafont \and Ortiz \and Rahm \and S\'anchez-Garc\'ia]{Jean-Fran\c{c}ois Lafont \and Ivonne J.~Ortiz \and Alexander D. Rahm \and Rub\'en J.~S\'anchez-Garc\'ia}
\date{\today}
\begin{document}

\begin{abstract}
We compute the equivariant $K$-homology of the classifying space for proper actions, for cocompact 3-dimensional hyperbolic reflection groups. 
This coincides with the topological $K$-theory of the reduced $C^\ast$-algebra associated to the group, via the Baum-Connes conjecture. 
We show that, for any such reflection group, the associated $K$-theory groups are torsion-free. As a result we can promote previous 
rational computations to integral computations. Our proof relies on a new efficient algebraic criterion for checking torsion-freeness of 
$K$-theory groups, which could be applied to many other classes of groups. 
\end{abstract}

\maketitle


\section{Introduction}

For a discrete group $\Gamma$, a general problem is to compute $K_*(C^*_{r}\Gamma)$, the topological $K$-theory of the reduced
$C^*$-algebra of $\Gamma$. The Baum-Connes Conjecture predicts that this functor can be determined, in a homological manner, from
the complex representation rings of the finite subgroups of $\Gamma$. This viewpoint led to general recipes for computing the rational
topological $K$-theory $K_*(C^*_{r}\Gamma)\otimes \mathbb Q$ of groups, through the use of Chern characters (see for instance
L\"uck and Oliver~\cite{LO01} and L\"uck~\cite{L02},~\cite{L07}, as well as related earlier work of Adem~\cite{Adem93}). 
When $\Gamma$ has small homological dimension, 
one can sometimes even give completely explicit formulas for the rational topological $K$-theory, see for instance 
Lafont, Ortiz, and S\'anchez-Garc\'ia~\cite{LOS12} for the case where $\Gamma$ is a $3$-orbifold group.

On the other hand, performing integral calculations for these $K$-theory groups is much harder. For $2$-dimensional crystallographic
groups, such calculations have been done in M. Yang's thesis~\cite{Y97}. This was subsequently extended to the class of cocompact planar groups by
L\"uck and Stamm~\cite{LS00}, and to certain higher dimensional dimensional crystallographic groups by Davis and L\"uck~\cite{DL13}
(see also Langer and L\"uck~\cite{LL12}).
For $3$-dimensional groups, L\"uck~\cite{L05} completed this calculation for the semi-direct product $\text{Hei}_3(\mathbb Z) \rtimes
\mathbb Z_4$ of the $3$-dimensional integral Heisenberg group with a specific action of the cyclic group $\mathbb Z_4$. Some further computations were completed 
by Isely~\cite{I11} for groups of the form $\mathbb Z^2 \rtimes \mathbb Z$; 
by Rahm~\cite{Rahm16} for the class of Bianchi groups;
by Pooya and Valette~\cite{PV16} for solvable Baumslag-Solitar groups;
and by Flores, Pooya and Valette~\cite{FPV16} for lamplighter groups of finite groups. 

Our present paper has two main goals. Our first goal is to add to the list of examples, by providing a formula for the 
integral $K$-theory groups of cocompact 3-dimensional hyperbolic reflection groups.

\begin{mainthm}
Let $\Gamma$ be a cocompact $3$-dimensional hyperbolic reflection group, generated by reflections in the side of a hyperbolic
polyhedron $\mathcal{P} \subset \mathbb{H}^3$. Then 
$$ K_0(C_r^*(\Gamma)) \cong \mathbb Z^{cf(\Gamma)} \ \text{ and }\  K_1(C_r^*(\Gamma)) \cong \mathbb Z^{cf(\Gamma) - \chi(\mathcal C)},$$
where the integers $cf(\Gamma), \chi(\mathcal C)$ can be explicitly computed from the combinatorics of the polyhedron $\mathcal P$.
\end{mainthm}

\vskip 10pt

Here, $cf(\Gamma)$ denotes the number of conjugacy classes of elements of finite order in $\Gamma$, and $\chi(\mathcal C)$ denotes the Euler characteristic of the Bredon chain complex. By a celebrated result of Andre'ev~\cite{Andreev}, there is a simple algorithm that inputs a Coxeter group $\Gamma$, 
and decides whether or not there exists a hyperbolic polyhedron $P_{\Gamma}\subset \mathbb H^3$ which generates $\Gamma$. 
In particular, given an arbitrary Coxeter group, one can easily verify if it satisfies the hypotheses of our {Main Theorem}.

\vskip 10pt

Note that the lack of torsion in the $K$-theory is not a property shared by all discrete groups acting on hyperbolic $3$-space. For example, $2$-torsion occurs in $K_0(C_r^*(\Gamma))$ whenever $\Gamma$ is a Bianchi group 
containing a $2$-dihedral subgroup $C_2 \times C_2$ (see ~\cite{Rahm16}). In fact, the key difficulty in the proof of our Main Theorem lies 
in showing that these $K$-theory groups are torsion-free. Some previous integral computations yielded $K$-theory groups that are 
torsion-free, though in those papers the torsion-freeness was a consequence of ad-hoc computations. 
Our second goal is to give a general criterion which explains the lack of torsion, 
and can be efficiently checked. This allows a systematic, algorithmic approach to the question of whether a $K$-theory group
is torsion-free. 

\vskip 10pt

Let us briefly describe the contents of the paper. In Section~\ref{background}, we provide background material on hyperbolic
reflection groups, topological $K$-theory, and the Baum-Connes Conjecture. We also introduce our main tool, the Atiyah-Hirzebruch
type spectral sequence. In Section~\ref{Bredon-analysis}, 
we use the spectral sequence to show that the $K$-theory groups we are interested in coincide with the 
Bredon homology groups $\hzero$ and $\hone$ respectively. We also explain, using the $\Gamma$-action on $\mathbb H^3$,
why the homology group $\hone$ is torsion-free. In contrast, showing that $\hzero$
is torsion-free is much more difficult. 
In Section~\ref{sec:H0-geom}, we give a geometric proof for this fact in a restricted setting.
In Section~\ref{sec:H0-algebra}, we give a linear algebraic proof in the general case, 
inspired by the ``representation ring splitting'' technique of~\cite{Rahm16}. In particular,
we establish a novel criterion (Theorem~\ref{thm:basetransf}) for verifying that $\hzero$ is torsion-free, 
for any collection of groups $\Gamma$ with prescribed types of finite subgroups. 
In Sections ~\ref{Heisenberg} and ~\ref{crystallographic}, we further illustrate this criterion by applying it to the 
Heisenberg semidirect product group of ~\cite{L05} and to the crystallographic groups of~\cite{DL13} respectively.
As the rank of the $K$-theory groups can be easily computed, this gives an alternate proof of the integral $K$-theoretic 
computations in \cite{L05} and \cite{DL13}.
Finally, in Section~\ref{sec:geom}, we return to our Coxeter groups, and provide an explicit formula for the rank of the 
Bredon homology groups (and hence for the $K$-groups we are interested in), in terms of the combinatorics of the 
polyhedron $\mathcal P$. 

\vskip 5pt

Our paper concludes with two fairly long appendices, which contain all the character tables and induction 
homomorphisms used in our proofs, making our results explicit and self-contained. 

\newpage

\centerline {\bf Acknowledgments}

\vskip 5pt

Portions of this work were carried out during multiple collaborative visits at Ohio State University, Miami University, and the University of Southampton. The authors would like to thank these institutions for their hospitality. The authors would also like to thank the anonymous
referee for several helpful suggestions.

Lafont was partly supported by the NSF, under grant DMS-1510640. Ortiz was partly supported by the NSF, under grant DMS-1207712.
Rahm was supported by Gabor Wiese's University of Luxembourg grant AMFOR.


\section{Background Material}\label{background}


\subsection{$K$-theory and the Baum-Connes Conjecture}

Associated to a discrete group $\Gamma$, one has $C^*_{r}\Gamma$, the {\it reduced $C^*$-algebra} of $\Gamma$. This algebra
is defined to be the closure, with respect to the operator norm, of the linear span of the image of the regular representation
$\lambda: \Gamma \rightarrow B(l^2(\Gamma))$ of $\Gamma$ on the Hilbert space $l^2(\Gamma)$ of square-summable
complex valued functions on $\Gamma$.  This algebra encodes various analytic properties of the group $\Gamma$~\cite{MV03}.

For a $C^*$-algebra $A$, one can define the {\it topological $K$-theory}
groups $K_*(A):= \pi_{*-1}(GL(A))$, which are the homotopy groups of the space $GL(A)$ of invertible matrices with entries in $A$. 
Due to Bott periodicity, there are canonical isomorphisms $K_*(A) \cong K_{*+2}(A)$, and thus it is sufficient to consider $K_0(A)$ and
$K_1(A)$. 

In the special case where $A=C^*_{r}\Gamma$, the {\it Baum-Connes Conjecture} predicts that there is a canonical isomorphism
$K_n^\Gamma (X)\rightarrow K_n(C_r^*(\Gamma))$, where $X$ is a model for $\underline{E}\Gamma$
(the classifying space for $\Gamma$-actions with isotropy in the family of finite subgroups), and $K_*^\Gamma(-)$
is the equivariant $K$-homology functor. The Baum-Connes conjecture has been verified for many classes of groups. We refer
the interested reader to the monograph by Mislin and Valette~\cite{MV03}, or the survey article by L\"uck and Reich~\cite{LR05} for more 
information on these topics.


\subsection{Hyperbolic reflection groups}

We will assume some familiarity with the geometry and topology of Coxeter groups, which the reader can obtain from Davis' 
book~\cite{Davis}. By a $d$-dimensional \emph{hyperbolic polyhedron}, we mean a bounded region of hyperbolic $d$-space $\mathbb{H}^d$ 
delimited by a given finite number of (geodesic) hyperplanes, that is, the intersection of a collection of half-spaces associated to the hyperplanes.
Let $\mathcal{P} \subset \mathbb{H}^d$ be a polyhedron such that all the interior angles between intersecting faces are of the form 
$\pi/m_{ij}$, where the $m_{ij} \ge 2$ are integer (although some pairs of faces may be disjoint).
Let $\Gamma = \Gamma_\mathcal{P}$ be the associated Coxeter group, generated by reflections in the hyperplanes containing the faces 
of $\mathcal{P}$. 

The $\Gamma$-space $\mathbb{H}^d$ is then a model for $\underline{E}\Gamma$, with fundamental domain $\mathcal{P}$. 
This is a strict fundamental domain -- no further points of $\mathcal{P}$ are identified under the group action -- and hence $\mathcal P = \Gamma\backslash{}\mathbb H^d$. Recall that $\Gamma$ admits the following Coxeter presentation:
\begin{equation}\label{CoxeterPresentation}
	\Gamma = \langle s_1, \ldots, s_n \;|\; (s_i s_j)^{m_{ij}} \rangle ,
\end{equation}
where $n$ is the number of distinct hyperplanes enclosing $\mathcal P$, $s_i$ denotes the reflection on the $i^{\text{th}}$ face, and $m_{ij} \ge 2$ are integers such that: $m_{ii}=1$ for all $i$, and, if $i \neq j$, the corresponding faces meet with interior angle $\pi/m_{ij}$. We will write $m_{ij}=\infty$ if the corresponding faces do not intersect. 
For the rest of this article, $d=3$, and $X$ is $\mathbb{H}^3$ with the $\Gamma$-action described above, with fundamental domain $\mathcal P$.


\subsection{Cell structure of the orbit space}
Fix an ordering of the faces of $\mathcal P$ with indexing set $J=\{1, \ldots, n\}$. We will write $\langle S \rangle$ for the subgroup generated by a subset $S \subset \Gamma$. At a vertex of $\mathcal P$, the concurrent faces 
(a minimum of three) must generate a reflection group acting on the 2-sphere, hence it must be a spherical triangle group. This forces the number of incident faces to be exactly three. 
The only finite Coxeter groups acting by reflections on $S^2$ are the triangle groups $\Delta(2,2,m)$ for some $m \ge 2$, $\Delta(2,3,3)$, 
$\Delta(2,3,4)$ and $\Delta(2,3,5)$, where we use the notation
\begin{equation}\label{eqn:TriangleGroups}
	\Delta(p,q,r) = \left\langle s_1, s_2, s_3 \; | \; s_1^2,s_2^2,s_3^2,(s_1s_2)^p,(s_1s_3)^q,(s_2s_3)^r \right\rangle .
\end{equation}
From our compact polyhedron $\mathcal P$, we obtain an induced $\Gamma$-CW-structure on $X=\mathbb H^3$ with: 
\begin{itemize}
\item one orbit of 3-cells, with trivial stabilizer; 
\item $n$ orbits of 2-cells (faces) with stabilizers $\langle s_i \; | \; s_i^2 \rangle \cong \mathbb Z/2$ ($i=1,\ldots,n$); 
\item one orbit of 1-cells (edges) for each unordered pair $i,j \in J$ with $m_{ij} \neq \infty$, with stabilizer a dihedral group $D_{m_{ij}}$ 
--- this group structure can be read off straight from the Coxeter presentation $\langle s_i, s_j \; | \; s_i^2, s_j^2, (s_is_j)^{m_{ij}}\rangle$;  
\item one orbit of 0-cells (vertices) per unordered triple $i,j,k \in J$ with $\langle s_i, s_j, s_k \rangle$ finite, with stabilizer the triangle group $\langle s_i, s_j, s_k \rangle \cong \Delta(m_{ij}, m_{ik}, m_{jk})$.
\end{itemize}
We introduce the following notation for the simplices of $\mathcal P$: 
\begin{eqnarray}\label{eqn:Notation}
f_i & \ \text{ (faces),}\hspace{2.25cm}\nonumber\\
e_{ij}&=f_i \cap f_j \ \text{ (edges)}, \hspace{3.36cm} \\
v_{ijk}&=f_i\cap f_j \cap f_k = e_{ij}\cap e_{ik} \cap e_{jk} \ \text{ (vertices)},\nonumber 
\end{eqnarray}
whenever the intersections are non-empty.


\subsection{A spectral sequence}
We ultimately want to compute the $K$-theory groups of the reduced $C^*$-algebra of $\Gamma$ via the Baum-Connes conjecture. Note that the conjecture holds for these groups: Coxeter groups have the Haagerup property~\cite{BJS88} and hence satisfy Baum-Connes~\cite{HK97}. Therefore, it suffices to compute the equivariant $K$-homology groups $K^\Gamma_*(X)$, since $X$ is a model of $\underline{E}\Gamma$. 
In turn, these groups can be obtained from the Bredon homology of $X$, calculated via an equivariant Atiyah-Hirzebruch spectral sequence coming from the skeletal filtration of the 
$\Gamma$-CW-complex~$X$ (cf.~\cite{Mislin03}). The second page of this spectral sequence is given by the Bredon homology groups 
\begin{equation}\label{SpectralSequence}
	E^2_{p,q} = \begin{cases} \hp & q \text{ even}, \\
								0 & q \text{ odd}.\end{cases}
\end{equation}
The coefficients $R_\C$ of the Bredon homology groups 
are given by the complex representation ring of the cell stabilizers, which are finite subgroups. 
In order to simplify notation, we will often write $H_p$ to denote $\hp$. In our case $\dim(X)=3$, so the Atiyah-Hirzebruch spectral sequence 
is particularly easy to analyze, and gives the following: 

\begin{prop}\label{prop:ses}
There are short exact sequences
\[
	\xymatrix{0 \ar[r] & \textup{coker}(d^3_{3,0}) \ar[r]\ & K_0^\Gamma(X) \ar[r] & H_2 \ar[r] & 0}
\]
and
\[
	\xymatrix{0 \ar[r] & H_1 \ar[r]\ & K_1^\Gamma(X) \ar[r] & \textup{ker}(d^3_{3,0}) \ar[r] & 0},
\]
where $d^3_{3,0} \colon E^3_{3,0}=H_3 \longrightarrow E^3_{0,2}=H_2$ is the differential on the $E^3$-page of the
Atiyah-Hirzebruch spectral sequence.
\end{prop}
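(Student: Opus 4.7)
The plan is to extract both short exact sequences by analyzing the $E^\infty$ page of the Atiyah-Hirzebruch spectral sequence \eqref{SpectralSequence}, using the two simplifying facts that $X$ is 3-dimensional and that the $E^2$ page is concentrated in rows of even $q$.

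First I would determine which differentials can be nonzero. Recall that $d^r \colon E^r_{p,q} \to E^r_{p-r,\, q+r-1}$. On the $E^2$ page, every $d^2$ shifts $q$ by $+1$, so it maps a row with $q$ even to a row with $q$ odd; since the odd rows vanish by \eqref{SpectralSequence}, all $d^2$ are zero, and $E^3 = E^2$. On the $E^3$ page, $d^3$ shifts $q$ by $+2$, preserving parity, so it may be nontrivial. The constraints $p \le 3$ (from $\dim X = 3$) and $p - 3 \ge 0$ force $p = 3$, so up to Bott-periodic shifts in $q$ the only possibly nonzero $d^3$ is
\[
d^3_{3,0}\colon E^3_{3,0} = H_3 \longrightarrow E^3_{0,2} = H_2^{\,\text{(shifted)}} \cong H_0,
\]
using the periodicity of the coefficient system in $q$. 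For $r \ge 4$ the differential $d^r$ has either source or target with $p$ outside $[0,3]$, hence vanishes. Therefore $E^4 = E^\infty$.

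Next I would read off the $E^\infty$ page. For the column $p = 0$, the incoming differential $d^3_{3,0}$ kills its image, leaving $E^\infty_{0,q} \cong \operatorname{coker}(d^3_{3,0})$ for $q$ even. For $p = 3$, the outgoing differential leaves $E^\infty_{3,q} \cong \ker(d^3_{3,0})$ for $q$ even. The middle columns are untouched: $E^\infty_{1,q} = H_1$ and $E^\infty_{2,q} = H_2$ for $q$ even.

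Finally I would assemble the $K$-theory groups from the convergence $E^\infty_{p,q} \Rightarrow K_{p+q}^\Gamma(X)$. Because only two columns contribute to each total degree modulo 2, the filtration on $K_n^\Gamma(X)$ has only two nontrivial subquotients, giving a genuine short exact sequence rather than a longer extension problem. In total degree $0$, the even columns $p = 0$ and $p = 2$ contribute, yielding
\[
0 \longrightarrow \operatorname{coker}(d^3_{3,0}) \longrightarrow K_0^\Gamma(X) \longrightarrow H_2 \longrightarrow 0,
\]
where the left-hand term sits in lower filtration (it comes from $p = 0$) and the quotient is the associated graded at $p = 2$. In total degree $1$, the odd columns $p = 1$ and $p = 3$ contribute, giving
\[
0 \longrightarrow H_1 \longrightarrow K_1^\Gamma(X) \longrightarrow \ker(d^3_{3,0}) \longrightarrow 0.
\]
The only nontrivial point is verifying the direction of these sequences, i.e.\ that the lower-$p$ column lies in the subobject rather than the quotient; this is just the standard convention that the skeletal filtration is increasing in $p$. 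There is no genuine obstacle here, since the dimension hypothesis $\dim X = 3$ combined with the vanishing of odd rows collapses the spectral sequence almost completely, leaving $d^3_{3,0}$ as the sole interesting piece of data.
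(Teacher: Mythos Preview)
Your direct spectral-sequence analysis is correct and essentially complete. The paper, by contrast, gives no argument at all: its entire proof is a one-line citation of Mislin \cite[Theorem 5.29]{Mislin03}, which packages exactly the computation you carried out. So you have supplied the argument the paper outsources; the underlying mathematics is the same.

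One small confusion worth flagging: your identification $E^3_{0,2} = H_2^{\text{(shifted)}} \cong H_0$ is garbled. The periodicity in \eqref{SpectralSequence} is in $q$, not in $p$, so $E^3_{0,2}$ is simply $H_0$ outright---there is no shift, and $H_0$ and $H_2$ are genuinely different Bredon homology groups. (The paper's own statement writes $E^3_{0,2} = H_2$, which is evidently a typo for $H_0$; you seem to have been trying to reconcile with that.) This does not affect your argument: you correctly place $\operatorname{coker}(d^3_{3,0})$ as a quotient of $H_0$ sitting at $E^\infty_{0,0}$, and you correctly observe that the $H_2$ appearing in the first short exact sequence comes from $E^\infty_{2,-2}$, not from the target of the differential.
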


\begin{proof}
This follows at once from a result of Mislin \cite[Theorem 5.29]{Mislin03}. 
\end{proof}


\subsection{Bredon Homology}
To lighten the notation, we write $\Gamma_e$ for the stabilizer in $\Gamma$ of the cell $e$. The Bredon homology groups in Equation (\ref{SpectralSequence}) 
are defined to be the homology groups of the following chain complex:
\begin{equation}\label{eqn:BredonChainComplex}
	\xymatrix{\ldots \ar[r] & \bigoplus_{e \in I_{d}} R_{\mathbb{C}} \left( \Gamma_e \right) \ar[r]^-{\partial_d} & \bigoplus_{e \in I_{d-1}} R_{\mathbb{C}} \left( \Gamma_e \right) \ar[r] &  \ldots},
\end{equation}	
where $I_d$ is a set of orbit representatives of $d$-cells ($d \ge 0$) in $X$. The differentials
$\partial_d$ are defined via the geometric boundary map and induction between representation rings. More precisely, 
if $ge'$ is in the boundary of $e$ ($e \in I_d$, $e' \in I_{d-1}$, $g \in \Gamma$), then $\partial$ restricted to 
$R_{\mathbb{C}} \left( \Gamma_e \right) \rightarrow R_{\mathbb{C}} \left( \Gamma_{e'} \right)$ is given by the composition
$$
	\xymatrix{
		R_{\mathbb{C}} \left( \Gamma_e \right) \ar[r]^-{\text{ind}} & 
		R_{\mathbb{C}} \left( \Gamma_{g e'} \right) \ar[r]^-{\cong} & 
		R_{\mathbb{C}} \left( \Gamma_{e'} \right),
		}
$$	
where the first map is the induction homomorphism of representation rings associated to the subgroup inclusion $\Gamma_{e} \subset \Gamma_{g e'}$, and the second is the isomorphism induced by conjugation $\Gamma_{g e'} = g \Gamma_{e'} g^{-1}$. Finally, we add a sign depending on a chosen (and thereafter fixed) orientation on the faces of ${\mathcal{P}}$. The value $\partial_d (e)$ equals the sum of these maps over all boundary cells of $e$.

Since ${\mathcal{P}}$ is a strict fundamental domain, we can choose the faces of ${\mathcal{P}}$ as orbit representatives. With this choice
of orbit representatives, the $g$ in the previous paragraph is always the identity. We will implicitly make this assumption from now on.


\section{Analyzing the Bredon chain complex for $\Gamma$} \label{Bredon-analysis}

Let $S = \{ s_i \, : \,  1\le i \le n\}$ be the set of Coxeter generators and $J = \{ 1, \ldots, n\}$. Since $X$ is 3-dimensional, the Bredon
chain complex associated to $X$ reduces to
\begin{equation}\label{BredonChainComplex}
	\xymatrix{0 \ar[r] & \mathcal{C}_3 \ar[r]^-{\partial_3} & \mathcal{C}_2 \ar[r]^-{\partial_2} & \mathcal{C}_1 \ar[r]^-{\partial_1} & \mathcal{C}_0 \ar[r] & 0}.
\end{equation}
We now have to analyze the differentials in the above chain complex. 
Recall that for a finite group $G$,
the complex representation ring $R_\mathbb{C}(G)$ is defined as the free abelian group with basis the set of irreducible representations of $G$ 
(the ring structure is not relevant in this setting). Hence $R_\mathbb{C}(G) \cong \mathbb{Z}^{c(G)}$, where we write $c(G)$ for the set of conjugacy classes in $G$.


\subsection{Analysis of $\partial_3$}

Let $G$ be a finite group with irreducible representations $\rho_1, \ldots, \rho_m$ of degree $n_1, \ldots, n_m$, and $\tau$ the only representation of the trivial subgroup $\{1_G\} \le G$. Then $\tau$ induces the regular representation of $G$:
\begin{equation}\label{eqn:RegularRep}
	\text{Ind}_{\{1_G\}}^G(\tau) = n_1\rho_1 + \ldots + n_m\rho_m \,.
\end{equation}
\begin{lemma}
Let $X$ be a $\Gamma$-CW-complex with finite stabilizers, and $k \in \mathbb N$. If there is a unique orbit of $k$-cells and this orbit has trivial stabilizer, then $H_k=0$, provided that $\partial_k \neq 0$.
\end{lemma}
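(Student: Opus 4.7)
The plan is essentially a one-line algebraic observation, so the proof proposal is short. First I would identify the relevant chain group: by hypothesis there is a unique orbit of $k$-cells and its stabilizer is trivial, so
\[
\mathcal{C}_k \;=\; R_{\mathbb{C}}(\{1\}) \;\cong\; \mathbb{Z},
\]
generated by the trivial representation $\tau$. In particular $\mathcal{C}_k$ is infinite cyclic.

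Next I would observe that $\mathcal{C}_{k-1} = \bigoplus_{e \in I_{k-1}} R_{\mathbb{C}}(\Gamma_e)$ is a direct sum of complex representation rings of finite groups, each of which (as an additive group) is free abelian of rank $c(\Gamma_e)$. Hence $\mathcal{C}_{k-1}$ is itself a free abelian group, and in particular torsion-free. Any nonzero homomorphism from $\mathbb{Z}$ into a torsion-free abelian group is automatically injective, so the hypothesis $\partial_k \neq 0$ forces $\ker(\partial_k) = 0$. Therefore
\[
H_k \;=\; \ker(\partial_k)/\operatorname{im}(\partial_{k+1}) \;=\; 0,
\]
as required.

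There is no real obstacle in the argument; the only point worth flagging is that the hypothesis $\partial_k\neq 0$ is genuinely needed, since otherwise $\tau$ would itself be a cycle and contribute a free summand to $H_k$. In applications, one can try to verify this hypothesis concretely via equation~(\ref{eqn:RegularRep}): by the induction formula, $\partial_k(\tau)$ is the signed sum over boundary cells of the regular representations of the corresponding stabilizers, which is generically nonzero unless all of these contributions cancel.
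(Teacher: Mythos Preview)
Your proof is correct and follows essentially the same approach as the paper. The paper's argument is more terse---it simply says $\partial_k(\tau)\neq 0$ implies $\ker(\partial_k)=0$---while you make explicit the reason this implication holds (namely, that $\mathcal{C}_{k-1}$ is torsion-free), which is a helpful clarification.
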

\begin{proof}
The Bredon module $\mathcal{C}_k$ equals $R_\mathbb{C}(\langle 1 \rangle) \cong \mathbb Z$ with generator $\tau$, the trivial representation. Then $\partial_k(\tau) \neq 0$ implies $\ker(\partial_k)=0$;
and therefore the corresponding homology group vanishes. 
\end{proof}

From the lemma, we can easily see that $H_3=0$ if $\partial_3 \neq 0$. Indeed, for $\partial_3 = 0$ to occur, one would need all boundary faces of $\mathcal P$ to be pairwise identified. This cannot happen since $\mathcal P$ is a strict fundamental domain -- the group acts by reflections on the faces. The Lemma then forces $H_3=0$, and Proposition~\ref{prop:ses} gives us:
\begin{cor}\label{prop:analysis3}
We have $K_1^\Gamma(X) \cong H_1$, and there is a short exact sequence
\[
	\xymatrix{0 \ar[r] & H_0 \ar[r]\ & K_0^\Gamma(X) \ar[r] & H_2 \ar[r] & 0}.
\]
\end{cor}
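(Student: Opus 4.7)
The plan is to derive this corollary as a direct consequence of Proposition~\ref{prop:ses} combined with the preceding lemma, the main task being to verify that the hypothesis of the lemma (namely $\partial_3 \neq 0$) is satisfied in our geometric setting.

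First, I would apply the lemma with $k = 3$. The $\Gamma$-CW-structure on $X = \mathbb H^3$ described in the cell structure subsection has a unique orbit of $3$-cells with trivial stabilizer, so the hypothesis on cell orbits is met. What remains to check is that $\partial_3 \neq 0$. For this, I would argue geometrically: $\partial_3$ is built from the induction maps $R_\C(\langle 1\rangle) \to R_\C(\langle s_i\rangle)$ associated to the codimension-one faces of $\mathcal{P}$, together with signs and gluing data. If $\partial_3$ were zero, then each contribution from an individual face of $\mathcal{P}$ would have to be cancelled by another face of $\mathcal{P}$ mapped to the same orbit, i.e.\ the boundary faces of $\mathcal{P}$ would need to be identified pairwise by the $\Gamma$-action. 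But $\mathcal{P}$ is a strict fundamental domain and $\Gamma$ acts by reflections along the faces, so no two distinct faces of $\mathcal{P}$ lie in the same $\Gamma$-orbit. Hence $\partial_3 \neq 0$, and the lemma yields $H_3 = 0$.

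Given $H_3 = 0$, I would then feed this back into Proposition~\ref{prop:ses}. On the $E^3$-page, $E^3_{3,0}$ is a subquotient of $E^2_{3,0} = H_3 = 0$, so $E^3_{3,0} = 0$ as well. The differential $d^3_{3,0} \colon E^3_{3,0} \to E^3_{0,2}$ therefore has trivial domain, whence both $\ker(d^3_{3,0}) = 0$ and $\mathrm{coker}(d^3_{3,0}) = E^3_{0,2}$; note that $E^3_{0,2}$ is itself a subquotient of $H_2$, but the differentials landing in it from the $E^2$- and $E^3$-pages either come from $H_3 = 0$ or from the zero $q$-odd rows, so in fact $E^3_{0,2} = H_2$ and $\mathrm{coker}(d^3_{3,0}) = H_2$ as well. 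Substituting into the second short exact sequence of Proposition~\ref{prop:ses} gives $K_1^\Gamma(X) \cong H_1$, and substituting into the first yields the claimed short exact sequence $0 \to H_0 \to K_0^\Gamma(X) \to H_2 \to 0$.

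The only step with any subtlety is the verification that $\partial_3 \neq 0$. Everything else is bookkeeping with the spectral sequence and a direct appeal to the lemma, so I would expect the write-up to be very short, essentially a one-line invocation of Mislin's result once the strict fundamental domain argument is in place.
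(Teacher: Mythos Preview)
Your approach is exactly the paper's: use the strict fundamental domain property to show $\partial_3\neq 0$, apply the lemma to get $H_3=0$, and then simplify the short exact sequences of Proposition~\ref{prop:ses}.

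There is, however, a slip in your spectral sequence bookkeeping (which was likely induced by a typo in Proposition~\ref{prop:ses} itself). You write that $E^3_{0,2}$ is a subquotient of $H_2$ and conclude $\mathrm{coker}(d^3_{3,0})=H_2$. In fact $E^2_{p,q}=H_p$ for $q$ even, so $E^2_{0,2}=H_0$, not $H_2$; since all $d^2$-differentials vanish (they go to or from odd $q$-rows), $E^3_{0,2}=H_0$ as well, and therefore $\mathrm{coker}(d^3_{3,0})=H_0$. This is precisely what is needed for the first short exact sequence to read $0\to H_0\to K_0^\Gamma(X)\to H_2\to 0$, so your stated conclusion is right even though the intermediate identification contradicts it. With this correction your argument is complete and matches the paper's.
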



\subsection{Analysis of $\partial_2$}\label{subsect:partial2}

Let $f$ be a face of ${\mathcal{P}}$ and $e \in \partial f$ an edge. 
Suppose, using the notation of Equation~(\ref{eqn:Notation}), that $f=f_i$ and $e=e_{ij}$. 
Then we have a map $R_{\mathbb{C}} \left( \langle s_i \rangle \right) \to R_{\mathbb{C}} \left( \langle s_i, s_j \rangle \right)$ induced by inclusion. 
Recall that $\langle s_i \rangle \cong C_2$ and $\langle s_i, s_j \rangle \cong D_{m_{ij}}$. 
Denote the characters of these two finite groups as specified in Tables~\ref{CharTableC2}
and~\ref{CharTableDm}; 
and denote by a character name with the suffix ``$\uparrow$'' 
the character induced in the ambient larger group.
\begin{table}[!ht]
\[
	\begin{array}{c|rr}
		 C_2 & e & s_i\\ 
		 \hline 
		 \rho_1 & 1 & 1\\
		 \rho_2 & 1 & -1
	\end{array}
\] 
\medskip\caption{Character table of $\langle s_i \rangle \cong C_2$.}\label{CharTableC2}
\end{table}
\begin{table}[!ht]
\[
	\begin{array}{c|cc}
		 D_m & (s_is_j)^r & s_j(s_is_j)^r\\ 
		 \hline 
		 \chi_1 & 1 &  1 \\
		 \chi_2 & 1 & -1 \\
		 \widehat{\chi_3} & (-1)^r &  (-1)^r \\
		 \widehat{\chi_4} & (-1)^r &  (-1)^{r+1} \\
		 \phi_p & 2 \cos\left( \frac{2 \pi p r}{m} \right) &  0 \\
	\end{array}
\]
\medskip\caption{Character table of $\langle_i,s_j\rangle \cong D_m$, where $i<j$ and 
$0 \le r \le m-1$, while $1\leq p \leq m/2-1$ if $m$ is even, and $1\leq p\leq (m-1)/2$ if $m$ is odd, 
and where the hat $\ \widehat{ }\ $ denotes a character which appears only when $m$ is even.}
\label{CharTableDm}
\end{table} 

A straightforward analysis (provided in Section~\ref{Rank2inducedMaps} 
of Appendix~\ref{AppendixB}) shows that
\begin{eqnarray*}
	\rho_1 \uparrow & = & \chi_1 + \widehat{\chi_4} + \sum \phi_p\,,\\
	\rho_2 \uparrow & = & \chi_2 + \widehat{\chi_3} + \sum \phi_p\,,
\end{eqnarray*}
if $i < j$, or
\begin{eqnarray*}
	\rho_1 \uparrow & = & \chi_1 + \widehat{\chi_3} + \sum \phi_p\,,\\
	\rho_2 \uparrow & = & \chi_2 + \widehat{\chi_4} + \sum \phi_p\,,
\end{eqnarray*}
if $j < i$. 
Thus the induction map on the representation rings is the morphism of free abelian groups $ \mathbb{Z}^2 \to \mathbb{Z}^{c( D_{m_{ij}} )}$ given by
\begin{eqnarray*}
	(a,b) &\mapsto & \pm(a, b, \widehat{b}, \widehat{a}, a+b, \ldots, a+b) \quad \text{or}\\
	(a,b) &\mapsto & \pm(a, b, \widehat{a}, \widehat{b}, a+b, \ldots, a+b),
\end{eqnarray*}
where again the hat $\ \widehat{ }\ $ denotes an entry which appears only when ${m_{ij}}$ is even.
Using the analysis above, we can now show the following.
\begin{thm} \label{H2 is zero}
If $\mathcal{P}$ is compact, then $H_2=0$.
\end{thm}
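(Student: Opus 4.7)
The plan is to prove $\ker \partial_2 \subseteq \operatorname{im} \partial_3$, which together with the reverse inclusion gives $H_2 = 0$. I would take an arbitrary $2$-cycle
$c = \sum_{i=1}^n (a_i \rho_1^{(i)} + b_i \rho_2^{(i)}) \in \mathcal C_2$,
where $\rho_1^{(i)}, \rho_2^{(i)}$ are the trivial and sign characters of $\langle s_i \rangle \cong C_2$, and expand $\partial_2 c = 0$ coordinate-wise at each edge $e_{ij}$ using the induction formulas recalled in Subsection~\ref{subsect:partial2}. In the basis $(\chi_1, \chi_2, \widehat{\chi_3}, \widehat{\chi_4}, \phi_1, \ldots)$ of $R_\mathbb C(D_{m_{ij}})$, the $\chi_1$ and $\chi_2$ rows yield $\epsilon_i a_i + \epsilon_j a_j = 0$ and $\epsilon_i b_i + \epsilon_j b_j = 0$ for appropriate orientation signs $\epsilon_i, \epsilon_j \in \{\pm 1\}$ (the $\phi_p$ rows are redundant); when $m_{ij}$ is \emph{even}, the extra $\widehat{\chi_3}, \widehat{\chi_4}$ equations combine with these to force $a_i = b_i$ and $a_j = b_j$.

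The hard part will be to promote this local ``even-edge'' conclusion to the global statement that $a_i = b_i$ for \emph{every} face. I would set $v_i := a_i - b_i$ and $B := \{i : v_i \neq 0\}$. By the previous step, a face indexed in $B$ has no even-labelled incident edge; along any odd edge $e_{ij}$ with $i \in B$, the $\chi_1$ and $\chi_2$ equations give $v_j = -\epsilon_i \epsilon_j v_i \neq 0$, so every neighbour of a $B$-face also lies in $B$. Hence $B$ is a union of connected components of the face-adjacency graph of $\mathcal P$, which is connected since it is the $1$-skeleton of the dual cell structure on $\partial\mathcal P \cong S^2$. So $B$ is either empty or all of $\{1,\ldots,n\}$. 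The latter would force every edge of $\mathcal P$ to carry an odd label, contradicting the fact that at each vertex the stabiliser is one of the spherical triangle groups $\Delta(2,2,m), \Delta(2,3,3), \Delta(2,3,4), \Delta(2,3,5)$, each of which has an edge labelled $2$. Therefore $B = \emptyset$.

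With $a_i = b_i =: c_i$, the surviving Bredon cycle equations collapse at each edge to $\epsilon_i c_i + \epsilon_j c_j = 0$, which is precisely the kernel condition for the ordinary cellular boundary $\partial_2^{\mathrm{cell}} \colon C_2(\mathcal P) \to C_1(\mathcal P)$. Since $\mathcal P$ is homeomorphic to a closed $3$-ball, $H_2^{\mathrm{cell}}(\mathcal P) = 0$, so $(c_i) = k(\sigma_i)$ for some $k \in \mathbb Z$, where $\partial_3^{\mathrm{cell}}(1) = \sum_i \sigma_i f_i$. Finally, the induction of the trivial representation from $\{1\}$ to $\langle s_i \rangle$ equals the regular representation $\rho_1^{(i)} + \rho_2^{(i)}$, so $\partial_3(\tau) = \sum_i \sigma_i(\rho_1^{(i)} + \rho_2^{(i)})$ in the Bredon complex, and therefore $c = \partial_3(k\tau) \in \operatorname{im}(\partial_3)$, as required.
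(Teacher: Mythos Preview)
Your proof is correct and follows essentially the same strategy as the paper's: both use the induction formulas at each edge to extract the equations relating the coefficients of adjacent faces, invoke connectedness of $\partial\mathcal P$, use the existence of an even edge label (guaranteed by the spherical vertex stabilisers), and conclude that any $2$-cycle is in the image of $\partial_3$.

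The main difference is organisational. The paper fixes a global orientation on $\mathcal P$ at the outset, which forces the incidence signs at each edge to be opposite; this immediately gives $a_i=a_j$ and $b_i=b_j$ for adjacent faces, so connectedness yields a common value $(a,b)$, one even edge gives $a=b$, and the conclusion $x=\partial_3(a)$ is direct. You instead keep the orientation signs $\epsilon_i,\epsilon_j$ general, argue via the auxiliary quantities $v_i=a_i-b_i$ that $B=\emptyset$, and then identify the remaining cycle condition with the ordinary cellular $\partial_2^{\mathrm{cell}}$ and invoke $H_2^{\mathrm{cell}}(\mathcal P)=0$. Your route is slightly longer but has the mild advantage of making the reduction to ordinary cellular homology explicit; the paper's route is shorter because the orientation choice absorbs the sign bookkeeping up front.
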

\noindent From this theorem and Corollary~\ref{prop:analysis3}, we immediately obtain:
\begin{cor}\label{cor:KequalsH}
$K_0^\Gamma(X) = H_0$ and $K_1^\Gamma(X)=H_1$.
\end{cor}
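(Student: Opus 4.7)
The corollary is essentially a bookkeeping consequence of results already in hand, so the plan is short. The strategy is to combine the outputs of Corollary~\ref{prop:analysis3} and Theorem~\ref{H2 is zero} directly, without invoking any further spectral sequence machinery.

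First, I would restate what Corollary~\ref{prop:analysis3} gives us: the $K_1$ identification $K_1^\Gamma(X) \cong H_1$ is already one of the two conclusions there, so nothing further is needed on the odd side. For the even side, the same corollary supplies the short exact sequence
\[
0 \longrightarrow H_0 \longrightarrow K_0^\Gamma(X) \longrightarrow H_2 \longrightarrow 0.
\]
Next, I would invoke Theorem~\ref{H2 is zero}, which asserts $H_2 = 0$ under the hypothesis that $\mathcal{P}$ is compact (the standing assumption throughout this section). Substituting into the sequence collapses the right-hand term, so the inclusion $H_0 \hookrightarrow K_0^\Gamma(X)$ is forced to be surjective as well, yielding $K_0^\Gamma(X) \cong H_0$.

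There is no real obstacle to overcome here: the content of the statement has been absorbed into Theorem~\ref{H2 is zero} (the vanishing of $H_2$) and Corollary~\ref{prop:analysis3} (the vanishing of $H_3$ together with the extraction of the short exact sequences from the Atiyah--Hirzebruch spectral sequence). The corollary merely records the combined conclusion in a form convenient for the rest of the paper, where the focus will shift entirely to computing $H_0$ and $H_1$.
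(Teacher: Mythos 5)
Your proposal is correct and coincides with the paper's argument: the paper likewise derives the corollary immediately by combining the short exact sequence and the $K_1$ identification from Corollary~\ref{prop:analysis3} with the vanishing $H_2=0$ from Theorem~\ref{H2 is zero}. Nothing further is needed.
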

\begin{proof}[Proof of Theorem \ref{H2 is zero}]
Fix an orientation on the polyhedron $\mathcal P$, and consider the induced orientations on the faces.
At an edge we have two incident faces $f_i$ and $f_j$ with opposite orientations. 
So without loss of generality we have, as a map of free abelian groups,
\begin{eqnarray}\label{InductionDihedral}
	R_{\mathbb{C}} \left( \langle s_i \rangle \right) \oplus  
		R_{\mathbb{C}} \left( \langle s_j \rangle \right)
		& \rightarrow &  
		R_{\mathbb{C}} \left( \langle s_i, s_j \rangle \right),\\
	(a_i,b_i\,|\,a_j,b_j) & \mapsto & (a_i-a_j, b_i-b_j, \widehat{a_i-b_j}, \widehat{b_i-a_j}, S,\ldots, S), \nonumber
\end{eqnarray}
where $S = a_i+b_i-a_j-b_j$, and the elements with a hat $\widehat{\ \ }$ appear only when $m_{ij}$ is even. Note that we use vertical bars `$|$' for clarity, to separate elements coming from different representation rings.

By the preceding analysis, $\partial_2(x)=0$ implies that, for each $i,j \in J$ such that the corresponding faces $f_i$ and $f_j$ meet, we have
	\begin{enumerate}
		\item $a_i = a_j$ and $b_i = b_j$, and 
		\item $a_i = a_j = b_i = b_j$, if $m_{ij}$ is even.
	\end{enumerate}
\noindent Suppose that $f_1, \ldots, f_n$ are the faces of ${\mathcal{P}}$. Let $x = (a_1, b_1| \ldots |a_n, b_n) \in \mathcal C_2$ be an element in $\text{Ker}(\partial_2)$. Note that $\partial \mathcal{P}$ is connected (since $P$ is homeomorphic to $\mathbb D^3$), so by (1) and (2) above, we have that $a_1= \ldots =a_n$ and $b_1=\ldots =b_n$. Since the stabilizer of a vertex is a spherical triangle group, there is an even $m_{ij}$, which also forces $a=b$. 
Therefore, we have $x=(a,a |\ldots |a,a)$, so $x=\partial_3(a)$ (note that the choice of orientation above forces all signs to be positive).
This yields $\textup{ker}(\partial_2) \subseteq \textup{im}(\partial_3)$, which gives the vanishing of the second homology group.
\end{proof}


\subsection{Analysis of $\partial_1$}

Let $e=e_{ij}$ be an edge and $v = v_{ijk} \in \partial e$ a vertex, using the notation of Equation~\eqref{eqn:Notation}. 
We study all possible induction homomorphisms 
$R_{\mathbb{C}} \left( \langle s_i, s_j \rangle \right) \to R_{\mathbb{C}} \left( \langle s_i, s_j, s_k \rangle \right)$ 
in Appendix~\ref{AppendixB}, and in this section use these computations to verify that $H_1$ is torsion-free.

\begin{thm}\label{thm:H1}
There is no torsion in $H_1$.
\end{thm}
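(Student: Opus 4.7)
The plan is to prove a strengthening of the stated claim, namely that $\textup{im}(\partial_2)$ is a \emph{pure} subgroup of $\mathcal{C}_1$: whenever $py \in \textup{im}(\partial_2)$ for some prime $p$ and some $y \in \mathcal{C}_1$, we already have $y \in \textup{im}(\partial_2)$. Since $H_1 = \ker(\partial_1)/\textup{im}(\partial_2)$ embeds in $\mathcal{C}_1/\textup{im}(\partial_2)$, purity of the image immediately forces $H_1$ to be torsion-free. The argument I have in mind is essentially a mod-$p$ refinement of the proof of Theorem~\ref{H2 is zero}, and in this first attempt does not invoke the induction homomorphisms $R_{\mathbb{C}}(D_m) \to R_{\mathbb{C}}(\Delta(p,q,r))$ computed in Appendix~\ref{AppendixB}.

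To carry this out, I would start from $py = \partial_2(w)$ with $w = (a_i, b_i)_{i \in J} \in \mathcal{C}_2$, and extract divisibility conditions via the explicit formula~\eqref{InductionDihedral}. The $e_{ij}$-component of $\partial_2 w$ has entries $a_i - a_j$, $b_i - b_j$, (when $m_{ij}$ is even) $\widehat{a_i - b_j}$ and $\widehat{b_i - a_j}$, and several copies of $S_{ij} = a_i + b_i - a_j - b_j$. Since each of these integers equals $p$ times an entry of $y_{ij}$, I obtain (i) $a_i \equiv a_j$ and $b_i \equiv b_j \pmod{p}$ for every pair $f_i, f_j$ of adjacent faces, and (ii) $a_i \equiv b_j \pmod{p}$ whenever additionally $m_{ij}$ is even. (Signs coming from the orientation are immaterial for divisibility.)

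Next, I would propagate these congruences exactly as in the proof of Theorem~\ref{H2 is zero}. The face graph of $\mathcal{P}$ is connected because $\partial \mathcal{P}$ is a $2$-sphere, so (i) forces all of the $a_i$'s and all of the $b_i$'s to lie in a single residue class modulo $p$. Moreover each vertex stabilizer is one of the spherical triangle groups $\Delta(2,2,m), \Delta(2,3,3), \Delta(2,3,4), \Delta(2,3,5)$, each of which has $2$ as one of its parameters; thus every vertex of $\mathcal{P}$ is incident to an edge with $m_{ij} = 2$, and applying (ii) at such an edge equates the $a$-residue with the $b$-residue. Hence there exists $c \in \mathbb{Z}$ with $a_i \equiv b_i \equiv c \pmod{p}$ for every $i \in J$.

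Finally, I would write $w = pw' + c \cdot \mathbf{1}$, where $\mathbf{1} = (1,1 \mid \cdots \mid 1,1) \in \mathcal{C}_2$ and $w' \in \mathcal{C}_2$ has integer entries. As already observed in the proof of Theorem~\ref{H2 is zero}, $\mathbf{1} = \partial_3(1)$ lies in $\ker(\partial_2)$, so $\partial_2 w = p\, \partial_2 w'$, and therefore $py = p\, \partial_2 w'$; the $\mathbb{Z}$-freeness of $\mathcal{C}_1$ then yields $y = \partial_2 w'$, completing the purity argument. The only delicate step is (ii): it relies crucially on the fact that $2$ appears as a defining parameter of \emph{every} spherical triangle group in dimension three. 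Should this feature fail in a broader setting, one would have to abandon the purely symmetric approach and instead perform a direct linear-algebraic analysis of the cokernel of $\partial_2\big|_{\ker(\partial_1)}$ using the triangle-group induction tables from Appendix~\ref{AppendixB}, which is presumably the systematic machinery the paper later develops for the more recalcitrant group $H_0$.
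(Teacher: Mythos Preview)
Your proof is correct and follows essentially the same route as the paper's own argument: both show purity of $\textup{im}(\partial_2)$ in $\mathcal{C}_1$ by reading off the congruences $a_i\equiv a_j$, $b_i\equiv b_j$ (and $a_i\equiv b_j$ when $m_{ij}$ is even) from formula~\eqref{InductionDihedral}, propagating them via connectedness of $\partial\mathcal P$, and then subtracting a multiple of $\mathbf{1}=\partial_3(1)\in\ker\partial_2$ to make the preimage divisible. The only cosmetic differences are that the paper works with a general modulus $k$ rather than a prime, and invokes the existence of some even $m_{ij}$ rather than singling out $m_{ij}=2$.
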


\begin{proof}
Consider the Bredon chain complex 
\[
\xymatrix{\mathcal C_2 \ar[r]^-{\partial_2} & \mathcal C_1 \ar^-{\partial_1}[r] & \mathcal C_0.} 
\]
To prove that $H_1=\textup{ker}(\partial_1)/\textup{im}(\partial_2)$ is torsion-free, it suffices to prove that $\mathcal C_1/\textup{im}(\partial_2)$ is torsion-free. 
Let $\alpha \in \mathcal C_1$ and $0 \neq k\in \mathbb Z$ such that $k\alpha \in \textup{im}(\partial_2)$. We shall prove that $\alpha \in \textup{im}(\partial_2)$. 

Since $k\alpha \in \textup{im}(\partial_2)$, we can find $\beta \in \mathcal C_2$ with $\partial_2(\beta)=k\alpha$. Suppose that $\mathcal{P}$ has $n$ faces, and write $\beta=(a_1,b_1 | \ldots | a_n,b_n) \in \mathcal C_2$, 
using vertical bars `$|$' to separate elements coming from different representation rings. We shall see that one can find a $1$-chain $\beta '$, homologous
to $\beta$, and with every entry of $\beta '$ a multiple of $k$.

At an edge $e_{ij}$, the differential $\partial_2$ is described in Equation (\ref{InductionDihedral}). 
Since every entry of $\partial_2(\beta)$ is a multiple of $k$, we conclude from Equation (\ref{InductionDihedral}) that for every pair 
of intersecting faces $f_i$ and $f_j$,
\[
	a_i \equiv a_j \pmod k \quad \text{and} \quad b_i \equiv b_j  \pmod k.
\]

Equation (\ref{InductionDihedral}) also shows that $\mathbf{1}_{\partial P} = (1,1| \ldots | 1,1)$, the formal sum over all generators of representation rings of face stabilizers of ${\partial P} \in \mathcal C_2$, is in the kernel of $\partial_2$. In particular, setting $\beta' = \beta - a_1\mathbf 1_{\partial P}$,
we see that $\partial_2(\beta ')=\partial_2(\beta) = \alpha$ and we can assume without loss of generality that $\beta'$ satisfies $a_1' \equiv 0 \pmod k$. 

Let us consider the coefficients for the $1$-chain $\beta '$. For every face $f_j$ intersecting $f_1$, we have $a_1' - a_j' \equiv 0 \pmod k$, which implies $a_j' \equiv 0 \pmod k$. Since $\partial \mathcal P$ is connected, 
repeating this argument we arrive at $a_i' \equiv 0 \pmod k$ for all $i$.
In addition, there are even $m_{ij}$ (the stabilizer of a vertex is a spherical triangle group), 
and hence (\ref{InductionDihedral}) also yields $a_i' - b_j' \equiv 0 \pmod k$, which implies $b _j' \equiv 0 \pmod k$. 
Exactly the same argument as above then ensures that $b_i' \equiv 0 \pmod k$ for all $i$. 

Since all coefficients of $\beta'$ are divisible by $k$, we conclude that $\alpha = \partial_2(\beta ' /k) \in \textup{im}(\partial_2)$, as desired. 
\end{proof}

We note that a similar method of proof can be used, in many cases, to show that $H_0$ is torsion-free -- though the argument becomes
much more complicated. This approach 
is carried out in Section~\ref{sec:H0-geom}. 

\begin{cor} \label{cor:H1}
Let $cf(\Gamma)$ be the number of conjugacy classes of elements of finite order in $\Gamma$, and $\chi(\mathcal C)$ the Euler characteristic of the Bredon chain complex (\ref{BredonChainComplex}). Then we have
\[
H_1 \cong \mathbb Z^{cf(\Gamma) - \chi(\mathcal C)}.
\]
\end{cor}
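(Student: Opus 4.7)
The corollary is essentially a combination of the previous results with an Euler characteristic count, so the proof should be short. I would proceed in three steps.

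\textbf{Step 1 (freeness).} By Theorem~\ref{thm:H1}, $H_1$ has no torsion. Since the representation rings of finite groups are finitely generated, each $\mathcal{C}_i$ is a finitely generated free abelian group, hence $H_1$ is finitely generated. Combining these facts, $H_1 \cong \mathbb{Z}^r$ for some integer $r \geq 0$, and it only remains to identify $r$.

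\textbf{Step 2 (rank via Euler characteristic).} The Euler characteristic of the Bredon chain complex~\eqref{BredonChainComplex} can be computed either at the level of chains or at the level of homology:
\[
\chi(\mathcal{C}) \;=\; \sum_{i=0}^{3}(-1)^i \operatorname{rk}(\mathcal{C}_i) \;=\; \sum_{i=0}^{3}(-1)^i \operatorname{rk}(H_i).
\]
From the arguments preceding Corollary~\ref{prop:analysis3} we have $H_3=0$, from Theorem~\ref{H2 is zero} we have $H_2=0$, and from Step 1, $\operatorname{rk}(H_1) = r$. Substituting yields
\[
\chi(\mathcal{C}) \;=\; \operatorname{rk}(H_0) - r,
\]
so $r = \operatorname{rk}(H_0) - \chi(\mathcal{C})$. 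It therefore suffices to show that $\operatorname{rk}(H_0) = cf(\Gamma)$.

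\textbf{Step 3 (identifying the rank of $H_0$).} By Corollary~\ref{cor:KequalsH}, $H_0 \cong K_0^\Gamma(X)$, and since Coxeter groups satisfy the Baum-Connes conjecture (cf.~the discussion in the subsection on the spectral sequence), $K_0^\Gamma(X) \cong K_0(C_r^*\Gamma)$. It is a classical consequence of the rational form of Baum-Connes — or equivalently of the L\"uck-Chern character for discrete groups — that for a group $\Gamma$ with finitely many conjugacy classes of elements of finite order,
\[
\operatorname{rk}\bigl(K_0(C_r^*\Gamma)\bigr) \;=\; cf(\Gamma).
\]
Our $\Gamma$ has only finitely many such conjugacy classes (they are all contained in the finitely many finite vertex stabilizers of the $\Gamma$-CW-structure on $X$), so this formula applies and gives $\operatorname{rk}(H_0) = cf(\Gamma)$.

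\textbf{Conclusion and main difficulty.} Combining the three steps, $H_1 \cong \mathbb{Z}^{cf(\Gamma) - \chi(\mathcal{C})}$. The argument contains no real obstacle, as the freeness of $H_1$ and the vanishing of $H_2$ and $H_3$ have already been established; the only external ingredient is the standard rank formula $\operatorname{rk}(K_0(C_r^*\Gamma)) = cf(\Gamma)$ used in Step 3. The mildest subtlety is simply being careful that the two computations of $\chi(\mathcal{C})$ agree — which is the usual algebraic fact that Euler characteristic can be computed from either chains or homology for a bounded complex of finitely generated abelian groups.
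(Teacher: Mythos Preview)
Your Steps 1 and 2 are exactly the paper's argument: use Theorem~\ref{thm:H1} for torsion-freeness, then the Euler-characteristic identity $\chi(\mathcal C)=\operatorname{rk}(H_0)-\operatorname{rk}(H_1)$ after plugging in $H_2=H_3=0$.

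The issue is Step 3. The general assertion you invoke,
\[
\operatorname{rk}\bigl(K_0(C_r^*\Gamma)\bigr)=cf(\Gamma)\quad\text{for any }\Gamma\text{ with finitely many finite-order conjugacy classes},
\]
is false. Take $\Gamma=\mathbb Z^2$: then $cf(\Gamma)=1$, but $C_r^*\Gamma\cong C(T^2)$ and $K_0(C(T^2))\cong\mathbb Z^2$. The L\"uck--Chern character does not give $cf(\Gamma)$ as the rank of $K_0$; it expresses the rational $K$-theory as a sum over conjugacy classes of the \emph{homology of centralizers}, which in general contributes more. Your detour through Baum--Connes is also circular here: to compute $\operatorname{rk}(K_0(C_r^*\Gamma))$ you would pass back through the assembly map and the Chern character to Bredon homology, landing exactly on $\operatorname{rk}(H_0)$---which is what you were trying to compute in the first place.

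The paper avoids all of this by citing directly the Bredon-homology fact $\operatorname{rk}(H_0^{\mathfrak{Fin}}(\Gamma;R_{\mathbb C}))=cf(\Gamma)$ from Mislin~\cite{Mislin03}. This is a statement purely about degree-zero Bredon homology (essentially that the colimit of representation rings over the orbit category, tensored with $\mathbb C$, has a basis indexed by finite-order conjugacy classes), and requires neither $K$-theory nor Baum--Connes. Replace your Step~3 with that one-line citation and the proof is complete and identical to the paper's.
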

\begin{proof}
The Euler characteristic of a chain complex coincides with the alternating sum of the ranks of the homology groups, giving us
\[
	\chi(\mathcal C) = \textup{rank}(H_0) - \textup{rank}(H_1) + \textup{rank}(H_2) - \textup{rank}(H_3).
\]
Since $H_3=H_2=0$, we have $\textup{rank}(H_1) = \textup{rank}(H_0) - \chi(\mathcal C)$, and $\textup{rank}(H_0)=cf(\Gamma)$~\cite{Mislin03}.  Since $H_1$ is torsion-free (Theorem~\ref{thm:H1}), the result follows.
\end{proof}

Note that both $cf(\Gamma)$ and $\chi(\mathcal C)$ can be obtained directly from the geometry of the polyhedron $\mathcal P$ or, equivalently, from the Coxeter integers $m_{ij}$. We discuss this further, and give explicit formulas, in Section~\ref{sec:geom}.

\begin{remark}
A previous article by three of the authors~\cite{LOS12} gave an algorithm to compute the rank of the Bredon homology for groups 
$\Gamma$ with a cocompact, 3-manifold model $X$ for the classifying space $\underline{E}\Gamma$. The interested reader can easily
check that the computations in the present paper agree with the calculations in~\cite{LOS12}. 
\end{remark}

To complete the computation of the Bredon homology, and hence of the equivariant $K$-homology, all that remains is to compute the torsion subgroup of $H_0$. 
We will show that in fact $H_0$ is also torsion-free. 

\begin{thm}\label{thm:H0}
There is no torsion in $H_0$.
\end{thm}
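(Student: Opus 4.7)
My plan is to apply the algebraic criterion of Theorem~\ref{thm:basetransf} (the ``representation ring splitting'' technique announced in the introduction). That criterion reduces torsion-freeness of $H_0 = \operatorname{coker}(\partial_1)$ to the existence of an explicit $\mathbb{Z}$-base change on $\mathcal{C}_0$ and $\mathcal{C}_1$ under which the matrix of $\partial_1$ has Smith normal form with all elementary divisors equal to $0$ or $1$; equivalently, $\operatorname{im}(\partial_1)$ is a direct summand of $\mathcal{C}_0$.

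The first step is local to each vertex. Vertex stabilizers come in four families of spherical triangle groups: $\Delta(2,2,m)$, $\Delta(2,3,3)$, $\Delta(2,3,4)$, and $\Delta(2,3,5)$. For each, using the character tables in Appendix~A together with the induction tables in Appendix~B, I would choose an ordered $\mathbb{Z}$-basis of $R_{\mathbb{C}}(\Delta)$ adapted to the three induction homomorphisms from the incident dihedral edge stabilizers $D_{m_{ij}}, D_{m_{ik}}, D_{m_{jk}}$. Concretely, I aim to identify, among the irreducibles of $\Delta$, a set of ``pivot'' representations that each occur with coefficient $\pm 1$ in some specific induced representation from one of the incident edges. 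Such a pivot, together with a chosen generator in the corresponding edge summand of $\mathcal{C}_1$, yields an integral row/column operation which eliminates the pivot from the output of $\partial_1$. The non-pivot irreducibles will be the ``survivors'' in the cokernel.

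The second step is global. I would assemble these per-vertex bases into a single $\mathbb{Z}$-base change on $\mathcal{C}_0$ (and a compatible one on $\mathcal{C}_1$) and check that in the new bases the hypotheses of Theorem~\ref{thm:basetransf} are satisfied. The edge-orientation signs attached to the two endpoints of each edge must be tracked, and pivot choices made at adjacent vertices sharing an edge must not obstruct each other. Finally, combining the conclusion with Corollary~\ref{cor:H1} and the known rank computation $\operatorname{rank}(H_0) = cf(\Gamma)$ yields $H_0 \cong \mathbb{Z}^{cf(\Gamma)}$, as required for the Main Theorem.

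The main obstacle I expect is the local analysis: for each of the four triangle-group families, one must show that the three induction homomorphisms from the incident dihedral subgroups collectively furnish enough pivots to kill every non-survivor irreducible, with the number of survivors matching the expected local contribution to $cf(\Gamma)$. The case $\Delta(2,2,m)$ is delicate because $m$ may be arbitrarily large, so the argument must provide a pivot scheme that is uniform in $m$. The Appendix~B computations are tailored for exactly this, but assembling them into a coherent global reduction, and ensuring no arithmetic obstruction appears on the survivor block, is precisely the content that the criterion Theorem~\ref{thm:basetransf} is designed to encapsulate.
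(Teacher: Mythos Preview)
Your high-level plan---invoke Theorem~\ref{thm:basetransf} and then verify its hypotheses vertex-type by vertex-type using the base transformations of Appendix~\ref{AppendixA} and the induction tables of Appendix~\ref{AppendixB}---is exactly what the paper does. However, your description of the criterion is off, and this leads you to worry about issues that do not actually arise.

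Theorem~\ref{thm:basetransf} does \emph{not} say that torsion-freeness reduces to finding a base change making all elementary divisors $0$ or $1$; that statement is a tautology (it is the definition of torsion-freeness of the cokernel). The actual criterion is purely local: one needs a simultaneous base change on the representation rings so that, for each vertex $v$ separately, the concatenated induction matrix from the three incident edges (the ``vertex block'') has all of its minors in $\{-1,0,1\}$. The inductive proof of Theorem~\ref{thm:basetransf} then assembles these local facts into the global statement about determinant divisors of $\partial_1$, without any further input. Consequently your ``second step'' concern---that pivot choices at adjacent vertices sharing an edge might obstruct one another---is unnecessary: once the base transformation is fixed (Appendix~\ref{AppendixA}), the verification is a finite list of independent local checks (one per vertex stabilizer type, plus a uniform treatment of the family $\Delta(2,2,m)$), and no global compatibility argument is required. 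Your ``pivot/survivor'' language suggests you are heading toward an explicit Gaussian elimination on the whole matrix $\partial_1$; that would indeed raise compatibility issues, and is precisely what the minor criterion is designed to avoid.
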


We postpone the proof to Section~\ref{sec:H0-algebra} below. We note the following immediate consequence of Theorem~\ref{thm:H0}.

\begin{cor}\label{cor:K0}
$K_0^\Gamma(X)$ is torsion-free of rank $cf(\Gamma)$. 
\end{cor}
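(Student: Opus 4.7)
The plan is to apply the representation-ring-splitting framework sketched in the paper's introduction, formalized via the general criterion of Theorem~\ref{thm:basetransf}. Recall that $H_0=\mathcal{C}_0/\mathrm{im}(\partial_1)$, where $\mathcal{C}_0=\bigoplus_v R_{\mathbb{C}}(\Gamma_v)$ is the direct sum of representation rings of the vertex stabilizers (each a spherical triangle group) and $\partial_1$ is a sum of induction homomorphisms from the dihedral edge-stabilizer representation rings. Torsion-freeness of $H_0$ is equivalent to showing that $\mathrm{im}(\partial_1)$ is a pure (saturated) $\mathbb{Z}$-sublattice of $\mathcal{C}_0$, or equivalently that the Smith normal form of the matrix representing $\partial_1$ has only $\pm 1$ on the diagonal.

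The first main step is to reduce the global saturation condition to purely local data at each vertex. At a vertex $v$ with stabilizer $\Delta=\Delta(m_{ij},m_{ik},m_{jk})$, the three incident edges contribute a ``local'' induction map
\[
 \partial_1^v\colon R_{\mathbb{C}}(D_{m_{ij}})\oplus R_{\mathbb{C}}(D_{m_{ik}})\oplus R_{\mathbb{C}}(D_{m_{jk}})\longrightarrow R_{\mathbb{C}}(\Delta).
\]
The role of Theorem~\ref{thm:basetransf} is to package the following assertion: if, for each vertex stabilizer type, one can exhibit an integral change of basis on $R_{\mathbb{C}}(\Delta)$ and on the three dihedral representation rings after which $\partial_1^v$ has a prescribed ``split'' block form (essentially, a unit-diagonal block plus columns that are absorbed into globally recognizable kernel vectors, like the class $\mathbf{1}_{\partial\mathcal{P}}$ used in the proof of Theorem~\ref{thm:H1}), then $\mathrm{im}(\partial_1)$ is globally saturated in $\mathcal{C}_0$. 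I would prove this by tracking how local base changes paste together along edges, using that each edge representation ring feeds into at most two vertices and the conjugacy-class counting $\mathrm{rank}(H_0)=cf(\Gamma)$ of \cite{Mislin03} to detect when the count matches.

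The second main step is the case analysis. By the classification of finite Coxeter groups acting on $S^2$, the only vertex stabilizers that can occur are $\Delta(2,2,m)$ ($m\geq 2$), $\Delta(2,3,3)$, $\Delta(2,3,4)$, and $\Delta(2,3,5)$. For each type, using the character tables of Appendix~A and the explicit induction formulas (``$\rho\uparrow$'') from each dihedral subgroup listed in Appendix~\ref{AppendixB}, I would write down an explicit integral matrix representing $\partial_1^v$, and then exhibit an integer base change verifying the hypothesis of Theorem~\ref{thm:basetransf}. The case $\Delta(2,2,m)$ must be handled uniformly in $m$; the sporadic cases are finite calculations.

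The main obstacle is precisely this last integrality verification. Rational surjectivity of $\partial_1^v\otimes\mathbb{Q}$ follows from Artin induction-type arguments and is not hard, so rational torsion-freeness is easy; however, what is required here is integral saturation. Because the character table of $\Delta(2,3,5)$ involves golden-ratio irrationalities and the character tables of $D_m$ involve $2\cos(2\pi p r/m)$, naive attempts to invert parts of the induction matrix introduce unwanted denominators. The technical heart of the proof is finding, for each triangle group, explicit $\mathbb{Z}$-linear combinations of the induced characters that realize a basis of $R_{\mathbb{C}}(\Delta)$ on the nose, so that after base change the induction matrix becomes integrally unimodular on its image. Once this finite catalog of local calculations is completed, Theorem~\ref{thm:basetransf} assembles them into torsion-freeness of $H_0$ globally, independently of the combinatorics of $\mathcal{P}$.
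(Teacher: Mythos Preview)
Your proposal is really a sketch of Theorem~\ref{thm:H0}; the corollary itself is immediate from that theorem together with Corollary~\ref{cor:KequalsH} (which gives $K_0^\Gamma(X)\cong H_0$) and the identity $\operatorname{rank}(H_0)=cf(\Gamma)$ from~\cite{Mislin03}. The paper does not prove the corollary separately.

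As a sketch of Theorem~\ref{thm:H0}, your high-level plan (localize to vertex blocks, find base changes, assemble via Theorem~\ref{thm:basetransf}) is the paper's plan in Section~\ref{sec:H0-algebra}, but you misdescribe both ingredients. First, the hypothesis of Theorem~\ref{thm:basetransf} is not that $\partial_1^v$ acquires a ``split block form'' or becomes ``unimodular on its image''; it is the purely combinatorial condition that, after a \emph{simultaneous} base change on all the representation rings, \emph{every minor} of every vertex block lies in $\{-1,0,1\}$. Second, the proof of Theorem~\ref{thm:basetransf} does not ``paste local base changes along edges''. It is a Smith-normal-form argument: the $i$-th elementary divisor of $\partial_1$ is $d_i/d_{i-1}$, so it suffices to exhibit, for each $i\le\operatorname{pre-rank}(\partial_1)$, an $i\times i$ minor equal to $\pm1$. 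This is done inductively by enlarging an $(i-1)\times(i-1)$ minor with a square piece lying entirely inside some vertex block; the minor hypothesis then forces the new determinant to be $\pm1$. No edge-compatibility or kernel vectors like $\mathbf{1}_{\partial\mathcal P}$ enter.

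The point you raise about simultaneity is genuine and is handled in the paper not by pasting but by fixing, once and for all, a single transformed basis for each isomorphism type of stabilizer (Appendix~\ref{AppendixA}); the vertex-block minor condition is then checked type by type (including a computer check for the sporadic cases), not vertex by vertex. Your description of the obstacle (irrationalities in character tables, need for integral combinations) is accurate, but the target is the $\{-1,0,1\}$-minor condition, not integral unimodularity of the image.
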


Our Main Theorem now follows immediately by combining Corollaries~\ref{cor:KequalsH}, \ref{cor:H1}, and~\ref{cor:K0}. 
Moreover, in Section~\ref{sec:geom}, we will give a formula for $cf(\Gamma)$ and $\chi(\mathcal C)$ from the combinatorics 
of the polyhedron.


\section{No torsion in $H_0$ -- the geometric approach}\label{sec:H0-geom}

We present a geometric proof for a restricted version of Theorem~\ref{thm:H0}. The method of proof 
is similar to the proof of Theorem~\ref{thm:H1}, but with further technical difficulties. We will show:

\begin{thm}\label{thm:geometricproof}
Assume the compact polyhedron $\mathcal P$ is such that all vertex stabilizers are of the form $D_{m}\times \mathbb Z_2$, where $m\geq 3$ can vary from vertex to vertex. Then there is no torsion in the $0$-dimensional Bredon homology group $\hzero$.
\end{thm}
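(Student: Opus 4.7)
The plan is to adapt the divisibility-propagation strategy used in the proof of Theorem~\ref{thm:H1}. Given $\alpha\in\mathcal C_0$ and a nonzero integer $k$ with $k\alpha\in\textup{im}(\partial_1)$, fix $\beta\in\mathcal C_1$ with $\partial_1(\beta)=k\alpha$; the objective is to produce $\beta'\in\mathcal C_1$ with $\partial_1(\beta')=\partial_1(\beta)$ and every coordinate of $\beta'$ divisible by $k$, for then $\alpha=\partial_1(\beta'/k)\in\textup{im}(\partial_1)$. I would modify $\beta$ only by elements of $\ker(\partial_1)$, built from formal sums of fixed irreducible characters taken over all edges of $\partial\mathcal P$, which play the role that $\mathbf{1}_{\partial\mathcal P}\in\ker(\partial_2)$ plays in the proof of Theorem~\ref{thm:H1}.

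First I would write down explicitly the induction homomorphisms $R_{\mathbb C}(D_{m_{ij}})\to R_{\mathbb C}(\Delta(m_{ij},m_{ik},m_{jk}))$ at each vertex $v_{ijk}$, using the character tables and induction data collected in Appendix~\ref{AppendixB}. The hypothesis that every vertex stabilizer has the form $D_m\times\mathbb Z_2\cong\Delta(2,2,m)$ pins down the local picture: among the three labels $m_{ij},m_{ik},m_{jk}$ meeting at a vertex, exactly two equal $2$ and the third equals some $m\geq 3$. A very useful consequence is that the two-dimensional irreducible characters $\phi_p$ appear only on the distinguished ``high-$m$'' edge incident to each vertex, so under $\partial_1$ the $\phi_p$-coordinates of $\beta$ at one edge never interact with $\phi$-coordinates of a neighbouring edge; only the four linear characters $\chi_1,\chi_2,\widehat{\chi_3},\widehat{\chi_4}$ get intertwined across a vertex.

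Translating $\partial_1(\beta)\equiv 0\pmod k$ vertex by vertex then yields a system of congruences among the linear-character coefficients of $\beta$ at any two edges meeting at a common vertex, closely modeled on the congruences $a_i\equiv a_j$ and $b_i\equiv b_j\pmod k$ in the proof of Theorem~\ref{thm:H1}. Since the graph whose nodes are the edges of $\mathcal P$ and whose adjacencies are the vertices of $\mathcal P$ is connected (as $\partial\mathcal P$ is a sphere), a base-edge normalization by a suitable multiple of the global kernel element coming from $\chi_1$ (and analogously $\chi_2,\widehat{\chi_3},\widehat{\chi_4}$), together with the presence of an even $m_{ij}$ at each vertex, lets one propagate divisibility by $k$ from one edge to every other edge for every linear character in turn.

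The main obstacle I anticipate is the $\phi_p$-coefficients: although they do not transfer across vertices, they still appear in $\partial_1(\beta)$ and must individually be shown to be divisible by $k$. Here the explicit decomposition of $\textup{Ind}_{D_m}^{D_m\times\mathbb Z_2}(\phi_p)$ into irreducibles of the vertex stabilizer, combined with the already-established divisibility of the linear-character coefficients, forces the $\phi_p$-coefficients on each edge to be divisible by $k$ one parameter $p$ at a time. It is precisely this decoupling step that fails if a vertex stabilizer is instead $\Delta(2,3,3)$, $\Delta(2,3,4)$ or $\Delta(2,3,5)$: on such a vertex the three incident edges all carry nontrivial $\phi_p$'s that are intertwined by induction, and the local congruences no longer separate cleanly, which is the essential reason that the unrestricted Theorem~\ref{thm:H0} is left to the algebraic approach of Section~\ref{sec:H0-algebra}.
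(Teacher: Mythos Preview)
Your overall strategy matches the paper's, but the implementation you sketch has a real gap: the corrective $1$-chains you propose --- ``formal sums of a fixed irreducible character over all edges'' --- are generally \emph{not} in $\ker(\partial_1)$. For instance, putting $\chi_1$ with coefficient $1$ on every edge does not give a cycle: at a $\Delta(2,2,m)$ vertex, $\chi_1$ induced from a $D_2$-edge is $\rho_1\otimes\chi_1+\widehat{\rho_1\otimes\chi_4}+\sum_p\rho_1\otimes\phi_p$, whereas from the $D_m$-edge it is $\rho_1\otimes\chi_1+\rho_2\otimes\chi_1$, and no choice of signs makes three such contributions cancel. The analogy with $\mathbf 1_{\partial\mathcal P}\in\ker(\partial_2)$ breaks down precisely because edge stabilizers are not all isomorphic and the induction maps into $D_m\times C_2$ are not uniform enough. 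A related gap is your propagation step: connectedness of the edge-adjacency graph is not sufficient, because that graph has many cycles and the congruences across a vertex (cf.\ Lemma~\ref{bad-red-edge-propagating}) involve a twist $\hat z\equiv b-a$ coming from the red edge, so propagating around a loop can be inconsistent.

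The paper's proof supplies exactly the missing structure. It $2$-colours the edges (blue for $D_2$, red for $D_m$ with $m\ge3$); the hypothesis forces every vertex to have exactly two blue edges, so the blue graph is a disjoint union of simple loops slicing $\partial\mathcal P\cong S^2$ into regions. Within each region one modifies $\beta$ by boundaries $\partial_2(\sum\eta_i)$ with the $\eta_i$ chosen face by face along a spanning tree of the dual graph (Lemma~\ref{choosing-the-etas}) --- these are the corrections you are missing. The residual bad red edges, one per pair of adjacent regions, form together with the blue loops a graph $B$ that is proved to be a \emph{tree} (Lemma~\ref{B-tree}); this acyclicity is what makes the subsequent propagation (Proposition~\ref{coefficients-along-B}) consistent. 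Only at the very end does one subtract genuine $1$-cycles, and these are \emph{local}: $\alpha_{\mathcal E}$ supported on an equivalence class of red edges together with its bounding blue loops, and $\alpha_{13},\alpha_{24}$ supported on a single blue loop. So your instinct that one should subtract cycles is right, but the cycles that actually exist and suffice are locally supported and only become visible after the blue/red geometric decomposition.
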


First, we discuss some terminology and the overall strategy of the proof. Fix $k \ge 2$ an integer. Our overall objective is to rule out $k$-torsion in $H_0$. 
Let $\beta \in C_1$ (in the Bredon chain complex of $\Gamma$) such that $\partial_1(\beta)$ is the zero vector modulo~$k$. 
Note that an element $\alpha \in C_0$ has order $k$ in $H_0 = C_0 / \textup{im}(\partial_1)$ if and only if $k\alpha=\partial_1(\beta)$. 
Recall that
\[
	C_1 = \bigoplus_{e_{ij} \text{ edge}} R_{\mathbb C}\left( \langle s_i, s_j \rangle \right),
\]
is the direct sum of the representation rings (as abelian groups) of the edge stabilizers. 
The coefficients of $x$ supported along a particular edge $e_{ij}$ is by definition the projection of $x$ to 
$\mathbb{Z}^{n_{ij}} \cong R_{\mathbb C}\left( \langle s_i, s_j \rangle \right)$, 
where $n_{ij}$ is the dimension of the representation ring of the edge stabilizer. 
We say an element $x$ is $k$-divisible along an edge $e_{ij}$ provided the coefficients of $x$ supported
along $e_{ij}$ are congruent to zero mod $k$.

By \emph{establishing $k$-divisibility of $\beta$ along an edge $e_{ij}$}, we mean: 
substituting $\beta$ by a homologous element $\beta' \in C_1$ (homologous means that $\partial_1(\beta)=\partial_1(\beta')$), 
such that $\beta '$ is $k$-divisible along $e_{ij}$. When the $1$-chain $\beta$ is clear from the context, we will abuse terminology
and simply say that the edge $e$ is $k$-divisible. We sometimes refer to an edge with stabilizer $D_n$ as an \emph{$n$-edge}, 
or \emph{edge of type $n$}.

Our goal is to replace $\beta$ with a homologous chain $\beta '$ for which all the edges are $k$-divisible, that is,
\begin{enumerate}
\item $\partial_1(\beta ') =\partial_1(\beta)$, and
\item every coefficient in the $1$-chain $\beta '$ is divisible by $k$.
\end{enumerate}
If we can do this, it follows that $\alpha=\partial_1(\beta'/k)$, and hence that $\alpha$ is zero in $H_0$. 

The construction of $\beta'$ is elementary, but somewhat involved. It proceeds via a series of steps, which will be described in the 
following subsections. Sections \ref{color-skeleton} to \ref{fix-remaining-blue-edges} contain the conceptual, geometric arguments 
needed for the proof. At several steps in the proof, we require some technical algebraic lemmas. For the sake of exposition, we 
defer these lemmas and their proofs to the very last Section \ref{formerAppendixC}. 


\subsection{Coloring the $1$-skeleton}\label{color-skeleton}

Recall that $\beta\in \mathcal C_1 = \bigoplus_{e\in \mathcal P^{(1)}} R_{\mathbb C}(\Gamma_e)$, so we can view
$\beta$ as a formal sum of complex representations of the stabilizers of the various edges in the $1$-skeleton of $\mathcal P$. The edge stabilizers are dihedral groups. Let us $2$-color the edges of the polyhedron, blue if the stabilizer is $D_2$, 
and red if the stabilizer is $D_m$, where $m\geq 3$. From our constraints on the vertex groups, we see that every vertex has exactly two incident blue edges. 
Of course, any graph with all vertices of degree $2$ decomposes as a disjoint union of cycles. 

The collection of blue edges thus forms a graph, consisting of pairwise disjoint loops, separating the boundary of the polyhedron $\mathcal P$ (topologically a 2-sphere) into a finite collection of regions, at least two of which must be contractible. The red edges appear in the interior of these individual regions, joining pairs of vertices on the boundary of the region.  
Fixing one such contractible region $R_\infty$, the complement will be planar. We will henceforth fix a planar embedding of this complement. This allows
us to view all the remaining regions as lying in the plane $\mathbb R^2$.

\begin{figure}[!ht]
\definecolor{cqcqcq}{rgb}{0.7529411764705882,0.7529411764705882,0.7529411764705882}
\begin{tikzpicture}[line cap=round,line join=round,>=triangle 45,x=1cm,y=0.65cm,scale=0.45]\clip(-4.761814599999997,-7.724787599999994) rectangle (12.783185400000026,7.448612399999995);\draw [rotate around={-0.4602371359713602:(4.288377000000009,-0.11693970000000042)},line width=1.2pt] (4.288377000000009,-0.11693970000000042) ellipse (8.479257342233103cm and 4.65289957470057cm);\draw [rotate around={6.970995807229445:(1.6148854000000112,0.7452123999999991)},line width=1.2pt] (1.6148854000000112,0.7452123999999991) ellipse (3.4720957401785997cm and 2.0646492629418574cm);\draw [rotate around={-83.60861155019066:(8.17308540000002,0.45481239999999834)},line width=1.2pt] (8.17308540000002,0.45481239999999834) ellipse (2.533467966552616cm and 2.0002241531041753cm);\draw [rotate around={-1.8676788394341683:(0.2233854000000095,0.8783123999999987)},line width=1.2pt] (0.2233854000000095,0.8783123999999987) ellipse (1.4068843266002835cm and 0.8595298589540278cm);\draw [rotate around={1.4502162909334195:(3.3572854000000136,0.9630123999999988)},line width=1.2pt] (3.3572854000000136,0.9630123999999988) ellipse (1.3157690716957189cm and 0.9038350513401275cm);
\begin{normalsize}
\draw[color=black] (0.15,1.0) node {$R_{1}$};
\draw[color=black] (3.3,1.0) node {$R_2$};
\draw[color=black] (8.1,0.5) node {$R_{4}$};
\draw[color=black] (1.7,-1.0) node {$R_{3}$};
\draw[color=black] (5.0,-3.0) node {$R_{5}$};
\end{normalsize}
\end{tikzpicture}
\caption{Example of enumeration of regions (see Section~\ref{enumerate-regions}). }\label{fig:enumeration}
\end{figure}


\subsection{Enumerating the regions.}\label{enumerate-regions}

Our strategy for modifying $\beta$ is as follows. We will work region by region. 
At each stage, we will modify $\beta$ by only changing it on edges contained in the closure of a region. In order to do this, we need to enumerate the regions.

We have already identified the (contractible) region $R_\infty$ -- this will be the last region dealt with. 
In order to decide the order in which we will deal with the remaining regions, we define a partial ordering on the set of regions. 
For distinct regions $R, R'$, we write $R<R'$ if and only if $R$ is contained in a bounded component of $\mathbb R^2 \setminus R'$. 
This defines a partial ordering on the finite set of regions. 
For example, any region which is minimal with respect to this ordering must be simply connected (hence contractible). 
We can thus enumerate the regions $R_1, R_2, \ldots$ so that, for any $i<j$, we have $R_j \not < R_i$ (Figure~\ref{fig:enumeration}).  
We will now deal with the regions in the order they are enumerated. Concretely, the choice of enumeration means that by the time we get to the $i^{th}$ region, 
we have already dealt with all the regions which are ``interior'' to $R_i$ (i.e.~in the bounded components of $\mathbb R^2 \setminus R_i$).

\begin{figure}[!ht]
\input{face_decomposition.tikz}
\hspace{-1cm}
\definecolor{qqqqff}{rgb}{0,0,0}\begin{tikzpicture}[line cap=round,line
join=round,>=triangle
45,x=1cm,y=1cm,scale=0.65]\clip(-1.8445174201700036,-1.451886781374909)
rectangle (7.758566169377614,4.474425630678594);\draw [line width=1.2pt]
(-1,3)-- (0,2);\draw [line width=1.2pt] (0,2)-- (-1,1);\draw [line
width=1.2pt] (0,2)-- (1,1);\draw [line width=1.2pt] (1,1)-- (2,0);\draw
[line width=1.2pt] (2,0)-- (1,-1);\draw [line width=1.2pt] (1,1)--
(3,2);\draw [line width=1.2pt] (3,2)-- (4,1);\draw [line width=1.2pt]
(3,2)-- (3.48,3.48);\draw [line width=1.2pt] (3.48,3.48)-- (5,4);\draw
[line width=1.2pt] (5,4)-- (6,3);\draw [line width=1.2pt] (4,1)--
(6,0);\draw [line width=1.2pt] (4,1)-- (4,-1);\begin{footnotesize}\draw
[fill=qqqqff] (-1,3) circle (2.5pt);\draw[color=qqqqff]
(-0.9276875892781543,3.20315127753474) node {$v_1$};\draw [fill=qqqqff]
(0,2) circle (2.5pt);\draw[color=qqqqff]
(0.07420892695417591,2.2012547613024096) node {$v_2$};\draw
[fill=qqqqff] (-1,1) circle (2.5pt);\draw[color=qqqqff]
(-1.076875892781543,1.299358245070079) node {$v_3$};\draw [fill=qqqqff]
(1,1) circle (2.5pt);\draw[color=qqqqff]
(1.076105443186506,1.299358245070079) node {$v_4$};\draw [fill=qqqqff]
(2,0) circle (2.5pt);\draw[color=qqqqff]
(2.0780019594188364,0.2069135827644685) node {$v_5$};\draw [fill=qqqqff]
(1,-1) circle (2.5pt);\draw[color=qqqqff]
(1.006105443186506,-0.6949829334678621) node {$v_6$};\draw [fill=qqqqff]
(3,2) circle (2.5pt);\draw[color=qqqqff]
(2.7798984756511665,2.2012547613024096) node {$v_7$};\draw [fill=qqqqff]
(4,1) circle (2.5pt);\draw[color=qqqqff]
(4.072343137956777,1.199358245070079) node {$v_8$};\draw [fill=qqqqff]
(3.48,3.48) circle (2.5pt);

\draw[color=qqqqff] (3.6,3.8) node {$v_{11}$};

\draw[fill=qqqqff] (5,4) circle (2.5pt);\draw[color=qqqqff]
(5.074239654189107,4.255047793767071) node {$v_{12}$};\draw
[fill=qqqqff] (6,3) circle (2.5pt);\draw[color=qqqqff]
(6.076136170421437,3.30315127753474) node {$v_{13}$};\draw [fill=qqqqff]
(6,0) circle (2.5pt);\draw[color=qqqqff]
(6.076136170421437,0.2569135827644685) node {$v_{10}$};\draw
[fill=qqqqff] (4,-1) circle (2.5pt);
\draw[color=qqqqff]
(3.7072343137956777,-0.7949829334678621) node {$v_9$};
\end{footnotesize}\end{tikzpicture}
\caption{Contractible region, its dual tree $T$ with an enumeration of its vertices, and the corresponding enumeration of the faces in the region
(see Section~\ref{enumerate-faces}).}\label{fig:decomp}
\end{figure}


\subsection{Enumerating faces within a region.} \label{enumerate-faces}

We now want to establish $k$-divisibility of the red edges inside a fixed region $R$. To do this, we first need to order the $2$-faces inside $R$.
Consider the graph $G$ dual to the decomposition of $R$ into $2$-faces. This graph has one vertex for each $2$-face in $R$, and 
an edge joining a pair of vertices if the corresponding $2$-faces share a (necessarily red) edge. 
Notice that every vertex of $R$ lies on a pair of blue edges. 
It follows that, if we remove the blue edges from the region~$R$, the result deformation retracts to $G$. 

If $s$ denotes the number of regions $R_i<R$ with $\partial R \cap \partial R_i$ non-empty, 
then the first homology $H_1(G) = H_1(R)$ will be free abelian of 
rank $s$. We now choose any spanning tree $T$ for the graph $G$, and note that $G\setminus T$ consists of precisely
$s$ edges, each of which is dual to a red edge inside the region $R$. Since $T$ is a tree, we can enumerate its vertices 
$v_1, v_2, \ldots, v_n$ so that 
\begin{enumerate}
 \item  $v_1$ is a leaf of the tree (vertex of degree one), and 
 \item the subgraph induced by any $\{v_1, \ldots , v_k\}$, $1 \le k \le n$, is connected.
\end{enumerate}
Since vertices of the dual tree correspond to $2$-faces in $R$, this gives us 
an enumeration of the $2$-faces $F_1, F_2, \ldots, F_n$ inside the region $R$. Figure~\ref{fig:decomp} illustrates this process
for a contractible region, while Figure~\ref{fig:noncontratible} gives an illustration for a non-contractible region.

\begin{figure}[!ht]
\input{noncontractible.tikz}
\caption{Non-contractible region and its dual graph $G$. A spanning tree $T$ for $G$ consisting of all edges except $(v_4, v_{17})$ and $(v_{17}, v_{10})$.
The enumeration of the vertices, 
and corresponding enumeration of the faces is done according to Section~\ref{enumerate-faces}.}\label{fig:noncontratible}
\end{figure}


\subsection{Establishing $k$-divisibility of red edges dual to a spanning tree.}\label{fix-edges-dual-to-T}

Continuing to work within a fixed region $R$, we now explain how to establish $k$-divisibility of all the red edges which are dual to the edges in the spanning tree $T$. 
As explained in Section~\ref{enumerate-faces}, we have an enumeration $F_1, F_2, \ldots$ of the $2$-faces contained inside the region $R$. 
With our choice of enumeration, we have guaranteed that each $F_{k+1}$ shares precisely one distinguished red edge with $\bigcup_{i=1}^k F_i$,
distinguished in the sense that this red edge is dual to the unique edge in the tree $T$ connecting the vertex $v_{k+1}$ to the subtree spanned by $v_1, \ldots , v_k$.

We orient the (blue) edges along the boundary loops of $R$ clockwise, and the red edges cutting 
through $R$ in an arbitrary manner. For each $2$-face $F_i$, we want to choose corresponding elements $\eta_i$ in the representation 
ring $R_{\mathbb C}(\Gamma_{F_i}) \cong \mathbb Z^2$, where $\Gamma_{F_i} \cong C_2$ is the stabilizer of $F_i$. These $\eta_i$
shall be chosen such that all red edges dual to $T$ are $k$-divisible for the $1$-chain $\beta + \partial_2\big(\sum \eta_i)$, which is clearly homologous to $\beta$.

Pick an $\eta_1$ arbitrarily. We now assume that $\eta_1, \ldots, \eta_k$ are given, and explain how to choose $\eta_{k+1}$. By our choice of enumeration of vertices, the vertex $v_{k+1}$ is adjacent to some $v_j$ where $j\leq k$. Dual to the two vertices $v_j, v_{k+1}$ we have a pair $F_j, F_{k+1}$ of $2$-faces inside the region $R$. Dual to the edge that joins $v_j$ to $v_{k+1}$ is the (red) edge $F_j \cap F_{k+1}$. We see that $\partial _2( \eta_j + \eta_{k+1})$ is the only term which can change the portion of $\beta$ supported on the edge $F_j \cap F_{k+1}$. Since $\eta_j$ is already given, we want to choose $\eta_{k+1}$ in order to ensure that the resulting $1$-chain {\it is $k$-divisible on the edge $F_j \cap F_{k+1}$}. This will arrange key property (2) for the (red) edge $F_j \cap F_{k+1}$. 

That such an $\eta_{k+1}$ can be chosen is the content of Lemma~\ref{choosing-the-etas} in Section~\ref{formerAppendixC}. 
Iterating this process, we find that the $1$-chain $\beta + \partial_2\big(\sum \eta_i)$ is homologous to $\beta$, and that all red edges dual to edges in $T$ are $k$-divisible for $\beta + \partial_2\big(\sum \eta_i)$. 


\begin{figure}[!ht]
\definecolor{cqcqcq}{rgb}{0.7529411764705882,0.7529411764705882,0.7529411764705882}
\begin{tikzpicture}[line cap=round,line join=round,>=triangle 45,x=1cm,y=0.65cm,scale=0.45]
\clip(-4.76,-7.72) rectangle (12.78,8.5);
\begin{scope}[decoration={
    markings,
    mark=at position 0.8 with {\arrow{>}}}
    ] 
\draw[color=red,line width=1.2pt] (11.0,4.2) -- (10.0,2.0);
\draw[color=red,line width=1.2pt] (2.0,6.7) -- (2.0,4.0);
\draw[color=red,line width=1.2pt] (2.5,-2.2) -- (2.5,-0.1);
\draw[color=red,line width=1.2pt] (-1.0,-1.5) -- (0.5,-0.45);
\end{scope}
\draw [rotate around={-0.4602371359713602:(4.288377000000009,-0.11693970000000042)},line width=1.2pt,blue] (4.288377000000009,-0.11693970000000042) ellipse (8.479257342233103cm and 4.65289957470057cm);
\draw [rotate around={6.970995807229445:(1.6148854000000112,0.7452123999999991)},line width=1.2pt,blue] (1.6148854000000112,0.7452123999999991) ellipse (3.4720957401785997cm and 2.0646492629418574cm);
\draw [rotate around={-83.60861155019066:(8.17308540000002,0.45481239999999834)},line width=1.2pt,blue] (8.17308540000002,0.45481239999999834) ellipse (2.533467966552616cm and 2.0002241531041753cm);
\draw [rotate around={-1.8676788394341683:(0.2233854000000095,0.8783123999999987)},line width=1.2pt,blue] (0.2233854000000095,0.8783123999999987) ellipse (1.4068843266002835cm and 0.8595298589540278cm);
\draw [rotate around={1.4502162909334195:(3.3572854000000136,0.9630123999999988)},line width=1.2pt,blue] (3.3572854000000136,0.9630123999999988) ellipse (1.3157690716957189cm and 0.9038350513401275cm);
\begin{normalsize}
\draw[color=black] (0.15,1.0) node {$R_{1}$};
\draw[color=black] (3.3,1.0) node {$R_2$};
\draw[color=black] (1.7,-1.0) node {$R_{3}$};
\draw[color=black] (8.1,0.5) node {$R_{4}$};
\draw[color=black] (4.5,-4.0) node {$R_{5}$};
\draw[color=black] (0.6,2.9) node {$\gamma_1$};
\draw[color=black] (2.9,2.9) node {$\gamma_2$};
\draw[color=black] (1.5,4.7) node {$\gamma_3$};
\draw[color=black] (8.0,5.0) node {$\gamma_4$};
\draw[color=black] (4.5,7.75) node {$\gamma_5$};
\end{normalsize}
\end{tikzpicture}
\raisebox{1.2cm}{
\begin{tikzpicture}
\clip (-0.7,-1.2) rectangle (2.2,1.2);
\draw (0,0) -- (1,0.5);
\draw (0,0) -- (1,-0.5);
\draw (1,-0.5) -- (2, -1);
\draw (1,-0.5) -- (2,0);
\node at (0, 0.4) {$v_5$};
\node at (1, 0.9) {$v_4$};
\node at (1, -0.1) {$v_3$};
\node at (2, -0.6) {$v_2$};
\node at (2,0.4) {$v_1$};
\draw[fill=white] (0, 0) circle [radius=.1];
\draw[fill=white] (1, 0.5) circle [radius=.1];
\draw[fill=white] (1, -0.5) circle [radius=.1];
\draw[fill=white] (2, 0) circle [radius=.1];
\draw[fill=white] (2, -1) circle [radius=.1];
\end{tikzpicture}}
\caption{(Left) Non-contratible region $R$ with blue loops and red edges. (Right) Graph $B$ associated to $R$. (See Section~\ref{the-graph-B}).}\label{fig:regionr}
\end{figure}

\subsection{Forming the graph $B$.}\label{the-graph-B}
Performing the process in Section~\ref{fix-edges-dual-to-T}
for each region $R$ (including the region $R_\infty$), we finally obtain a $1$-chain homologous to our original $\beta$ 
(which by abuse of notation we will still denote $\beta$) which is $k$-divisible except possibly along:
\begin{itemize}
\item $s_i$ red edges inside each region $R_i$, where $s_i$ denotes the number of regions entirely enclosed by 
the region $R_i$ who share a boundary with $R_i$;
\item all the blue edges inside the $1$-skeleton, which we recall decompose into a finite collection of blue loops.
\end{itemize}

We now use these to form a graph $B$, which captures all the remaining potentially bad edges for the chain $\beta$, i.e. edges that are still not $k$-divisible. 
$B$ is formed with one vertex for each blue loop. Note that each remaining red edge that is potentially not $k$-divisible
joins a pair of vertices which lie on some blue loops $\gamma_1, \gamma_2$ (as all vertices lie on blue loops). 
For each such red edge, we define an edge in the graph $B$ joining the two vertices $v_i$ corresponding to the blue loops $\gamma_i$ (Figure~\ref{fig:regionr}).

\begin{lemma}\label{B-tree}
The graph $B$ is a tree.
\end{lemma}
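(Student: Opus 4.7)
The plan is to show that $B$ has exactly $|V(B)|-1$ edges and is connected, which together imply it is a tree.

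For the vertex-edge count, I would first note that the union of the $n$ blue loops partitions $S^2$ into $r = n+1$ regions (since $n$ disjoint simple closed curves on $S^2$ separate it into $n+1$ components). Each region $R_i$ is a $b_i$-holed sphere, where $b_i$ is the number of blue loops on $\partial R_i$, so $H_1(R_i) \cong \mathbb{Z}^{b_i - 1}$ and hence $s_i = b_i - 1$. Since each blue loop bounds exactly two regions, $\sum_i b_i = 2n$, and so $|E(B)| = \sum_i s_i = 2n - r = n - 1$, while $|V(B)| = n$.

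The crucial local claim is: for each region $R_i$, the $s_i = b_i - 1$ non-tree red edges of $R_i$ form a spanning tree $B_{R_i}$ on the set of $b_i$ boundary blue loops of $R_i$. To prove this, I would cut $R_i$ along its non-tree red edges to obtain a surface $R'_i$. Since $T_i$ (the chosen spanning tree of the dual graph $G_i$) is connected, the 2-cells of $R_i$ remain connected via the tree red edges, so $R'_i$ is connected. Each cut along an arc increases the Euler characteristic by $1$, so $\chi(R'_i) = (2 - b_i) + (b_i - 1) = 1$; being a connected orientable surface with boundary and $\chi = 1$, $R'_i$ must be a disk, in particular with a single boundary circle. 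Now, each cut along an arc alters the number of boundary circles by $+1$ (if both endpoints lie on the same blue loop) or $-1$ (if on distinct blue loops). To reduce the boundary count from $b_i$ to $1$ via $b_i - 1$ cuts, every cut must be of the latter type. Thus every non-tree red edge has endpoints on two distinct blue loops, and the $b_i - 1$ merges identify all $b_i$ loops into a single class; a graph on $b_i$ vertices with $b_i - 1$ edges achieving this is exactly a spanning tree.

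Connectedness of $B$ then follows from the bipartite incidence graph $Q$ with vertex set (blue loops) $\sqcup$ (regions) and an edge for each incidence $\gamma \in \partial R$. Each loop has degree $2$ in $Q$ (being on the boundary of two regions), so $|E(Q)| = 2n$ and $|V(Q)| = n + r$; as $S^2$ is connected, so is $Q$, and since $n + r - 2n = 1$, the graph $Q$ is itself a tree. Given two blue loops $\gamma, \gamma'$, the unique path in $Q$ from $\gamma$ to $\gamma'$ alternates between loops and regions; each loop-region-loop segment passes through some $R_i$ with both loops on $\partial R_i$, and the spanning tree $B_{R_i}$ provides a path between them in $B$. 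Concatenation yields a path in $B$ from $\gamma$ to $\gamma'$, so $B$ is connected. The principal obstacle is the local spanning-tree claim for each $B_{R_i}$, which relies on the cutting analysis above.
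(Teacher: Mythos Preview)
Your argument is correct and takes a genuinely different route from the paper's proof.

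The paper argues acyclicity directly: since each blue loop separates the plane, any putative cycle in $B$ must live in the closure of a single region $R$; one then concatenates the red edges in the cycle with arcs along the blue loops to build an embedded Jordan curve $\eta\subset\overline R$, and derives a contradiction by producing a path of $2$-faces (using the spanning tree $T$ of $G$) that crosses~$\eta$. Connectedness of $B$ is only asserted by reference to the earlier enumeration.

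Your approach is a global Euler-characteristic count ($|V(B)|=n$ loops, $|E(B)|=\sum s_i = \sum(b_i-1)=2n-(n+1)=n-1$) combined with a local structural lemma: in each region $R_i$, cutting along the $b_i-1$ non-tree red arcs yields a connected surface with $\chi=1$, hence a disk, forcing every arc to join distinct boundary loops and the resulting graph $B_{R_i}$ to be a spanning tree on the $b_i$ boundary loops. Connectedness of $B$ then follows by walking through the (tree) bipartite incidence graph~$Q$ of loops and regions, using the local spanning trees $B_{R_i}$ to traverse each region.

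Both arguments rely on planarity in an essential way, but yours extracts more structure: the local spanning-tree statement for each $B_{R_i}$ is strictly stronger than what the paper proves, and your treatment of connectedness is more explicit than the paper's reference to Section~4.3. One minor wording issue: when you write that a cut changes the boundary count by $+1$ ``if both endpoints lie on the same blue loop'' and $-1$ otherwise, this is literally correct only for the first cut; for subsequent cuts the relevant dichotomy is same versus different \emph{current} boundary component. The conclusion is unaffected---since the order of cuts is immaterial, one can cut any given arc first to see it joins distinct blue loops, and the counting argument then shows $B_{R_i}$ has no cycles of any length---but it would be cleaner to argue connectedness of $B_{R_i}$ directly from connectedness of $\partial R'_i$ and then invoke the $(b_i-1)$-edges-on-$b_i$-vertices count.
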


\begin{proof}
It follows immediately from the discussion in Section~\ref{enumerate-faces} that $B$ is connected. Thus it suffices to show $B$ has 
no embedded cycles. Observe that, since blue loops separate the plane into two connected components, the corresponding vertex of $B$ likewise
partitions the graph $B$ into two connected components (corresponding to the ``interior'' component and the ``exterior'' component determined by
the blue loop). Thus if there is an embedded cycle, then for each vertex $v$ of $B$, it must remain within one connected component of $B\setminus \{v\}$. 
This means that any embedded cycle has the property that all the vertices it passes through correspond to blue components lying in the closure
of a single region $R$ (and all red edges lie inside that region $R$).

Now by way of contradiction, assume that $e_1, \ldots ,e_k$ forms a cycle in $B$, cyclically joining vertices $\gamma_1, \ldots ,\gamma_k$,
where the $\gamma_i$ are blue loops inside the closure of the region $R$. We can concatenate the corresponding (red) edges $e_i$ along with paths on the blue loops $\gamma_i$ to obtain an embedded edge loop $\eta$ contained in the closure of the 
region $R$. Now pick a pair of $2$-faces $F_1, F_2$ inside the region 
$R$, where $F_1$ is contained inside the loop $\eta$, while $F_2$ is contained outside of the loop $\eta$, e.g.~pick the
$2$-faces on either side of the red edge $e_1$. Note that, by construction, the closed loop $\eta$ separates these regions from each other.

These two regions correspond to vertices $v_1, v_2$ in the graph $G$ associated to the region $R$. Since $T$ was a spanning 
tree for the graph $G$, it follows 
that we can find a sequence of edges in
the tree $T$ connecting $v_1$ to $v_2$. This gives rise to a sequence of $2$-faces connecting $F_1$ to $F_2$, where each consecutive
face share {\it an edge distinct from any of the red edges $e_i$}. Thus we obtain a continuous path joining $F_1$ to $F_2$ which is completely
disjoint from $\eta$, a contradiction. We conclude that $B$ cannot contain any cycles, and hence is a tree.
\end{proof}

Notice that each vertex in $B$ corresponds to a blue loop, which lies in the closure of precisely two regions. There will thus be a unique
such region which lies in the bounded component of the complement of the loop. This establishes a bijection between the regions and
the vertices of $B$. From the enumeration of regions
in Section \ref{enumerate-regions}, we can use the bijection to enumerate vertices of $B$. For example, the vertex with smallest labelling will always correspond
to the boundary of a contractable region. We refer the reader to Figure \ref{fig:B-labelling} for an illustration of this labelling.

\begin{figure}
\begin{center}
\begin{tikzpicture}

\draw (0,0) -- (1,0.5);
\draw (0,0) -- (1,-0.5);
\draw (1,-0.5) -- (2, -1);
\draw (1,-0.5) -- (2,0);

\node at (0, 0.4) {$v_5$};
\node at (1, 0.9) {$v_4$};
\node at (1, -0.1) {$v_3$};
\node at (2, -0.6) {$v_2$};
\node at (2,0.4) {$v_1$};

\draw[fill=white] (0, 0) circle [radius=.1];
\draw[fill=white] (1, 0.5) circle [radius=.1];
\draw[fill=white] (1, -0.5) circle [radius=.1];
\draw[fill=white] (2, 0) circle [radius=.1];
\draw[fill=white] (2, -1) circle [radius=.1];

\end{tikzpicture}
\caption{Enumeration of vertices in $B$ corresponding to enumeration of regions in Figure \ref{fig:enumeration} (see Section \ref{the-graph-B}).}\label{fig:B-labelling}
\end{center}
\end{figure}


\subsection{Coefficients along the blue loops.}
Our next goal is to modify $\beta$ in order to make all the remaining red edges (i.e.~edges in the graph $B$) $k$-divisible. Note that 
the $1$-chain $\beta$ might not be an integral $1$-cycle, but it is a $1$-cycle mod $k$. We will now exploit this property to 
analyze the behavior of the $1$-chain $\beta$ along the blue loops.

\begin{prop}\label{coefficients-along-B}
Let $\gamma$ be any blue loop, oriented clockwise. Then the coefficients on the blue edges are all congruent to 
each other modulo $k$, that is, if $(a,b,c,d)$ and $(a', b', c', d')$ are the coefficients along any two blue edges in a given
blue loop, then $(a, b, c, d) \equiv (a', b', c', d')$ mod $k$.
Moreover, along any red edge in the graph $B$, the coefficient is congruent to zero mod $k$, except possibly if the edge
has an even label, in which case the coefficient is congruent to $(0, 0, \hat z, -\hat z, 0, \ldots , 0)$ for some $z$ (which
may vary from edge to edge).
\end{prop}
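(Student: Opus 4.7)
The strategy is to write out the congruence $\partial_1(\beta) \equiv 0 \pmod{k}$ at each vertex $v$ in the basis of irreducible characters of the vertex stabilizer $D_m \times \mathbb{Z}_2 \cong \Delta(2,2,m)$, and then to traverse each blue loop, propagating the congruence information from one blue edge to the next. By the standing hypothesis of Section~\ref{sec:H0-geom}, every vertex $v_{ijk}$ has $m_{ij}=m_{ik}=2$ and $m_{jk}=m\geq 3$, so exactly two blue edges and one red edge meet at $v$.

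First I would read off from Appendix~\ref{AppendixB} the two induction homomorphisms $R_{\mathbb{C}}(D_2) \to R_{\mathbb{C}}(D_m \times \mathbb{Z}_2)$ and $R_{\mathbb{C}}(D_m) \to R_{\mathbb{C}}(D_m \times \mathbb{Z}_2)$ corresponding to the inclusions of a blue edge stabilizer and of a red edge stabilizer into a vertex stabilizer. Writing the blue coefficients along the two incident blue edges as $(a,b,c,d)$ and $(a',b',c',d')$ and the red coefficient along the incident red edge as a tuple in $R_{\mathbb{C}}(D_m)$, the equation $\partial_1(\beta)_v \equiv 0 \pmod{k}$ becomes a system of linear congruences in these entries, with signs controlled by the fixed orientation of $\mathcal{P}$.

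Next I would iterate along a fixed blue loop $\gamma$ oriented clockwise: writing $\gamma$ as a cyclic sequence $e^{(1)}, v^{(1)}, e^{(2)}, v^{(2)}, \ldots$ of blue edges and vertices, the vertex equation at $v^{(t)}$ relates the tuples on $e^{(t)}$ and $e^{(t+1)}$. If the red edge incident to $v^{(t)}$ lies in the spanning tree $T$ of Section~\ref{fix-edges-dual-to-T}, it is already $k$-divisible and drops out of the congruence, so the two blue tuples at $v^{(t)}$ must agree mod $k$. If the red edge at $v^{(t)}$ is in $B$, its unknown coefficient enters the vertex congruence, but solving the combined system should simultaneously force the blue tuples to coincide mod $k$ while pinning the non-hat components of the red tuple to $0 \pmod{k}$; the hat components (present only when $m$ is even) drop out of most congruences and survive only as a coupled pair $(\hat z, -\hat z)$. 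Cycling around $\gamma$ then gives the first claim of the proposition, and inspecting the vertex equation at either endpoint of a red edge in $B$ gives the second claim.

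The main obstacle will be bookkeeping in the representation ring of $D_m \times \mathbb{Z}_2$, whose rank $2c(D_m)$ is fairly large. Different irreducible characters of $D_m \times \mathbb{Z}_2$ impose different linear combinations of the blue and red coefficient entries, and one must carefully decouple the system into components that involve only non-hat data from components that involve only hat data. This algebraic decoupling, which I would formulate as a separate technical lemma to be placed in Section~\ref{formerAppendixC} alongside Lemma~\ref{choosing-the-etas}, is exactly what forces the asymmetric conclusion $(0,0,\hat z,-\hat z,0,\ldots,0)$: the non-hat characters of the vertex stabilizer see the two blue edges with opposite signs (forcing blue constancy mod $k$ and red non-hat entries to vanish), while the hat characters pair the two hat entries of the red tuple antisymmetrically.
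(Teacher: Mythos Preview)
Your overall plan---use the local vertex equation $\partial_1(\beta)_v\equiv 0\pmod k$ to relate the two incoming blue tuples to the red tuple, then walk around the loop---is the right starting point, and your observation that $k$-divisible red edges drop out so that consecutive blue tuples coincide is exactly Lemma~\ref{bad-red-edge-propagating} with $\hat z=0$.  The gap is in the step where the incident red edge lies in $B$.  You write that ``solving the combined system should simultaneously force the blue tuples to coincide mod $k$ while pinning the non-hat components of the red tuple to $0\pmod k$.''  This is false at a single vertex.  If you write out the induction matrices (as in the proof of Lemma~\ref{red-edge-starting-in-middle-of-blue-path}), the vertex equation says exactly that the red entries equal the \emph{differences} of the blue entries, e.g.\ $x\equiv a-a'$, $y\equiv b-b'$, $r_i\equiv (a+b)-(a'+b')$.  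Nothing forces $a\equiv a'$ or $x\equiv 0$ locally; you need prior information about one side to conclude the other.  Cycling around $\gamma$ only tells you that the \emph{sum} of the red contributions over all bad vertices vanishes, which is not enough if more than one red edge of $B$ meets $\gamma$.

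The paper supplies the missing information through an induction on the blue loops, using the tree structure of $B$ (Lemma~\ref{B-tree}) and the enumeration of regions from Section~\ref{enumerate-regions}.  For the leaf $v_1$ of $B$ there is only \emph{one} red edge of $B$ incident to $\gamma_1$; viewing $\gamma_1$ as a path based at that vertex $w$, all other incident red edges are $k$-divisible, so Lemma~\ref{bad-red-edge-propagating} (with $\hat z=0$) gives constancy of the blue tuple around the loop, and then Lemma~\ref{red-edge-starting-in-middle-of-blue-path} at $w$ forces the remaining red edge into the form $(0,0,\hat z,-\hat z,0,\dots,0)$.  For a general $\gamma_i$, every edge of $B$ incident to $v_i$ except one goes to a lower-indexed vertex, so by induction those red edges already have the special form; Lemma~\ref{bad-red-edge-propagating} (now with possibly nonzero $\hat z$) again propagates blue constancy, and Lemma~\ref{red-edge-starting-in-middle-of-blue-path} handles the last edge.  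Your proposed single-loop cycling argument would work for leaves of $B$ but not beyond; incorporating the induction on the tree $B$ is what makes the proof go through.
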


\begin{proof}
To see this, we argue inductively according to the ordering of the blue 
connected components (see Section~\ref{the-graph-B}). 

\vskip 10pt

\noindent \underline{Base Case.}
For the initial case, consider
the blue loop corresponding to vertex $v_1$ in the tree $B$. By hypothesis, this blue loop $\gamma$ has a single 
(red) edge incident to it which is potentially not $k$-divisible, corresponding to the unique edge in the tree $B$ incident 
to $v_1$. Let $w$ denote the single vertex on $\gamma$ where that red edge is incident, allowing us to view $\gamma$ as a path starting and
terminating at $w$.  Since all the remaining red edges incident to $\gamma$ are $k$-divisible, applying Lemma~\ref{bad-red-edge-propagating} 
in Section~\ref{formerAppendixC}
(with all the $\hat z \equiv 0$) for each incident $k$-divisible red edge shows that all the coefficients along the path $\gamma$ are 
congruent to each other mod $k$. Note that, in this base case, we are always in the 
cases $i<j<k$ or $j<k<i$ of Lemma~\ref{bad-red-edge-propagating}, according to whether the $k$-divisible red edge lies in the unbounded
or bounded region determined by the blue loop $\gamma$. This establishes the first statement of the Proposition. To get the 
second statement, we apply Lemma~\ref{red-edge-starting-in-middle-of-blue-path} in Section~\ref{formerAppendixC} at the vertex $w$, and we are done.

\vskip 10pt

\noindent \underline{Inductive Step.}
Now inductively, let us assume that we are focusing on the blue loop $\gamma_i$ corresponding to some vertex $v_i$ in $B$. We assume
that all the blue loops $\gamma_j$ corresponding to vertices $v_j$ with $j<i$ already satisfy the desired property. 
We also assume that all red edges in the graph $B$ connected to vertices $v_j$ with $j<i$ have coefficients of the form 
described in the Proposition. 

From the directed structure
of the graph $B$, the vertex $v_i$ has a {\it unique} edge $e$ connecting to a vertex $v_j$ with index $j>i$, and all the remaining edges in $B$ 
connect to a $v_j$ for some $j<i$. 

By the inductive hypothesis, this tells us that all but one of these red edges have coefficients congruent to 
$(0, 0, \hat z_j, -\hat z_j, 0, \ldots , 0)$ for some $z_j$ (which might depend on the edge). Again, viewing $\gamma_i$ as a path starting
and terminating at the same vertex $w$ (where $e$ is incident to $\gamma$), we may 
apply Lemma~\ref{bad-red-edge-propagating} and conclude that $\gamma_i$ has coefficients along all edges that are congruent to each other.
Applying Lemma~\ref{red-edge-starting-in-middle-of-blue-path} at the vertex $w$, shows that the coefficients along the edge $e$ must also be
of the form $(0, 0, \hat z, -\hat z, 0, \ldots ,0)$ for some $z$. This completes the inductive step and the proof of the Proposition.
\end{proof}


\subsection{Equivalence classes of red edges.}\label{equiv-classes}

Now consider a red edge which is potentially not $k$-divisible, corresponding to an edge in the graph $B$ joining vertices $v_i$ to $v_j$. The 
red edge thus joins the blue loop $\gamma_i$ to the blue loop $\gamma_j$. From Proposition~\ref{coefficients-along-B}, 
we see that the coefficient along the red edge must be congruent to $(0, 0, \hat z, -\hat z, 0, \ldots , 0)$ for some $z$. In particular, 
there is a single residue class that determines the coefficients along the red edge (modulo $k$).

Next let us momentarily focus on a blue loop $\gamma$, and assume the coefficients along the edges of $\gamma$ are
all congruent to $(a, b, c, d)$ modulo $k$, as ensured by Proposition~\ref{coefficients-along-B} (Figure~\ref{fig:badedges}). We define an equivalence relation 
on {\it all} the red edges with {\it even} label incident to $\gamma$, by defining the two equivalence classes: 
\begin{enumerate}[(a)]
\item the incident red edges that lie in the bounded region corresponding
to $\gamma$, and 
\item those that lie in the unbounded region. 
\end{enumerate}
It follows from Lemma~\ref{bad-red-edge-propagating} (Section~\ref{formerAppendixC}), that all edges in the equivalence class (a) have corresponding coefficients congruent to 
$(0, 0, \hat z_j, -\hat z_j, 0, \ldots , 0)$ where each $z_j\equiv b-a$, while all edges in equivalence class (b) have corresponding
coefficients congruent to $(0, 0, \hat z_j, -\hat z_j, 0, \ldots , 0)$ where each $z_j\equiv c-a$ (Figure~\ref{fig:badedges}).

\begin{figure}[!ht]
\definecolor{ccqqqq}{rgb}{0.6,0,0}\begin{tikzpicture}[line cap=round,line join=round,>=triangle 45,x=1cm,y=1cm,scale=0.8]
\clip(-1.6191988010203326,-1.183314694699264) rectangle (10.058071017791868,5.692073412577307);
\draw [rotate around={0.6182757795167048:(4.14,2.11)},line width=1.2pt] (4.14,2.11) ellipse (3.501093220102061cm and 2.127992889049359cm);\draw [->,line width=1.2pt,color=ccqqqq] (7.488097885108166,2.7472140408110732) -- (8.600120581948126,2.7393573291239957);\draw [->,line width=1.2pt,color=ccqqqq] (4.724452461293627,0.01742294867507162) -- (4.8460539703472305,-1.0798086934175564);\draw [->,line width=1.2pt,color=ccqqqq] (1.5954637544941253,3.5701970260400935) -- (0.9321506563160604,4.2003309897385);\draw [->,line width=1.2pt,color=ccqqqq] (3.867532361617296,4.242647824560858) -- (3.842887123120313,5.460839150501987);\draw [line width=1.2pt,color=ccqqqq] (8.59950278530242,2.7398424581463576)-- (9.503025670086753,2.7309622949931303);\draw [line width=1.2pt,color=ccqqqq] (3.8731259845199135,4.248416299195301)-- (3.8256763635236863,6.271788234035418);\draw [line width=1.2pt,color=ccqqqq] (1.6056223472394953,3.5692302021891558)-- (0.28852959228697656,4.851947588036795);\draw [line width=1.2pt,color=ccqqqq] (4.7202940191223055,0.026055580102842955)-- (4.903054541268602,-1.5583196751680441);\draw [line width=1.2pt,color=ccqqqq] (1.832077735466305,0.5027881590830325)-- (2.7858474845282264,2.00497551385557);\draw [->,line width=1.2pt,color=ccqqqq] (2.783463060155572,2.00497551385557) -- (2.2612741225441697,1.177580256544347);\draw [line width=1.2pt,color=ccqqqq] (4.762701387151866,2.6067121328140526)-- (4.856831791501789,4.206929006762748);\draw [->,line width=1.2pt,color=ccqqqq] (4.856831791501789,4.2016995398544195) -- (4.801891010700238,3.226934589877934);\draw [line width=1.2pt,color=ccqqqq] (6.14327629658087,0.4043007561063583)-- (5.7921914727548565,1.7058835176076848);\draw [->,line width=1.2pt,color=ccqqqq] (5.7921914727548565,1.6973204731241234) -- (6.006267584843889,0.9180834251200398);

\draw [color=ccqqqq](3.2078037253375276,2.267867790712998) node[anchor=north west] {$z_j \equiv b-a$};

\draw (2.6,4.7) node[anchor=north west] {$\gamma$};

\draw (-1.6,0.2) node[anchor=north west] {congruent to $(a,b,c,d)$};

\draw [color=ccqqqq](6.391073623995989,0.136695570424703) node[anchor=north west] {$z_j \equiv c-a$};

\draw [line width=1.2pt] (3.237222841161298,4.326835297701799)-- (3.0785187157254197,4.130675620944725)-- (3.306071899720335,3.9911961372264946);\draw [->,line width=1.2pt] (0.802525504382432,0.22467884940583605) -- (1.3666296983590542,0.6679035732446115);\end{tikzpicture}
\caption{Bad red edges incident to blue loop $\gamma$. All \emph{interior} edges have coefficients satisfying one congruence, while all \emph{exterior} edges have coefficients satisfying a different congruence (Section~\ref{equiv-classes}).}
\label{fig:badedges}
\end{figure}

This equivalence relation is defined locally, and can be extended over all blue loops in the $1$-skeleton, resulting in an equivalence
relation on the collection of all red edges with even label. Observe that, by construction, each equivalence class has the property
that there is a single corresponding residue class $z$ mod $k$, with the property that {\it all} the edges within that equivalence
class have coefficient congruent to $(0, 0, \hat z, -\hat z, 0, \ldots , 0)$, i.e.~the $z$ is the same for the entire equivalence class. 

\begin{cor}
The edges in the graph $B$ that are not $k$-divisible are a finite union of equivalence classes for this relation.
\end{cor}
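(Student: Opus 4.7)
The plan is to observe that the edges of $B$ split cleanly according to the parity of their stabilizer's label, and that within the collection of even-labeled edges the residue mod $k$ governing non-$k$-divisibility is by construction an invariant of the equivalence class. First I would dispose of odd-labeled edges: for a red edge in $B$ whose label $m_{ij}$ is odd, the characters $\widehat{\chi_3}, \widehat{\chi_4}$ do not exist in the corresponding dihedral representation ring, so the expression $(0,0,\widehat{z},-\widehat{z},0,\ldots,0)$ from Proposition~\ref{coefficients-along-B} degenerates to $(0,0,\ldots,0) \pmod k$. Such edges are automatically $k$-divisible and contribute nothing to the non-$k$-divisible locus.

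Next I would restrict attention to even-labeled red edges in $B$. The equivalence relation of Section~\ref{equiv-classes} was defined precisely so that, when two even-labeled edges $e, e'$ are directly equivalent through a common incident blue loop $\gamma$ with edge coefficients congruent to $(a,b,c,d) \bmod k$, the associated residues $z(e), z(e')$ are both equal to either $b-a$ or $c-a \bmod k$ (depending on whether $e$ and $e'$ lie in the bounded or unbounded region determined by $\gamma$). Iterating along chains of incident blue loops, the residue $z$ is an invariant of the equivalence class. Consequently, for any class $C$ either $z \equiv 0 \pmod k$ — in which case every edge of $C$ has coefficient $\equiv (0,\ldots,0) \pmod k$ and so is $k$-divisible — or $z \not\equiv 0 \pmod k$, in which case no edge of $C$ is $k$-divisible.

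Finally, since $\mathcal P$ is a compact polyhedron with finitely many faces, the graph $B$ has finitely many edges, and in particular the quotient by the equivalence relation of Section~\ref{equiv-classes} is finite. Combining the three observations, the set of edges of $B$ that are not $k$-divisible is a finite disjoint union of equivalence classes, namely exactly those even-labeled classes whose invariant $z$ is nonzero mod $k$. The only step that deserves care is verifying that the residue $z$ is genuinely constant along an entire class, but this is precisely the content of Proposition~\ref{coefficients-along-B} together with the propagation Lemma~\ref{bad-red-edge-propagating}, so no new difficulty is expected.
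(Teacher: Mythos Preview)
Your proof is correct and follows essentially the same approach as the paper: both arguments rest on the fact that the residue $z$ is constant on equivalence classes, so each class is either entirely $k$-divisible or entirely not. The paper frames this more tersely as a contrapositive (if a non-$k$-divisible edge in $B$ were equivalent to an edge outside $B$, the latter's $k$-divisibility would force $z\equiv 0$ for the whole class, a contradiction), but your direct classification by $z$-value, together with the explicit disposal of odd-labeled edges, is equivalent.
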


\begin{proof} 
Let $e$ be an edge in $B$, and assume that $e$ is equivalent to an edge $e'$ which is {\it not} an edge in $B$. Since all
edges that are not in $B$ are $k$-divisible, it follows that the coefficient on $e'$ is congruent to zero mod $k$. Thus the value of $z$ 
for the equivalence class $\mathcal E$ containing $e'$ is $z=0$. Since $e\in \mathcal E$, this forces $e$ to be $k$-divisible, a contradiction.
\end{proof}


\subsection{Establishing $k$-divisibility of the remaining red edges.}\label{fix-remaining-red-edges}

Observe that the edges in each equivalence class form a connected subgraph, and hence a subtree (see Lemma~\ref{B-tree}), 
of the graph $B$. This collection
of subtrees partitions the graph $B$. Any vertex of $B$ is incident to at most 
two such subtrees -- the incident red edges lying ``inside" and ``outside" the corresponding blue loop. 

We now proceed to establish $k$-divisibility of the remaining red edges for our chain. Fix an equivalence class $\mathcal E$ of red edges, and associate to
it a $1$-chain $\alpha_{\mathcal E}$ whose coefficients are given as follows:
\begin{enumerate}
\item if a blue loop $\gamma$ has an incident red edge $e\in \mathcal E$, and $e$ lies in the {\it bounded} region of $\gamma$,
then assign $(0,1,0,0)$ to each blue edge on $\gamma$;
\item if a blue loop $\gamma$ has an incident red edge $e\in \mathcal E$, and $e$ lies in the {\it unbounded} region of $\gamma$, 
then assign $(0,0,1,0)$ to each blue edge on $\gamma$;
\item along the red edges in the equivalence class (recall that all these edges have even labels), assign $\pm (0, 0, \hat 1, -\hat 1, 0, \ldots , 0)$, with sign chosen to ensure that the local $1$-cycle condition holds at both endpoints (see Lemma~\ref{bad-red-edge-propagating}).
\end{enumerate}
Notice that, one can choose the signs in (3) coherently because the equivalence class defines a subtree of the tree $B$ -- and thus 
there are no cycles (these could have potentially forced the sign along an edge to be both positive and negative).
Another key feature of the $1$-chains $\alpha_{\mathcal E}$ is that they are linearly independent. More precisely, two distinct 
equivalence classes $\mathcal E, \mathcal E'$ have associated $1$-chains $\alpha_{\mathcal E}$ and $\alpha_{\mathcal E'}$
whose supports are disjoint, except possibly along a single blue loop $\gamma$. In the case where the supports overlap along
$\gamma$, adding multiples of $\alpha_{\mathcal E}$ does not affect the $z$-value along the class $\mathcal E '$ (and vice
versa). 

 It is now immediate from the equality case of Lemma~\ref{bad-red-edge-propagating} in Section~\ref{formerAppendixC} that the $1$-chain $\alpha_{\mathcal E}$ is
in fact an integral $1$-cycle. 
Subtracting multiples of $\alpha_{\mathcal E}$ from our given chain $\beta$, we may thus obtain a homologous $1$-chain for
which all the red edges in $\mathcal E$ are now $k$-divisible. Repeating this for each of the equivalence classes, we have now obtained
a homologous $1$-chain (still denoted $\beta$) for which {\it all the red edges are $k$-divisible.}


\subsection{Establishing $k$-divisibility of the remaining blue edges.}\label{fix-remaining-blue-edges}

We now have obtained a $1$-chain with prescribed differential, whose coefficients along all red edges are $k$-divisible. It remains
to establish $k$-divisibility of the blue loops for the $1$-chain. 

If $\gamma$ is one of the blue loops, then since all incident red edges are $k$-divisible, we see that all the edges on $\gamma$ have 
coefficients which are congruent to either $(a, b, c, d)$, $(a, b, a, b)$, $(a, a, c, c)$, or $(a, a, a, a)$, according to the 
equivalence classes that are incident to $\gamma$ (see also Proposition \ref{coefficients-along-B}). 

Let us discuss, as an example, the case $(a, b, a, b)$. Note that this case occurs if the
only incident red edges to $\gamma$ with even label lie in the unbounded region determined by $\gamma$. Consider the pair
of integral $1$-cycles $\alpha_{13}, \alpha_{24}$ supported on $\gamma$, obtained by assigning to each edge on $\gamma$ the 
coefficient $(1, 0, 1, 0)$ and $(0, 1, 0, 1)$ respectively. From the equality case of Lemma~\ref{bad-red-edge-propagating} 
we see that $\alpha_{13}, \alpha_{24}$ are in fact $1$-cycles. By adding multiples of $\alpha_{13}, \alpha_{24}$, we can
now arrange for the coefficients along the blue loop $\gamma$ to all be $k$-divisible. The three other cases can be dealt with similarly; 
we leave the details to the reader.


\subsection{Completing the proof.}

Performing this process described in Section~\ref{fix-remaining-blue-edges}
for all the blue components, we finally obtain the
desired $1$-chain $\beta ^\prime$. Since $\beta ^\prime$ now satisfies properties (1) and (2) mentioned at the beginning of the
proof, we conclude that the given hypothetical torsion class $\alpha \in H_0$ was in fact the zero class. This completes the
proof of Theorem~\ref{thm:geometricproof}.

\begin{remark}\label{rem:222}
It is not obvious how to adapt the strategy in the geometric proof above to the case when other vertex stabilizer types are 
allowed. In the case of vertex stabilizers of the type $\Delta(2,3,3)$, $\Delta(2,3,4)$, and $\Delta(2,3,5)$, most of the 
arguments can be adapted. The main difficulty lies in the arguments of Section~\ref{fix-edges-dual-to-T}, which rely
heavily on Lemma~\ref{bad-red-edge-propagating} in Section~\ref{formerAppendixC}. Unfortunately, the analogue of that Lemma does not seem to hold when one
allows these other types of vertices as endpoints of the edge.
For vertices with stabilizer $\Delta(2,2,2)$, additionally difficulties arise, notably in Sections~\ref{equiv-classes}, \ref{fix-remaining-red-edges}, and~\ref{fix-remaining-blue-edges}.
\end{remark}


\subsection{Local Analysis}\label{formerAppendixC}
The geometric proof that $\hzero$ is torsion free, Theorem~\ref{thm:geometricproof}, relies on a detailed local analysis of the 
induction homomorphism at the vertices of the polyhedron $\mathcal P$.  We state and prove the results needed here. Although 
rather technical, they are all, unless an explicit proof is given, straightforward consequences of the induction homomorphisms 
in Appendix~\ref{AppendixB}. Let us introduce some notation. Throughout this section, $\alpha$ will denote an integral $1$-chain
that is also a $1$-cycle (mod $k$), i.e. $\partial_1(\alpha)\equiv 0$. Our goal is to understand how this condition constrains the coefficients of $\alpha$.

At every vertex $v=v_{ijk}$, there are three incident edges $e_1=e_{ij}$, $e_2=e_{ik}$ and $e_3=e_{jk}$, and let $x_1$, $x_2$ and $x_3$ be $\alpha$ projected along those edges, written as column vectors. Write $A_1$ for the integer matrix representing the induction homomorphism from $e_1$ to $v$, that is, 
$
	R_{\mathbb C}(\langle s_i, s_j \rangle) \longrightarrow  R_{\mathbb C}(\langle s_i, s_j, s_k \rangle)
$
(as always, we will implicitly identify representation rings with free abelian groups, via the bases explicitly described in Appendix~\ref{AppendixA}), and define $A_2$ and $A_3$ analogously for $e_2$ and $e_3$. So each of these matrices is a submatrix of $\partial_1$ in matrix form. Then the value of $\partial_1(x)$ at the vertex $v=v_{ijk}$ (that is, projected to $R_{\mathbb C}(\langle s_i, s_j, s_k \rangle)$) is given by the matrix product
$
	\left( \pm A_1 | \pm A_2 | \pm A_3 \right) \cdot \begin{pmatrix} x_1\\ x_2 \\ x_3 \end{pmatrix},
$
with signs depending on edge orientations. 
The product above is zero modulo $k$, by the hypothesis on $\alpha$ being a 1-cycle mod $k$.  We can reduce modulo $k$ all the entries and, abusing notation, still call the resulting matrices and vectors $A_1$, $A_2$, $A_3$, $x_1$, $x_2$ and $x_3$. Furthermore, for simplicity, let us redefine $A_1$ as $-A_1$ etc as needed to take account of the chosen orientations. Then we have that the column vector representing $\alpha$ locally at $v$ (i.e.~along the incident edges) is in the kernel of the matrix representing $\partial_1$ locally at $v$, that is,
$
	\begin{pmatrix} x_1\\ x_2 \\ x_3 \end{pmatrix} \in \ker \left( A_1 | A_2 | A_3 \right).
$
One important consequence is that we can perform row operations on the matrix $A = \left( A_1 | A_2 | A_3 \right)$ without changing its kernel and, 
in particular, we may row reduce $A$, for instance into its Hermite normal form, to simplify calculations. 
(Obviously, row reduction must be performed modulo $k$, that is, in $\mathbb{Z}/k\Z$.) 
Another consequence is that, to study the consequences of establishing $k$-divisibility of an edge, we only need to remove the corresponding matrix block and vector. 
For example, if $k$-divisibility of $e_1$ has been established for $\alpha$, that is, if $x_1$ is zero modulo $k$, then the equation above is equivalent to
$
	\begin{pmatrix} x_2 \\ x_3 \end{pmatrix} \in \ker \left( A_2 | A_3 \right), 
$
and we can now row reduce this matrix to help us calculate its kernel, if needed. Recall that, throughout Section~\ref{sec:H0-geom}, we are only interested
in the case where all vertices have stabilizers of the form $\Delta(2,2,m)$, with $m>2$ (see statement of Theorem \ref{thm:geometricproof}). So let us now focus on that case.

For a $\Delta(2,2,m)$, $m>2$, vertex, recall that we assume $j<k$ and we have, from Figures~\ref{InductionDelta22mCase1}, \ref{InductionDelta22mCase2} and~\ref{InductionDelta22mCase3}, induction matrices $M_{2,m}^{(1)<}$ (if $i<j$) or $M_{2,m}^{(1)>}$ (if $i>j$), $M_{2,m}^{(2)<}$ (if $i<k$) or $M_{2,m}^{>}$ (if $i>k$), and $M_{m,m}$ where
\[ 
	M_{2,m}^{(1)<} =
	\footnotesize
	\left( \begin{array}{cccc}
		1&0&0&0\\
		0&1&0&0\\
		\widehat{0}&\widehat{1}&\widehat{0}&\widehat{0}\\
		\widehat{1}&\widehat{0}&\widehat{0}&\widehat{0}\\ 
		1&1&0&0\\
		\vdots&\vdots&\vdots&\vdots\\
		1&1&0&0\\
		0&0&1&0\\
		0&0&0&1\\
		\widehat{0}&\widehat{0}&\widehat{0}&\widehat{1}\\
		\widehat{0}&\widehat{0}&\widehat{1}&\widehat{0}\\ 
		0&0&1&1\\
		\vdots&\vdots&\vdots&\vdots\\
		0&0&1&1
	\end{array} \right)
	{\normalsize \text{, }
	M_{2,m}^{(2)<} =}
	\footnotesize
	\left( \begin{array}{cccc}
		1&0&0&0\\
		0&1&0&0\\
		\widehat{1}&\widehat{0}&\widehat{0}&\widehat{0}\\
		\widehat{0}&\widehat{1}&\widehat{0}&\widehat{0}\\ 
		1&1&0&0\\
		\vdots&\vdots&\vdots&\vdots\\
		1&1&0&0\\
		0&0&1&0\\
		0&0&0&1\\
		\widehat{0}&\widehat{0}&\widehat{1}&\widehat{0}\\
		\widehat{0}&\widehat{0}&\widehat{0}&\widehat{1}\\ 
		0&0&1&1\\
		\vdots&\vdots&\vdots&\vdots\\
		0&0&1&1
	\end{array} \right) 	
	{\normalsize \text{, }
	M_{m,m} =}
 \left( \begin {array}{c} {\normalsize \text{Id}_{c(D_m)}}\\[0.5ex] \hline \\[-1ex]  {\normalsize \text{Id}_{c(D_m)}}
\end {array} \right),
\]
and $M_{2,m}^{(1)>}$, respectively $M_{2,m}^{(2)>}$, equals $M_{2,m}^{(1)<}$, respectively $M_{2,m}^{(2)<}$, with the 2nd and 3rd columns interchanged. 

In the following lemmas, recall that $\alpha$ is an integral $1$-chain which is also a $1$-cycle mod $k$. Moreover, in Lemmas~\ref{bad-red-edge-propagating} 
and~\ref{red-edge-starting-in-middle-of-blue-path}, let $\widehat{\ }$ denote coefficients that only appear when $m$ is even, and recall the standard labelling 
of faces: the $m$-edge lies between the faces labelled $F_j$ and $F_k$, and the labelling always satisfies (without loss of generality) that $j<k$. Finally, recall 
that we refer to an edge with stabiliser $D_m$ as an \emph{$m$-edge}, or \emph{edge of type $m$}. 

\begin{lemma}\label{choosing-the-etas}
Let $\alpha$ be an integral $1$-chain, which we assume is also a $1$-cycle mod $k$. Let $F_1$, $F_2$ be a pair of adjacent $2$-faces, 
sharing a common edge $e$, with endpoint $v$ whose stabilizer is of the form $\Delta(2,2,m)$, with $m>2$. 
Assume that we are given an $\eta_1 = (n_1, m_1)$ in the representation ring $R_{\mathbb C}(C_2)$ associated 
to the stabilizer of the $2$-face $F_1$. Then there exists a choice of $\eta_2=(n_2, m_2)$ in the representation ring $R_{\mathbb C}(C_2)$ associated to the stabilizer of the $2$-face $F_2$, with the property that $\alpha + \partial_2(\eta_1+\eta_2)$ has coefficient along $e$
congruent to zero mod $k$ (i.e.~the edge $e$ is now $k$-divisible).
\end{lemma}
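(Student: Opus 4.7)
The plan is to reformulate $k$-divisibility of $e$ as a small system of linear congruences modulo $k$ in the two unknowns $(n_2,m_2)$, and then verify that the $1$-cycle hypothesis on $\alpha$ at $v$ makes the system solvable. Label the shared edge $e=e_{ij}$ so that $F_1=f_i$, $F_2=f_j$ with $i<j$, and write the coefficient of $\alpha$ along $e$ as $(a_1,a_2,\widehat{a_3},\widehat{a_4},b_1,\dots,b_s)$ in the basis $\chi_1,\chi_2,\widehat{\chi_3},\widehat{\chi_4},\phi_1,\dots,\phi_s$ of $R_{\mathbb C}(D_m)$, hatted entries being present exactly when $m$ is even. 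Equation~(\ref{InductionDihedral}) expresses the contribution of $\partial_2(\eta_1+\eta_2)$ along $e$ as $(n_1-n_2,\,m_1-m_2,\,\widehat{n_1-m_2},\,\widehat{m_1-n_2},\,S,\dots,S)$, where $S=n_1+m_1-n_2-m_2$. Thus $k$-divisibility of $e$ for $\alpha+\partial_2(\eta_1+\eta_2)$ is equivalent to the congruence system $n_2\equiv n_1+a_1$, $m_2\equiv m_1+a_2$, $S\equiv -b_p$ for every $p$, and, when $m$ is even, the additional hat congruences $m_2\equiv n_1+\widehat{a_3}$ and $n_2\equiv m_1+\widehat{a_4}$ (all modulo $k$).

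The first step is to set $n_2\equiv n_1+a_1$ and $m_2\equiv m_1+a_2\pmod k$, which kills the $\chi_1,\chi_2$ coordinates directly and makes $S\equiv -(a_1+a_2)\pmod k$. The $\phi_p$ coordinates then vanish provided $b_p\equiv a_1+a_2\pmod k$ for every $p$. I plan to extract this identity from the $1$-cycle hypothesis $\partial_1\alpha\equiv 0\pmod k$ at $v$: writing the local boundary matrix at $v$ as $\bigl(\pm M^{(1)}_{2,m}\,|\,\pm M^{(2)}_{2,m}\,|\,\pm M_{m,m}\bigr)$ applied to the coefficient vectors $(y_1,y_2,y_3)$ on the two blue edges and on $e$, the $\chi_1\otimes 1$ and $\chi_2\otimes 1$ rows produce $p_1+q_1\equiv -a_1$ and $p_2+q_2\equiv -a_2$, while the $\phi_p\otimes 1$ rows give $p_1+p_2+q_1+q_2\equiv -b_p$; adding the first two and comparing with the third yields $b_p\equiv a_1+a_2\pmod k$ for every $p$. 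For $m$ odd this suffices, and the chosen $\eta_2$ is the desired one.

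The main obstacle is the case $m$ even, where the hat congruences $m_2\equiv n_1+\widehat{a_3}$ and $n_2\equiv m_1+\widehat{a_4}$ must also hold for the choice of $(n_2,m_2)$ above, i.e.\ one needs $\widehat{a_3}\equiv(n_1-m_1)+a_2$ and $\widehat{a_4}\equiv(m_1-n_1)+a_1\pmod k$. Applying the same row-extraction strategy to the $\widehat{\chi_3}\otimes 1$ and $\widehat{\chi_4}\otimes 1$ rows of the local $1$-cycle equation, which involve $\widehat{p_2}+\widehat{q_1}$ and $\widehat{p_1}+\widehat{q_2}$ thanks to the distinctive swapped entries in rows $3$--$4$ of $M^{(1)}_{2,m}$ versus $M^{(2)}_{2,m}$, produces $\widehat{a_3}+\widehat{a_4}\equiv a_1+a_2\pmod k$. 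The delicate point is to upgrade this \emph{sum} to the required \emph{individual} identities; the plan is to exploit the two blocks of $M_{m,m}=(\operatorname{Id};\operatorname{Id})$, which force consistent hat relations across both copies of $R_{\mathbb C}(D_m)$ inside $R_{\mathbb C}(\Delta(2,2,m))=R_{\mathbb C}(C_2\times D_m)$, and to combine these with the analogous rows $c(D_m)+3$ and $c(D_m)+4$ of the local boundary matrix to pin down each hat congruence separately. Once this is done, the chosen $\eta_2=(n_2,m_2)$ simultaneously solves all the congruences, so $\alpha+\partial_2(\eta_1+\eta_2)$ has coefficient along $e$ congruent to zero modulo $k$, establishing the lemma.
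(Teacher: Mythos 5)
Your reduction of the lemma to a linear system in $(n_2,m_2)$ is set up correctly, and everything through the $m$ odd case is sound: the choices $n_2\equiv n_1+a_1$, $m_2\equiv m_1+a_2$ kill the $\chi_1,\chi_2$ coordinates, and the identity $b_p\equiv a_1+a_2\pmod k$ really does follow from the local $1$-cycle condition at $v$ by the row manipulation you describe, so the $\phi_p$ coordinates vanish too. This is essentially the paper's mechanism in a different order: the paper instead sets $\eta_2=(a_1+n_1,\,b_1+m_1-a_1)$, which makes the $\chi_1$ and $\phi_1$ coordinates of $\alpha+\partial_2(\eta_1+\eta_2)$ vanish integrally, and then runs the local cycle relations on the \emph{modified} chain to clear the remaining coordinates mod $k$; for $m$ odd the two arguments are interchangeable.

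The gap is exactly where you flagged it, and the route you propose cannot close it. The two hat conditions you need are $\widehat{a_3}\equiv(m_1-n_1)+a_2$ and $\widehat{a_4}\equiv(n_1-m_1)+a_1\pmod k$; these involve $n_1-m_1$, which is part of the \emph{given, arbitrary} datum $\eta_1$ and is invisible to every relation extractable from $\partial_1\alpha\equiv 0$. No combination of rows of the local boundary matrix -- including the $\rho_2\otimes\widehat{\chi_3}$, $\rho_2\otimes\widehat{\chi_4}$ rows and the second block of $M_{m,m}$ -- produces a congruence mentioning $n_1-m_1$, so the sum $\widehat{a_3}+\widehat{a_4}\equiv a_1+a_2$ is the most you will ever get. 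Concretely, take $\alpha=0$ (every $a_\ell$, $b_p$ is zero and every cycle relation is vacuous) and $\eta_1=(1,0)$: the contribution along $e$ is $(1-n_2,\,-m_2,\,\widehat{1-m_2},\,\widehat{-n_2},\,S,\dots,S)$, and killing the first and fourth coordinates forces $n_2\equiv 1$ and $n_2\equiv 0$ simultaneously, so for $m$ even and $k\ge 2$ \emph{no} admissible $\eta_2$ exists. In other words, you have run into an obstruction to the statement itself in the even case, not merely to your method. Be aware that the paper's own proof is thin at the same spot: after forcing $a'=r_1'=0$, the first five rows of $\bigl(M^{(1)<}_{2,m}\,\big|\,-M^{(2)<}_{2,m}\,\big|\,-M_{m,m}\bigr)$ yield only $\hat c'\equiv x_2-x_1\equiv-\hat d'$, i.e.\ the residual normal form $(0,0,\hat z,-\hat z,0,\dots,0)$ that reappears in Proposition~\ref{coefficients-along-B} and Lemma~\ref{red-edge-starting-in-middle-of-blue-path}, not $\hat c'\equiv\hat d'\equiv 0$ as asserted. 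A correct version of the lemma for $m$ even can only claim this $(0,0,\hat z,-\hat z,0,\dots,0)$ form, or must add the compatibility hypothesis $n_1-m_1\equiv\widehat{a_4}-a_1\pmod k$ on $\eta_1$.
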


\begin{figure}[!ht]
\begin{tikzpicture}[line cap=round,line join=round,>=triangle 45,x=1cm,y=1cm, scale=0.9]\clip(-0.5427108057048596,-1.2342048479497174) rectangle (3.360627400533817,3.417991112626764);\draw [line width=1.6pt] (1,0)-- (1,2);\draw [line width=1.6pt] (1,2)-- (2.32,3.28);\draw [line width=1.6pt] (1,2)-- (-0.24,3.3);\draw [line width=1.6pt] (1,0)-- (2.3,-1.3);\draw [line width=1.6pt] (1,0)-- (-0.24,-1.3);\draw [->,line width=1.2pt] (1,0) -- (1,0.8495988816169101);\draw [->,line width=1.2pt] (2.3,-1.3) -- (1.65,-0.65);\draw [->,line width=1.2pt] (1,0) -- (0.2964778782996653,-0.7375635146858347);\draw [->,line width=1.2pt] (2.32,3.28) -- (1.4966398485565546,2.481590156176053);\draw [->,line width=1.2pt] (1,2) -- (0.4521502044863054,2.574358656586938);\draw [shift={(1.942990610014141,0.9220136164803816)},line width=1.6pt]  plot[domain=-0.2634370941166182:2.878155559473175,variable=\t]({1*0.3689739857759035*cos(\t r)+0*0.3689739857759035*sin(\t r)},{0*0.3689739857759035*cos(\t r)+1*0.3689739857759035*sin(\t r)});\draw [line width=2.4pt] (2.2513125684202007,0.8962191502826906)-- (2.299235237374579,0.825932569149603);\draw [line width=2.4pt] (2.299235237374579,0.825932569149603)-- (2.382301196895501,0.8866346164918151);\draw [shift={(0.06552058425463358,0.9219547833697883)},line width=1.6pt]  plot[domain=-0.2634370941166182:2.8781555594731745,variable=\t]({1*0.3689739857759036*cos(\t r)+0*0.3689739857759036*sin(\t r)},{0*0.3689739857759036*cos(\t r)+1*0.3689739857759036*sin(\t r)});\draw [line width=2.4pt] (0.37384254266069294,0.8961603171720973)-- (0.42176521161507174,0.8258737360390096);\draw [line width=2.4pt] (0.42176521161507174,0.8258737360390096)-- (0.5048311711359934,0.8865757833812218);\begin{normalsize}\draw [fill=black] (1,0) circle (0.5pt);

\draw [fill=black] (1,2) circle (0.5pt);\draw[color=black] (1.127308257066187,1.0819525426791718) node {$e$};

\draw [fill=black] (-0.24,3.3) circle (0.5pt);\draw[color=black] (0.497737578640594,2.82) node {$\bar e_1$};

\draw [fill=black] (2.32,3.28) circle (0.5pt);\draw[color=black] (1.3779483216234536,2.82) node {$\bar e_2$};

\draw [fill=black,shift={(1,0.8495988816169101)},rotate=270] (0,0) ++(0 pt,0.75pt) -- ++(0.649519052838329pt,-1.125pt)--++(-1.299038105676658pt,0 pt) -- ++(0.649519052838329pt,1.125pt);\draw [fill=black] (1.65,-0.65) circle (0.5pt);

\draw [fill=black] (2.3,-1.3) circle (0.5pt);\draw [fill=black] (-0.24,-1.3) circle (0.5pt);\draw[color=black] (0.1453152038638715,-0.48) node {$\bar e_3$};

\draw[color=black] (1.75687893549178,-0.48) node {$\bar e_4$};

\draw [fill=black] (0.2964778782996653,-0.7375635146858347) circle (0.5pt);\draw [fill=black] (1.4966398485565546,2.481590156176053) circle (0.5pt);\draw [fill=black] (0.4521502044863054,2.574358656586938) circle (0.5pt);\draw [fill=black] (1.5867459826537031,1.01809466381116) circle (0.5pt);\draw [fill=black] (2.299235237374579,0.825932569149603) circle (0.5pt);

\draw[color=black] (1.9788854378839629,0.50) node {$F_{i+1}$};\draw [fill=black] (2.2513125684202007,0.8962191502826906) circle (0.5pt);

\draw [fill=black] (2.382301196895501,0.8866346164918151) circle (0.5pt);\draw [fill=black] (-0.2907240431058046,1.018035830700567) circle (0.5pt);\draw [fill=black] (0.42176521161507174,0.8258737360390096) circle (0.5pt);

\draw[color=black] (0.07360575317493133,0.50) node {$F_j$};

\draw [fill=black] (0.37384254266069294,0.8961603171720973) circle (0.5pt);\draw [fill=black] (0.5048311711359934,0.8865757833812218) circle (0.5pt);\end{normalsize}\end{tikzpicture}
\caption{Local picture near the edge $e$ (Lemma~\ref{choosing-the-etas}).}\label{fig:localpicture}
\end{figure}

\begin{proof}
The edge $e$ has stabilizer $D_m$, with $m>2$. We will assume the orientations along the edges and faces are
as given in Figure~\ref{fig:localpicture}.
Assume the coefficients of $\alpha$ supported on the edge $e$ are given by $(a, b, \hat c, \hat d, r_1, \ldots ,r_k)$. 
We choose $\eta_2:=(a+n_1, r_1+m_1-a)$. 
A straightforward computation using the induction formulas shows that, with this choice of $\eta_2$, the coefficient of 
$\alpha':=\alpha + \partial_2(\eta_1+\eta_2)$ along the edge $e$ is of the form $ z:=(0, b', \hat c', \hat d', 0, r_2', \ldots , r_k')$. That is
to say, we chose $\eta_2$ in order to force $a'=r_1'=0$. 

We are left with checking that the remaining coefficients of $\alpha'$ 
are all congruent to zero mod $k$. To see this, we use the fact that $\alpha'$ is also a $1$-cycle mod $k$. From the labelings of
the faces around the vertex $v$, and the order in which we label faces (one region at a time), we see that we are in one of the
two cases $i<j<k$ or $j<k<i$. Let us consider the case $i<j<k$, and assume that the coefficients along the $2$-edges incident to $v$
are given by $ x := (x_1, x_2, x_3, x_4)$ and $ y:=(y_1, y_2, y_3, y_4)$. As $\alpha'$ is a $1$-cycle mod $k$, we have
\[
	\begin{pmatrix} M_{2,m}^{(1)<}\,\Big|\,  -M_{2,m}^{(2)<}\,\Big|\, - M_{m,m} \end{pmatrix} \begin{pmatrix}  x\\  y\\ \  z \end{pmatrix} \equiv \mathbf{0},
\]
Note that the last $k$ rows of the matrix are identical, giving rise to $k$ identical relations $x_2 + x_4 - y_2 - y_4 +r_i' \equiv 0$ (for
$1\leq i \leq k$). Since $r_1' = 0$, these equations immediately imply that all the remaining $r_i' \equiv 0$. 

Let us now assume that $m$ is odd. The first
and third row of the matrix give rise to equations $x_1 - y_1 \equiv 0$ and $x_1 + x_3 - y_1 -y_3 \equiv 0$, forcing $x_3 - y_3 \equiv 0$.
Using the second row, we get $x_3 - y_3 + b' \equiv 0$, which immediately gives $b' \equiv 0$. This completes the proof when 
$i<j<k$ and $m$ is odd. The case where $m$ is even is analogous -- one just uses the equations obtained from the first five rows of the matrix to 
conclude that $a', b', \hat c'$, and $\hat d'$ are all congruent to zero mod $k$. 

Finally, if $j<k<i$, then one proceeds in a completely similar manner, but using the block matrix $\left( M_{2,m}^{(1)>}\, |\,  -M_{2,m}^{(2)>}\,|\, - M_{m,m} \right)$ instead. It is again straightforward to work through the equations -- we leave the details to the reader.
\end{proof}

\begin{lemma}\label{bad-red-edge-propagating}
Consider a vertex of type $\Delta(2,2,m)$, $m>2$, with the incident $2$-edges oriented compatibly. If the coefficients of $\alpha$ along the $m$-edge are congruent to $(0,0, \hat z, -\hat z, 0, \ldots , 0)$ for some $z$, then the coefficients $(a, b, c, d)$ and $(a', b', c', d')$ along the pair of $2$-edges satisfy the following congruences:
\begin{enumerate}[(i)]
\item if $i<j<k$, then $(a, b, c, d) \equiv (a', b' , c', d')$, and $\hat z \equiv b-a \equiv d-c$;
\item if $j<i<k$, then $(a, b, c, d) \equiv (a', c', b', d')$, and $\hat z \equiv c-a \equiv d-b$;
\item if $j<k<i$, then $(a, b, c, d) \equiv (a', b' , c', d')$, and $\hat z \equiv c-a \equiv d-b$;
\end{enumerate}
and we have oriented the $m$-edge so that the vertex is its source. (With the opposite orientation, simply replace $\hat z$ by $-\hat z$.)
Moreover, the same statement holds if one changes all congruences to equalities.
\end{lemma}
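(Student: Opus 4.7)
The plan is to read the lemma directly off the explicit induction matrices at a $\Delta(2,2,m)$-vertex written down just above the statement. Let $v=v_{ijk}$, and denote by $x=(a,b,c,d)^{T}$, $y=(a',b',c',d')^{T}$ the coefficients of $\alpha$ along the two incident $2$-edges and by $w=(0,0,\hat z,-\hat z,0,\ldots,0)^{T}$ the prescribed coefficient along the $m$-edge. With the orientation conventions of the statement, the local $1$-cycle condition $\partial_1(\alpha)\equiv 0$ at $v$ becomes the single block matrix congruence
\[
\bigl(M_1 \,\bigm|\, -M_2 \,\bigm|\, -M_{m,m}\bigr)
\begin{pmatrix} x \\ y \\ w \end{pmatrix} \equiv 0 \pmod{k},
\]
where the pair $(M_1,M_2)$ equals $(M_{2,m}^{(1)<},M_{2,m}^{(2)<})$, $(M_{2,m}^{(1)>},M_{2,m}^{(2)<})$, or $(M_{2,m}^{(1)>},M_{2,m}^{(2)>})$ according to whether we are in case $(i)$, $(ii)$, or $(iii)$.

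Next I would simply read the equation row by row. Consider $(i)$ first: the $\chi_1,\chi_2$ rows of the first sub-block immediately yield $a\equiv a'$ and $b\equiv b'$, and the same rows of the second sub-block give $c\equiv c'$ and $d\equiv d'$. The hatted rows $\widehat{\chi_3},\widehat{\chi_4}$ (present only when $m$ is even) read, after substitution,
\[
b - a' - \hat z\equiv 0, \qquad a - b' + \hat z\equiv 0,
\]
which, combined with the previous two congruences, give $\hat z\equiv b-a$. The analogous hatted rows in the second sub-block produce $\hat z\equiv d-c$ in exactly the same way. The remaining $\phi_p$ rows simply reproduce $a+b\equiv a'+b'$ and $c+d\equiv c'+d'$, which are automatic consequences of the previous congruences. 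Cases $(ii)$ and $(iii)$ are handled identically: replacing $M_{2,m}^{(?)<}$ by its ``$>$'' variant interchanges the 2nd and 3rd columns of the corresponding block, which reshuffles the $\chi_1,\chi_2$ equations into the pairing $(a,b,c,d)\equiv(a',c',b',d')$ of case $(ii)$ (respectively the pattern of case $(iii)$) and, together with the hatted rows, produces the stated relations for $\hat z$.

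Finally, the ``moreover'' clause is immediate: if $\alpha$ is an actual integral $1$-cycle, then the block equation above holds over $\mathbb{Z}$ rather than over $\mathbb{Z}/k\mathbb{Z}$, so every congruence in the row-by-row argument becomes an equality of integers. The main obstacle I anticipate is purely clerical: one must keep careful track of which of the four variants $M_{2,m}^{(?)?}$ occupies each block, and of the signs forced by ``$2$-edges oriented compatibly'' and ``$m$-edge oriented with $v$ as source''. Once these conventions are pinned down, every case reduces to reading off four (or two, for odd $m$) rows of an explicit integer matrix, and the conclusion of the lemma falls out with no further input beyond the tables of Appendix~\ref{AppendixB}.
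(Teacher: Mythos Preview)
Your proposal is correct and follows essentially the same approach as the paper: both set up the block matrix congruence $\bigl(M_1\mid -M_2\mid -M_{m,m}\bigr)\cdot(x,y,w)^T\equiv 0$ with the appropriate variant of the induction matrices in each of the three orderings, read off the resulting row equations to obtain the stated congruences, and observe that the integral case follows by dropping ``mod $k$''. The only cosmetic difference is that the paper writes out the full list of ten congruences in case~(i) explicitly, whereas you group them by sub-block; the content is identical.
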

\begin{proof}
Let $x_1=(a, b, c, d)$, $x_2=(a', b', c', d')$ and $x_3=(0,0, \hat z, -\hat z, 0, \ldots , 0)$ be the coefficients of $\alpha$ along the edges incident to the vertex. Consider the case $i<j<k$ first. Since $\alpha$ is a 1-cycle mod $k$,
\[
	\begin{pmatrix} M_{2,m}^{(1)<}\,\Big|\,  -M_{2,m}^{(2)<}\,\Big|\, - M_{m,m} \end{pmatrix} \begin{pmatrix} x_1\\ x_2\\ \ x_3 \end{pmatrix} \equiv \mathbf{0},
\]
which gives $a - a' \equiv 0$, $b - b' \equiv 0$, $b - a' - \hat{z} \equiv 0$, $a - b' + \hat z \equiv 0$, $a + b - a' - b' \equiv 0$, 
$c - c' \equiv 0$, $d - d' \equiv 0$, $d - c' - \hat{z} \equiv 0$, $c - d' + \hat z \equiv 0$, and $c + d - c' - d' \equiv 0$, from which the result follows. The other two cases, $j<i<k$ and $j<k<i$, are analogous but for the block matrix $\left( M_{2,m}^{(1)>}\, |\,  -M_{2,m}^{(2)<}\,|\, - M_{m,m} \right)$, respectively $\left( M_{2,m}^{(1)>}\, |\,  -M_{2,m}^{(2)>}\,|\, - M_{m,m} \right)$. The former gives the same congruences but with $b$ and $c$ interchanged, and the latter with $b$ and $c$, and $b'$ and $c'$, interchanged. For the opposite orientation of the $m$-edge, replace $- M_{m,m} $ by $M_{m,m}$ in the calculation above.  
\end{proof}


\begin{lemma}\label{red-edge-starting-in-middle-of-blue-path}
Consider a vertex of type $\Delta(2,2,m)$, $m>2$, with the incident $2$-edges oriented compatibly. Assume the coefficients along the $2$-edges are both congruent to $(a, b, c, d)$, that the $m$-edge is oriented compatibly with the first 2-edge, and that the faces are labelled so that
$i<j<k$ or $j<k<i$ (so we are excluding the case $j<i<k$). Then 
the $m$-edge coefficients are congruent to $(0, 0, \hat z, - \hat z, 0, \ldots , 0)$ where 
\begin{enumerate}[(i)]
\item if $i<j<k$, then $\hat z \equiv a - b$;
\item if $j<k<i$, then $\hat z \equiv a - c$.
\end{enumerate}
(If we reverse the orientation on the $m$-edge, the congruencies above hold with $\hat z$ replaced by $-\hat z$.) In particular, if $m$ is odd, the $m$-edge is automatically 
$k$-divisible.
\end{lemma}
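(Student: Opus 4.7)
The plan is to mimic the proof of Lemma~\ref{bad-red-edge-propagating}, but reversing the roles of input and output: now the coefficients $x_3$ along the $m$-edge are the unknown and the common value $x_1 = x_2 = (a,b,c,d)$ along the two $2$-edges is prescribed. The local cycle condition $\partial_1(\alpha) \equiv 0 \pmod k$ at the vertex becomes a matrix equation
\[
	\bigl( \pm M_{2,m}^{(1)\sigma} \,\big|\, \pm M_{2,m}^{(2)\tau} \,\big|\, \pm M_{m,m} \bigr) \begin{pmatrix} x_1 \\ x_2 \\ x_3 \end{pmatrix} \equiv \mathbf{0} \pmod k,
\]
where the block sign pattern is dictated by the orientation hypothesis and the superscripts by the index ordering: $(\sigma,\tau) = (<,<)$ in case (i) and $(>,>)$ in case (ii); the excluded case $j<i<k$ would force the mismatched pair $(\sigma,\tau) = (>,<)$. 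Since $x_1 = x_2$, the first two blocks collapse to a single term of the shape $\bigl(\pm M_{2,m}^{(1)\sigma} \pm M_{2,m}^{(2)\tau}\bigr)(a,b,c,d)^\top$, and solving against $M_{m,m} x_3$ determines $x_3$ mod $k$.

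First I would compute this difference row by row in case (i). Comparing the explicit matrices $M_{2,m}^{(1)<}$ and $M_{2,m}^{(2)<}$ displayed above, they agree on every row \emph{except} the four hatted ones; on those rows the discrepancy produces $(b-a,\, a-b)$ in the first hatted pair and $(d-c,\, c-d)$ in the second, while the remaining rows contribute zero. Since $M_{m,m}$ is a doubled identity that stacks $x_3$ twice on top of itself, reading off the two halves yields (up to the global sign fixed by the orientation convention) $x_3 \equiv (0, 0, \hat z, -\hat z, 0, \ldots, 0)$ with $\hat z \equiv a-b \pmod k$, together with the implicit consistency relation $a - b \equiv c - d \pmod k$ that falls out of equating the two copies of $x_3$. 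Case (ii) is handled identically with $M_{2,m}^{(1)>}$ and $M_{2,m}^{(2)>}$ in place of the $<$-versions; the same calculation relocates the nontrivial entries to the other hatted positions and produces $\hat z \equiv a - c \pmod k$. Reversing the orientation of the $m$-edge flips the sign on the $M_{m,m}$ block and hence negates $\hat z$, as claimed.

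For the final assertion, when $m$ is odd the representation ring $R_\mathbb{C}(D_m)$ contains no hatted characters at all, so $x_3$ has no coordinates available to absorb the $\hat z, -\hat z$ values; the very same computation then forces $x_3 \equiv \mathbf{0} \pmod k$, so the $m$-edge is automatically $k$-divisible. The main obstacle will be careful sign bookkeeping: one must reconcile the orientation conventions here with those in Lemma~\ref{bad-red-edge-propagating} (which produces $\hat z \equiv b-a$ under its own conventions), and one must verify that in the excluded case $j<i<k$ the mismatched pair $M_{2,m}^{(1)>}$ and $M_{2,m}^{(2)<}$ has a difference with nonzero entries outside the hatted rows, which would obstruct any clean conclusion of the form $(0, 0, \hat z, -\hat z, 0, \ldots, 0)$ for $x_3$.
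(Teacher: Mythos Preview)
Your proposal is correct and follows essentially the same approach as the paper: set up the local cycle equation with the appropriate block matrix $\bigl(M_{2,m}^{(1)\sigma}\,|\,-M_{2,m}^{(2)\tau}\,|\,-M_{m,m}\bigr)$, observe that for matched superscripts the two $D_2$-blocks agree on all rows except the hatted ones, and read off the coefficients of $x_3$ row by row. Your framing via the block difference $M_{2,m}^{(1)}-M_{2,m}^{(2)}$ is a minor repackaging of the paper's row-by-row computation, and your remarks on the consistency relation $a-b\equiv c-d$ and on why the mismatched case $j<i<k$ fails are correct elaborations that the paper leaves implicit.
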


\begin{proof}
We are assuming $x_1\equiv x_2 \equiv (a, b, c, d)$, and that $x_3=(x,y, \hat z, \hat t, r_1, \ldots , r_k)$ are the coefficients of $\alpha$ along the edges incident to the vertex. Consider the case $i<j<k$ first. Since $\alpha$ is a 1-cycle mod $k$,
\[
\begin{pmatrix} M_{2,m}^{(1)<}\,\Big|\,  -M_{2,m}^{(2)<}\,\Big|\, - M_{m,m} \end{pmatrix} \begin{pmatrix} x_1\\ x_2\\ x_3 \end{pmatrix} 
\equiv \mathbf{0},
\]
which gives $x \equiv a - a \equiv 0$, $y\equiv b - b \equiv 0$, $\hat c \equiv a - b$, $\hat d \equiv b - a$, while all the remaining equations
are of the form $r_i \equiv (a+ b) - (a+ b) \equiv 0$. The claim follows. The case $j<k<i$ is completely analogous, but uses instead
the block matrix $\left( M_{2,m}^{(1)>}\, |\,  -M_{2,m}^{(2)>}\,|\, - M_{m,m} \right)$. The details are left to the reader.
\end{proof}

It is perhaps worth noting that the analogue of Lemma~\ref{red-edge-starting-in-middle-of-blue-path} is {\it false} if the faces are enumerated
to satisfy $j<i<k$. In particular, the corresponding block matrix $\left( M_{2,m}^{(1)>}\, |\,  -M_{2,m}^{(2)<}\,|\, - M_{m,m} \right)$ leads
to, for example, $y \equiv b-c$, which is not necessarily zero. 


\section{No torsion in $H_0$ -- the linear algebra approach}\label{sec:H0-algebra}
In this section, we give a proof of Theorem~\ref{thm:H0} inspired by the representation ring splitting technique of~\cite{Rahm16}. 
We do this by establishing a criterion for $\hzero$ to be torsion-free. Our criterion is efficient to check, and only requires elementary linear algebra.
Furthermore, we will see it is satisfied for $\Gamma$ a $3$-dimensional hyperbolic Coxeter group. 

The verification of our criterion relies on simultaneous base transformations of the representation rings, 
bringing the induction homomorphisms into the desired form. For the $3$-dimensional hyperbolic Coxeter groups,
these transformations are carried out in Appendices~\ref{AppendixA} and~\ref{AppendixB}.  In Section \ref{sec:further-examples} which comes next,
we will also see that this condition is satisfied for several additional classes of groups that had previously been considered by other authors.

\begin{definition}\label{def:vertexblock}
 The \emph{vertex block} of a given vertex $v$ in a Bredon chain complex differential matrix $\partial_1$ consists of all the blocks of $\partial_1$ 
 that are representing maps induced (on complex representation rings from $\Gamma_e \to \Gamma_v$) by edges $e$ incident to $v$.
\end{definition}

We represent elements 
in the Bredon chain complex as column vectors. So the matrix $D$ for the differential $\partial _1$ is a $\rank \mathcal{C}_0 \times \rank \mathcal{C}_1$
matrix, acting by left multiplication on a column vector in $\mathcal C_1$.
For a vertex $v$, denote by $n_0$ the rank of $R_\mathbb{C} \left(  \Gamma_{v} \right)$, and by  $n_1$, $n_2$, $n_3$, the
ranks of the representation rings corresponding to the three edges $e_1$, $e_2$ and $e_3$ incident to $v$.  Then the vertex
block for $v$ is a submatrix of $D$ of size $n_0 \times (n_1+n_2+n_3)$.
Since vertex blocks have been constructed to contain all entries from incident edges, 
we note that the rest of the entries in their rows are zero. 

\begin{thm}\label{thm:basetransf}
 If there exists a base transformation such that all minors in all vertex blocks are in the set $\{-1, 0, 1\}$,
 then $\hzero$ is torsion-free.
\end{thm}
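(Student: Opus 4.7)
The plan is to prove that $H_0 = \mathcal{C}_0/\operatorname{im}(\partial_1)$ is torsion-free by showing $\operatorname{im}(\partial_1)$ is a pure (saturated) subgroup of $\mathcal{C}_0$; equivalently, that every nonzero invariant factor of the integer matrix $\partial_1$ equals $1$. Since the contemplated base changes act by unimodular transformations on each $R_{\mathbb{C}}(\Gamma_e)$ and $R_{\mathbb{C}}(\Gamma_v)$ separately, and such changes preserve the Smith normal form of $\partial_1$, we may assume throughout that the bases are fixed so that every vertex block $B_v$ is totally unimodular (TU). In particular $\operatorname{coker}(B_v)$ is torsion-free for each $v$, and $\operatorname{im}(B_v)$ is pure inside $R_{\mathbb{C}}(\Gamma_v)$.

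Fix a prime $p$. I would then establish the following saturation statement, one prime at a time: if $\partial_1(y) \in p\,\mathcal{C}_0$ for some $y \in \mathcal{C}_1$, there exists $\tilde{y} \in \mathcal{C}_1$ with $\tilde{y} \equiv y \pmod{p\,\mathcal{C}_1}$ and $\partial_1(\tilde{y}) = 0$. Given such a $\tilde y$, setting $v := \partial_1(y)/p$ yields $v = \partial_1\!\big((y - \tilde{y})/p\big) \in \operatorname{im}(\partial_1)$, ruling out any $p$-torsion class in $H_0$. Since this must hold for every prime, torsion-freeness of $H_0$ will follow.

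Locally at each vertex $w$, writing $y|_w$ for the projection of $y$ to coordinates indexed by edges incident to $w$, the hypothesis reads $B_w\, y|_w \equiv 0 \pmod{p}$. Total unimodularity of $B_w$ implies the natural map $\ker(B_w) \otimes \mathbb{F}_p \to \ker(B_w \bmod p)$ is an isomorphism, so $y|_w \bmod p$ admits an integral lift $z_w \in \ker(B_w)$ with $z_w \equiv y|_w \pmod{p}$. This is the local construction, and uses the TU hypothesis on $B_v$ in exactly the form needed.

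The main obstacle is gluing the local choices $\{z_w\}_{w}$ into a single globally consistent $\tilde{y} \in \mathcal{C}_1$: for each edge $e$ with endpoints $w_1, w_2$, the two prescriptions $z_{w_1}|_e$ and $z_{w_2}|_e$ agree modulo $p$ but may differ by a nonzero multiple of $p$. I plan to exploit the freedom that $z_w$ is only determined up to $\ker(B_w)$: enumerate vertices in a suitable order, and at each step modify the current $z_w$ by an element of $\ker(B_w)$ so that it matches the values already committed on shared edges. Verifying that such adjustments are always available is the crux of the argument, and it is here that the full strength of the TU hypothesis (and not merely saturation of each $\operatorname{im}(B_v)$) will be needed, together with some acyclicity/tree-like structure on the dependency pattern among vertex blocks, analogous to the role played by the graph $B$ in Section~\ref{sec:H0-geom}.
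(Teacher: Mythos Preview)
Your local step is correct: total unimodularity of $B_v$ ensures that $\ker(B_v)\otimes\mathbb{F}_p \to \ker(B_v \bmod p)$ is onto, so the local lifts $z_w$ exist. The gap is exactly where you place it, and it is genuine, not a routine detail. The $1$-skeleton of $\mathcal{P}$ is the edge graph of a $3$-polytope and is full of cycles; there is no ambient tree to induct along. Making local lifts agree on shared edges is precisely the program of Section~\ref{sec:H0-geom}, which carries it out only under the restriction that every vertex stabilizer is $D_m\times\mathbb{Z}_2$ with $m\ge 3$, and even then requires the lengthy combinatorics of Sections~\ref{color-skeleton}--\ref{fix-remaining-blue-edges}. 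Remark~\ref{rem:222} records that the key propagation lemma (Lemma~\ref{bad-red-edge-propagating}) fails for the other vertex types, so the argument does not extend. You have correctly located the hard step but not done it; as written, this is a plan rather than a proof, and the step you defer is the one the paper could not complete by this route.

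The paper's proof of Theorem~\ref{thm:basetransf} avoids gluing entirely by working with determinant divisors. To show every elementary divisor of $\partial_1$ is $\pm 1$, it suffices to exhibit, for each $i$ up to the rank of $\partial_1$, an $i\times i$ minor equal to $\pm 1$. The induction on $i$ uses one structural fact: the rows of $\partial_1$ belonging to a vertex block $V$ vanish on columns outside $V$. Hence, relative to any fixed $V$, a square submatrix of $\partial_1$ is (after permuting rows) block upper-triangular,
\[
\begin{pmatrix} M & 0 \\ * & B^{\circ} \end{pmatrix},
\]
with $M$ a minor of $V$ and $B^{\circ}$ disjoint from $V$, so its determinant factors as $\det(M)\det(B^{\circ})$. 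Given an $(i-1)\times(i-1)$ submatrix $B$ with $\det B = \pm 1$, this factorisation forces $\det B^{\circ}=\pm 1$; one then enlarges $M$ to $M'$ inside $V$ to obtain an $i\times i$ block $B'$. The TU hypothesis gives $\det M'\in\{-1,0,1\}$, and for a suitable choice of $V$ one has $\det B'\neq 0$ (possible since $i$ does not exceed the rank), hence $\det B'=\pm 1$. This is a statement about minors of a single fixed matrix; no compatibility between vertices ever enters.
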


\begin{proof}[Proof of Theorem~\ref{thm:basetransf}]
We start by recalling a general result on Smith Normal Forms, already observed by Smith~\cite{SmithNF}.
Denote by $d_i(A)$ the $i$-th determinant divisor $d_{i}(A)$, defined to be 
the greatest common divisor of all $i\times i$ minors of a matrix $A$ when $i\geq 1$, and to be $d_{0}(A):=1$ when $i=0$. 
Then the elementary divisors of the matrix~$A$, up to multiplication by a unit, coincide with the ratios
$\alpha_i = \frac{d_i(A)}{d_{i-1}(A)}$.

Let us use the notation
$$ \prerank := \rank \mathcal{C}_1 -\rank \ker \partial_1,$$
where $\mathcal{C}_1$ is the module of $1$-chains in the Bredon chain complex (Equation~(\ref{BredonChainComplex})).
Observe that, if $A$ is {\it any} $i \times i$ submatrix of $D$ of non-zero determinant, and $i<\prerank$, then $A$ can be expanded to some $(i+1)\times
(i+1)$ submatrix of non-zero determinant.

Now $\hzero$ is torsion-free if and only if $\alpha_i= \pm1$ for all $1 \le i \le \prerank$. From the discussion above,
it is sufficient to find, for each $1 \le i \le \prerank$, an $i\times i$ minor in the Bredon chain complex differential 
matrix $\partial_1$ with determinant $\pm 1$. We produce such a minor by induction on $i$. 

\vskip 10pt

\noindent \underline{Base Case.}
For $i = 1$, we observe there are vertices with adjacent edges, hence there are non-zero vertex blocks.
As by assumption all the entries in the vertex blocks are in the set $\{-1, 0, 1\}$, there exists an entry of value $\pm 1$.

\vskip 5pt

\noindent \underline{Inductive step.}
Let $2 \leq i \leq \prerank$, and assume we already have an $(i-1) \times (i-1)$ minor of $\partial_1$ of value $\pm 1$, 
corresponding to a submatrix $B$. We want to find an 
$i \times i$ minor of $\partial_1$ of value $\pm 1$. 

Given any vertex block $V$, choose a maximal square submatrix $B^\circ$ of $B$ which is
disjoint from the rows and columns of $V$. At the two extremes, this submatrix could be empty (if $B$ is contained in $V$) or could coincide with $B$ 
(if $B$ is completely disjoint from $V$). 
Note that, after possibly permuting rows, we get a square sub-block $M$ of $V$ such that the submatrix $B$ takes the form
 $$B = \det \begin{pmatrix}                    M & 0 \\
                                                 * & B^\circ \\
                                                \end{pmatrix}.$$
Then in particular $\pm 1 = \det(B) = \det(M) \cdot \det(B^\circ)$, which forces $\det(B^\circ) = \pm 1$.
One can then consider extending $B^\circ$ to an $i\times i$ block $B'$ 
by picking a submatrix $M'$ inside $V$ of size $i - \text{size}(B^\circ)$. Such an extension might 
not be possible, but when it is,
the resulting block $B'$ takes the form (possibly after permuting rows)
 $$B' = \det \begin{pmatrix}                    M' & 0 \\
                                                 * & B^\circ \\
                                                \end{pmatrix}.$$
Consider the collection of all $i\times i$ blocks obtained in this manner, 
and note that for any such block, we have $\det(B') = \det(M') \cdot \det(B^\circ) = \pm 1$. 

Since $i \leq \prerank$, there exists a vertex block $V$ for which this construction yields an $i \times i$-block with $\det(B')\neq 0$.
We have that $M'$ is a minor in the vertex block $V$, so by hypothesis $\det(M') \in \{-1, 0, 1\}$. Since $\det(B')\neq 0$, we conclude 
that $\det(M') = \pm 1$. And as we already noted above, the submatrix $B^\circ$ of $B'$ satisfies $\det(B^\circ) \in \{-1, 1\}$. This implies 
our submatrix $B'$ satisfies $\det(B')=\det(M')\cdot \det(B^\circ)=\pm1$, which completes the inductive step and hence the proof of the theorem. 

\end{proof}

Our proof of Theorem~\ref{thm:H0} now reduces to verifying the hypotheses of Theorem~\ref{thm:basetransf}, when $\Gamma$
is a $3$-dimensional hyperbolic reflection group. We will rely on the simultaneous base transformations that can be found 
in Appendix~\ref{AppendixA}. 

\begin{prop}
 For a system of finite subgroups of types $A_5 \times C_2$, $S_4$, $S_4 \times C_2$, $\Delta(2,2,2) = (C_2)^3$ and
 $\Delta(2,2,m) = C_2 \times D_m$ for $m \geq 3$ as vertex stabilizers,
 with their three 2-generator Coxeter subgroups as adjacent edge stabilizers,
 there is a simultaneous base transformation such that all vertex blocks have all their minors contained in the set $\{-1, 0, 1\}$.
\end{prop}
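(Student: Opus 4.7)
The plan is a case-by-case verification, one vertex stabilizer type at a time. For each group $G_v$ in the list, together with its three 2-generator Coxeter subgroups $H_1, H_2, H_3$, I need to exhibit integral base changes in $R_\mathbb{C}(G_v)$ and in each $R_\mathbb{C}(H_i)$ such that the assembled vertex block -- a matrix of size $\operatorname{rk} R_\mathbb{C}(G_v) \times (\operatorname{rk} R_\mathbb{C}(H_1) + \operatorname{rk} R_\mathbb{C}(H_2) + \operatorname{rk} R_\mathbb{C}(H_3))$ -- becomes totally unimodular in the sense that every minor lies in $\{-1,0,1\}$. The raw matrices on which these transformations operate are exactly the induction homomorphisms produced in Appendix~\ref{AppendixB} from the character tables of Appendix~\ref{AppendixA}.

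Four of the five vertex stabilizer types -- $(C_2)^3$, $S_4$, $S_4\times C_2$, and $A_5\times C_2$ -- have fixed, small rank, so the verification for each reduces to a finite computation. For each such $G_v$ I would exhibit specific new bases (these are recorded in Appendix~\ref{AppendixA}), assemble the transformed vertex block, and inspect it directly: the bases can be chosen so that each column of the transformed matrix has very few nonzero entries, all equal to $\pm 1$, arranged so that total unimodularity follows either by direct enumeration of minors or by the standard criterion that a matrix with at most two nonzero $\pm 1$'s per column (with appropriate sign pattern) is totally unimodular.

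The principal obstacle is the infinite family $\Delta(2,2,m) = C_2\times D_m$, $m\geq 3$, because the rank of $R_\mathbb{C}(C_2\times D_m)$ grows linearly with $m$ and a base change uniform in $m$ is needed. The three edge stabilizers here are the dihedral group $D_m = \langle s_2,s_3\rangle$ and the two Klein four-groups $\langle s_1,s_2\rangle, \langle s_1,s_3\rangle$. The irreducibles of $C_2\times D_m$ split by $C_2$-parity into two copies of those of $D_m$; in the natural character basis, the $D_m$-induction is a concatenation of two identity blocks of size $c(D_m)$, whereas each Klein four-group induction has a long cluster of repeated $1$-entries coming from the two-dimensional representations $\phi_p$ of $D_m$ (see Section~\ref{subsect:partial2}). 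The uniform base change, recorded in Appendix~\ref{AppendixA}, replaces each $\phi_p\otimes \chi$ by an integral combination annihilating these repeated entries simultaneously for both Klein four-group induction matrices, while leaving the $D_m$-induction in essentially the same block-identity form. Compatibility between the two Klein four-group inductions under a single base change on $R_\mathbb{C}(C_2\times D_m)$ is arranged by exploiting the symmetry swapping $s_2$ and $s_3$.

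With these bases in place, verifying that the resulting vertex block is totally unimodular reduces to a routine row/column expansion: the $D_m$-block is a stack of two identities, and after the base change each column of the Klein four-group blocks has at most two nonzero $\pm 1$ entries, so every square submatrix either has a row or column with a single $\pm 1$ (allowing inductive expansion) or has vanishing determinant. This argument must be recorded separately in the subcases $m$ even and $m$ odd, to account for the additional characters $\widehat{\chi_3}, \widehat{\chi_4}$ of $D_m$ that appear only for even $m$, but in both subcases the combinatorics is a finite, uniform check.
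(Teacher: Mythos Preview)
Your overall strategy matches the paper's: perform simultaneous base changes (those recorded in Appendix~\ref{AppendixA}) and then verify the minor condition vertex-type by vertex-type, treating the infinite family $\Delta(2,2,m)$ by a uniform argument and the four sporadic types by finite inspection. However, several structural claims in your sketch are inaccurate and would cause the verification to fail as written.

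First, after the base change the $D_m$-induction block into $C_2\times D_m$ is \emph{not} a stack of two identities. The paper's transformation on the $C_2$-factor replaces $\rho_2$ by $\rho_2-\rho_1$, collapsing the lower half to zero, while the $D_m$-factor transformation (Tables~\ref{MapDmtoDmxC2 m even},~\ref{MapDmtoDmxC2 m power of 2}) introduces columns with three or four nonzero entries. Consequently your proposed mechanism (``every square submatrix has a row or column with a single $\pm 1$ or vanishes'') does not apply directly; the paper instead discards rows and columns with at most one nonzero entry and then checks the residual $4\times 6$ or $5\times 9$ matrix explicitly. Second, the even case splits further: the paper needs \emph{three} subcases ($m$ odd, $m$ even not a power of $2$, $m$ a power of $2$), because the required base transformation on $D_m$ differs in the latter two. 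Third, for the four sporadic stabilizer types the paper does not argue via a two-entries-per-column criterion (which is in any case insufficient for total unimodularity without a sign condition), but simply enumerates all minors by computer; your claim that the transformed columns are sparse enough for a by-hand structural argument is not substantiated and, for $(C_2)^3$ already, some columns of the assembled vertex block carry four nonzero entries.
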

\begin{proof}
We apply the base transformation specified in Appendix~\ref{AppendixA}. 
Then we have that all of the induced maps have all of their entries in the set $\{-1, 0, 1\}$ (see Appendix~\ref{AppendixB}, 
all tables referenced in this proof can be found there). Next, for each vertex stabilizer type, 
we assemble the vertex blocks from the three vertex-edge-adjacency induced 
maps. 

Let us provide full details for the case of vertex stabilizer $\Delta(2,2,m) = C_2 \times D_m$ for $m \geq 3$.
By Tables~\ref{MapDmtoDmxC2} and~\ref{MapD2toDmxC2}, the vertex block of a stabilizer of type $C_2 \times D_m$ for $m \geq 3$ odd consists of
\begin{center}
two blocks \footnotesize $\pm \begin{pmatrix}
 1 &  0 &  0 & 0\\
 0 &  0 &  0 & 1\\
 0 &  0 &  0 & 0\\
\vdots&\vdots&\vdots&\vdots\\
 0 &  0 &  0 & 0 \\
 0 &  1 &  0 & 0\\
 0 &  0 &  1 &0\\
 0 &  0 &  0 &0\\
 \vdots&\vdots&\vdots&\vdots\\
 0 &  0 &  0 &0\\  
            \end{pmatrix}$\normalsize \ 
\ and one block \footnotesize$\pm \begin{pmatrix}
\text{identity matrix of size } \frac{m+3}{2}\\
 0 \\  
            \end{pmatrix}$.\normalsize
\end{center}
Note that all the columns in this matrix have a very special form: all but one of the entries are zero, and the single non-zero entry
is $\pm 1$. An easy induction shows that, when checking whether the minors all take value in the set $\{0, \pm 1\}$, such columns
can always be discarded (and likewise for rows). This fact is very useful for reducing the size of the matrices to check. 
For the matrix above, this fact immediately lets us conclude that all minors are in $\{0, \pm 1\}$.

For $m \geq 6$ even, but not a power of $2$, Tables~\ref{MapDmtoDmxC2 m even} and~\ref{MapD2toDmxC2 m even} yield the following vertex block, where each matrix block is specified up to orientation sign (we make this assumption from now on),
\footnotesize
\[
		\begin{array}{c|ccccccc|cccc|cccc}
D_m \times C_2 & \multicolumn{7}{c}{D_m \hookrightarrow D_m \times C_2} & \multicolumn{4}{c}{D_2 \hookrightarrow D_m \times C_2} & \multicolumn{4}{c}{D_2 \hookrightarrow D_m \times C_2}\\
		 \hline 
\rho_1 \otimes 	 \chi_1 		 \downarrow			&  1    &  0   & 0 &0 &0 & 0     &0  &  1 & 0 & 0 & 0   &  1 & 0 & 0 & 0\\
\rho_1 \otimes 	( \chi_2 -\chi_1)        \downarrow			&  0    &  1   & 0 &1 &0 & 0     &0  &  0 & 0 & 0 & 1   &  0 & 0 & 0 & 1\\
\rho_1 \otimes 	( \chi_3 -\chi_2)        \downarrow			&  0    &  0   & 1 &-1&0 & 0     &0  &  0 & 0 & 0 & 0   &  0 & 0 & 0 & 0\\
\rho_1 \otimes 	( \chi_4 -\chi_1)        \downarrow			&  0    &  0   & 0 &1 &0 &0      &0  &  0 & 0 & 0 & 0   &  0 & 0 & 0 & 0\\
\rho_1 \otimes 	(\phi_1 -\chi_3 -\chi_1) \downarrow			&  0    &  0   & 0 &1 &1 &0      &0  &  0 & 0 & 0 & 0   &  0 & 0 & 0 & 0\\ \vdots \quad
\rho_1 \otimes 	(\phi_p -\phi_{p-1} )\downarrow
		    \quad \vdots 	 				&  0    &  0   & 0 &0 &0 &\ddots &0  &\vdots&\vdots&\vdots&\vdots&\vdots&\vdots&\vdots&\vdots\\ 
\rho_1 \otimes (\phi_{\frac{m}{2}-1} -\phi_{\frac{m}{2}-2})\downarrow   &  0    &  0   & 0 &0 &0 &0      &1  &  0 & 0 & 0 & 0   &  0 & 0 & 0 & 0\\
\hline
(\rho_2 -\rho_1) \otimes 	 \chi_1\downarrow			&       &      &   &  &  &       &   &  0 & 1 & 0 & 0   &  0 & 1 & 0 & 0\\
(\rho_2 -\rho_1) \otimes ( \chi_2 -\chi_1)\downarrow		  	&       &      &   &  &  &       &   &  0 & 0 & 1 & 0   &  0 & 0 & 1 & 0\\
(\rho_2 -\rho_1) \otimes ( \chi_3 -\chi_2)\downarrow		 	&       &      &   &  &  &       &   &  0 & 0 & 0 & 0   &  0 & 0 & 0 & 0\\
(\rho_2 -\rho_1) \otimes (\chi_4 +\chi_3 -\chi_2 -\chi_1)\downarrow 	&       &      &   &  \mathbf{0}& &       &   &  0 & 0 & 0 & 0   &  0 & 0 & 0 & 0\\
(\rho_2 -\rho_1) \otimes (\phi_1 -\chi_2 -\chi_1)\downarrow	  	&       &      &   &  &  &       &   &  0 & 0 & 0 & 0   &  0 & 0 & 0 & 0\\ 
\vdots \quad
\rho_1 \otimes 	(\phi_p -\phi_{p-1} )\downarrow
		    \quad \vdots 	 				&       &      &   &  &  &       &   &\vdots&\vdots&\vdots&\vdots&\vdots&\vdots&\vdots&\vdots\\ 
\rho_1 \otimes (\phi_{\frac{m}{2}-1} -\phi_{\frac{m}{2}-2})\downarrow   &       &      &   &  &  &       &   &  0 & 0 & 0 & 0   &  0 & 0 & 0 & 0\\
	\end{array} 
\]
\normalsize
Again, we can discard all the rows and columns which have at most one entry $\pm 1$ (and all other entries zero).
This reduces the above vertex block to the much smaller matrix 
$$\left(\begin{array}{ccccccccccccccccc}
   1   & 0 &1 &0 &\pm 1&\pm 1\\
   0   & 1 &-1&0 & 0   &  0 \\
   0   & 0 &1 &0 & 0   &  0 \\
   0   & 0 &1 &1 & 0   &  0 \\ 
\end{array}\right),$$\normalsize
for which we can easily check that all minors lie in $\{0, \pm 1\}$.

\vskip 10pt

For $m \geq 4$ a power of $2$, Tables~\ref{MapDmtoDmxC2 m power of 2} and~\ref{MapD2toDmxC2 m power of 2} yield the following vertex block,
\footnotesize
\[
		\begin{array}{c|ccccccc|cccc|cccc}
D_m \times C_2 & \multicolumn{7}{c}{D_m \hookrightarrow D_m \times C_2} & \multicolumn{4}{c}{D_2 \hookrightarrow D_m \times C_2} & \multicolumn{4}{c}{D_2 \hookrightarrow D_m \times C_2}\\
		 \hline 
\rho_1 \otimes 	 \chi_1 		 \downarrow&  1    &  0   & 0           &0     &0       & 0     &0   &  1 & 0 & 0 & 0   &  1 & 0 & 0 & 0\\
\rho_1 \otimes 	( \chi_2 -\chi_1)        \downarrow&  0    &  1   & 0           &1     &0       & 0     &0   &  0 & 0 & 0 & 1   &  0 & 0 & 0 & 1\\
\rho_1 \otimes 	( \chi_3 -\chi_1)        \downarrow&  0    &  1   & 1           &0     &0       & 0     &0   &  0 & 0 & 0 & 0   &  0 & 0 & 0 & 0\\
\rho_1 \otimes 	( \chi_4 -\chi_2)        \downarrow&  0    &  -1  & 0           &0     &0       &0      &0   &  0 & 0 & 0 & 0   &  0 & 0 & 0 & 0\\
\rho_1 \otimes 	(\phi_1 -\chi_2 -\chi_1) \downarrow&  0    &  0   & 1           &0     &1       &0      &0   &  0 & 0 & 0 & 0   &  0 & 0 & 0 & 0\\ \vdots \quad
\rho_1 \otimes 	(\phi_p -\phi_{p-1} )\downarrow
		    \quad \vdots 	 	   &  0    &  0   & 0           &0     &0       &\ddots &0   &\vdots&\vdots&\vdots&\vdots&\vdots&\vdots&\vdots&\vdots\\ 
(\phi_{\frac{m}{2}-1} -\phi_{\frac{m}{2}-2})\downarrow &  0&  0   & 0           &0     &0       &0      &1  &  0 & 0 & 0 & 0   &  0 & 0 & 0 & 0\\
\hline
(\rho_2 -\rho_1) \otimes 	 \chi_1\downarrow			&       &      &   &  &  &       &   &  0 & 1 & 0 & 0   &  0 & 1 & 0 & 0\\
(\rho_2 -\rho_1) \otimes ( \chi_2 -\chi_1)\downarrow		  	&       &      &   &  &  &       &   &  0 & 0 & 1 & 0   &  0 & 0 & 1 & 0\\
(\rho_2 -\rho_1) \otimes ( \chi_3 -\chi_1)\downarrow		 	&       &      &   &  &  &       &   &  0 & 0 & 0 & 0   &  0 & 0 & 0 & 0\\
(\rho_2 -\rho_1) \otimes (\chi_4 +\chi_3 -\chi_2 -\chi_1)\downarrow 	&       &      &   & \mathbf{0}  & &       &   &  0 & 0 & 0 & 0   &  0 & 0 & 0 & 0\\
(\rho_2 -\rho_1) \otimes (\phi_1 -\chi_2 -\chi_1)\downarrow	  	&       &      &   &  &  &       &   &  0 & 0 & 0 & 0   &  0 & 0 & 0 & 0\\ \vdots \quad
\rho_1 \otimes 	(\phi_p -\phi_{p-1} )\downarrow
		    \quad \vdots 	 				&       &      &   &  &  &       &   &\vdots&\vdots&\vdots&\vdots&\vdots&\vdots&\vdots&\vdots\\ 
\rho_1 \otimes (\phi_{\frac{m}{2}-1} -\phi_{\frac{m}{2}-2})\downarrow   &       &      &   &  &  &       &   &  0 & 0 & 0 & 0   &  0 & 0 & 0 & 0\\
	\end{array} 
\]
\normalsize
Again, we can discard the rows and columns which have at most one entry $\pm 1$ (and all other entries zero).
This reduces the above vertex block to the matrix
 $$\left(\begin{array}{ccccccccccccccccc}
  1    &  0   & 0           &0     &0       &\pm 1& 0   &\pm 1 &  0\\
  0    &  1   & 0           &1     &0       &  0  &\pm 1&  0   &\pm 1\\
  0    &  1   & 1           &0     &0       &  0  & 0   &  0   &  0\\
  0    &  -1  & 0           &0     &0       &  0  & 0   &  0   &  0\\
  0    &  0   & 1           &0     &1       &  0  & 0   &  0   &  0
\end{array}\right),$$
for which we can easily check that it has all its minors in $\{0, \pm 1\}$. This completes the verification of the vertex block condition in the
case of vertices with stabilizer $\Delta(2,2,m) = C_2 \times D_m$ for $m \geq 3$.

For the finitely many remaining stabilizer types, we can proceed case-by-case: we input each vertex block into a computer routine which 
computes all of its minors. Such a routine is straightforward to implement and takes approximately two seconds per vertex block on a 
standard computer. The authors' implementation is available at \textsf{http://math.uni.lu/\textasciitilde{}rahm/vertexBlocks/}. Note that for 
the groups under consideration, the matrix rank of the vertex block is at most $7$, so the $8 \times 8$-minors are all zero, and it is enough 
to compute the $n \times n$-minors for $n \leq 7$. This computer check verifies the minor condition for the vertex blocks associated to
all remaining vertex stabilizers, and completes the proof of the theorem.
\end{proof}

\begin{cor}
For any Coxeter group $\Gamma$ having a system of finite subgroups of types $\Delta(2,2,2) = (C_2)^3$,  $\Delta(2,2,m) = C_2 \times D_m$ for $m \geq 3$, $S_4$, $S_4 \times C_2$ or $A_5 \times C_2$ as vertex stabilizers,
 we have that the Bredon homology group $\hzero$ is torsion-free.
\end{cor}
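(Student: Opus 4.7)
The plan is simply to combine the two preceding results: the Proposition just established produces, under the given hypothesis on vertex stabilizers, a simultaneous change of basis on the representation rings for which every vertex block of the differential matrix $\partial_1$ has all its minors in $\{-1,0,1\}$. Theorem~\ref{thm:basetransf} then applies directly and yields that $\hzero$ is torsion-free.

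More concretely, I would first observe that the Bredon chain complex for $\Gamma$ decomposes as a direct sum, over the orbits of $0$-cells, of blocks attached to each vertex stabilizer, and that the only stabilizer types that can occur by hypothesis are among $\Delta(2,2,2)=(C_2)^3$, $\Delta(2,2,m)=C_2\times D_m$ for $m\geq 3$, $S_4$, $S_4\times C_2$, and $A_5\times C_2$. The Proposition provides, simultaneously for all these stabilizer types and for all possible adjacent two-generator Coxeter edge stabilizers, a choice of bases of the representation rings after which each vertex block satisfies the minor condition of Theorem~\ref{thm:basetransf}. Since such a change of basis is an isomorphism of the underlying free abelian groups and only amounts to a reparametrization of the chain modules $\mathcal{C}_0$ and $\mathcal{C}_1$, it preserves the Bredon homology.

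With this preparation in place, applying Theorem~\ref{thm:basetransf} is immediate: every vertex block has all minors in $\{-1,0,1\}$, which was the sole hypothesis needed to conclude that $\hzero$ is torsion-free. There is essentially no additional obstacle; the entire content of the corollary has already been absorbed into the Proposition (which carries out the simultaneous base change and the case-by-case verification of the minor condition) and Theorem~\ref{thm:basetransf} (which extracts torsion-freeness from the minor condition via the determinantal divisor description of Smith normal form). The only point requiring mild care is to note explicitly that the hypothesis on $\Gamma$ restricts the vertex stabilizer types to exactly those treated in the Proposition, so that no unchecked vertex block can appear in $\partial_1$.
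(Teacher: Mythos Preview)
Your proposal is correct and matches the paper's approach exactly: the corollary is stated without a separate proof in the paper because it follows immediately from combining the preceding Proposition (which verifies the minor condition on all vertex blocks for these stabilizer types) with Theorem~\ref{thm:basetransf}. One small wording caveat: the Bredon chain complex does not literally split as a direct sum over vertices (edges are shared), but what you actually need and use---that the rows of $\partial_1$ are partitioned by vertex blocks and the hypothesis of Theorem~\ref{thm:basetransf} concerns only these blocks---is correct.
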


\begin{remark} 
\begin{enumerate}[(a)]
\item When trying to extend the proof of Theorem~\ref{thm:basetransf} to $\hn$ for $n > 0$, 
one should take into account the natural map $\hn \to H_n(\underbar{B}\Gamma ; \Z)$ described by Mislin~\cite{MV03}, which is an isomorphism for $n > \dim \underbar{E}\Gamma^{\rm sing}+1$,
where $\underbar{E}\Gamma^{\rm sing}$ consists of the non-trivially stabilized points in $\underbar{E}\Gamma$.
Hence such an extension of the theorem can only be useful when $n \leq \dim \underbar{E}\Gamma^{\rm sing}+1$.

\item Note that the search for suitable base transformations for a given group $\Gamma$ (as described in Appendix~\ref{AppendixA} in our case), 
can be quite laborious. If the reader wants to apply Theorem~\ref{thm:basetransf} for a given group $\Gamma$, it is prudent to first construct the 
vertex blocks without any base transformation and compute their elementary divisors. 
If there exists a suitable simultaneous base transformation which satisfies the hypotheses of Theorem~\ref{thm:basetransf}, then those elementary 
divisors must be in the set $\{-1, 0, 1\}$. 
\end{enumerate}
\end{remark}


\section{Further examples with torsion-free $\hzero$}\label{sec:further-examples}

In this section we briefly steer away from Coxeter groups, and instead
give some further examples illustrating our criterion for the Bredon homology 
group $\hzero$ to be torsion-free.

\subsection{The Heisenberg semidirect product group}\label{Heisenberg}
Let us show that $\hzero$ is torsion-free for $\Gamma$ the Heisenberg semidirect product group of L\"uck's paper.
In Tables~\ref{CharTableC2Hei} and~\ref{CharTableC4}, we transform the character tables of all the non-trivial finite subgroups 
of the Heisenberg semidirect product group, as identified by L\"uck~\cite{L05}. 
\medskip 

\begin{table}[!h]
\[ \left(
	\begin{array}{c|rr}
		 C_2 & e & s\\ 
		 \hline 
		 \rho_1 & 1 & 1\\
		 \rho_2 & 1 & -1
	\end{array}
  \right)
  \mapsto
  \left(
	\begin{array}{c|rr}
		 C_2 & e & s\\ 
		 \hline 
		 \rho_1 +\rho_2 & 2 & 0\\
		 \rho_2 & 1 & -1
	\end{array}
  \right)
\] 
\caption{Character table of the cyclic group $C_2$ of order 2, with generator $s$.}\label{CharTableC2Hei}
\end{table}

\medskip

\begin{table}[!h]
\[ \left(
	\begin{array}{c|rrrr}
		 C_4 & e & s & s^2 & s^3\\ 
		 \hline 
		 \rho_1 & 1 & 1 & 1 & 1\\
		 \rho_2 & 1 & -1& 1 & -1\\
		 \rho_3 & 1 & i & -1& -i\\
		 \rho_4 & 1 &-i & -1& i\\
	\end{array}
	\right)
	\mapsto
	 \left(
	\begin{array}{c|rrrr}
		 C_4 & e & s & s^2 & s^3\\ 
		 \hline 
		 \rho_1 & 1 & 1 & 1 & 1\\
	 \rho_2-\rho_1  & 0 & -2& 0 & -2\\
	 \rho_3-\rho_1  & 0 & i-1 & -2& -i-1\\
      \rho_4-\rho_3	& 0 &-2i& 0& 2i\\
      \end{array}
	\right)
\] 
\caption{Character table of the cyclic group $C_4$ of order $4$, with generator $s$. We let $i^2 = -1$.}\label{CharTableC4}
\end{table}

\medskip

In Tables~\ref{C2toC4}, \ref{C2triviallytoC4} and~\ref{C1toC4}, 
we compute all possible induction homomorphisms $R_\mathbb{C} \left( H \right) \to R_\mathbb{C} \left( G \right)$ appearing in any possible Bredon chain complex.

\begin{table}[!h]
\[ 	\begin{array}{c|rr|cc}
C_2 \hookrightarrow C_4 		& e & s^2 & (\cdot | \rho_1 +\rho_2) & (\cdot | \rho_2)\\ 
		 \hline 
		 \rho_1\downarrow 	& 1 & 1   & 1				& 0\\
	 (\rho_2-\rho_1)\downarrow  	& 0 &  0  & 0				& 0\\
	 (\rho_3-\rho_1)\downarrow  	& 0 & -2  & 0				& 1\\
 (\rho_4+\rho_3-\rho_2-\rho_1)\downarrow& 0 & 0   & 0				& 0\\
	\end{array}
\] 
\caption{The only non-trivial inclusion $C_2 \hookrightarrow C_4$ of a cyclic group of order 2 into a cyclic group of order 4: $s \mapsto s^2.$}\label{C2toC4}
\end{table}

\begin{table}[!h]
\[ 	\begin{array}{c|rr|cc}
C_2 \hookrightarrow C_4 		& e & e & (\cdot | \rho_1 +\rho_2) & (\cdot | \rho_2)\\ 
		 \hline 
		 \rho_1\downarrow 	& 1 & 1   & 1				& 0\\
	 (\rho_2-\rho_1)\downarrow  	& 0 & 0   & 0				& 0\\
	 (\rho_3-\rho_1)\downarrow  	& 0 & 0   & 0				& 0\\
 (\rho_4+\rho_3-\rho_2-\rho_1)\downarrow& 0 & 0   & 0				& 0\\
	\end{array}
\] 
\caption{The trivial inclusion $C_2 \hookrightarrow C_4$ of a cyclic group of order~2 into a cyclic group of order 4: $s \mapsto e.$}\label{C2triviallytoC4}
\end{table}

\begin{table}[!h]
\[ 	\begin{array}{c|r|c}
C_2 \hookrightarrow C_4 		& e & (\cdot | \tau) \\ 
		 \hline 
		 \rho_1\downarrow 	& 1 & 1\\
	 (\rho_2-\rho_1)\downarrow  	& 0 & 0\\
	 (\rho_3-\rho_1)\downarrow  	& 0 & 0\\
 (\rho_4+\rho_3-\rho_2-\rho_1)\downarrow& 0 & 0\\
	\end{array}
\] 
\caption{The only inclusion $\{1\} \hookrightarrow C_4$ 
of the trivial group into a cyclic group of order 4.}\label{C1toC4}
\end{table}

Obviously, any concatenation of copies of the three matrices given in Tables~\ref{C2toC4}, \ref{C2triviallytoC4} and~\ref{C1toC4} yields a matrix with all of its minors contained in the set $\{-1, 0,1\}$.
For the inclusions into cyclic groups of order $2$, an analogous (and even simpler) procedure works.
\textit{Hence by Theorem~\ref{thm:basetransf}, $\hzero$ is torsion-free for $\Gamma$ the Heisenberg semidirect product group of L\"uck's article~\cite{L05}.}

\subsection{Crystallographic groups} \label{crystallographic}
Davis and L\"uck \cite{DL13} consider the semidirect product of $\Z^n$ with the cyclic $p$-group $\Z/p$,
where the action of $\Z/p$ on $\Z^n$ is given by an integral representation,
which is assumed to act freely on the complement of zero. 
The action of this semidirect product group $\Gamma$ on $\underbar{E}\Gamma \cong \R^n$ is crystallographic, with 
$\Z^n$ acting by lattice translations, and $\Z/p$ acting with a single fixed point.
In particular, all cell stabilizers are trivial except for one orbit of vertices of stabilizer type $\Z/p$. 
So all maps in the Bredon chain complex are induced by the trivial representation, and we can 
easily apply Theorem~\ref{thm:basetransf} to see that $\hzero$ is torsion-free for $\Gamma$.


\section{$cf(\Gamma)$ and $\chi(\mathcal C)$ from the geometry of $\mathcal P$}\label{sec:geom}

Let $\Gamma$ be the reflection group of the compact 3-dimensional hyperbolic polyhedron $\mathcal P$. In this 
final section, 
we compute the number of conjugacy classes of elements of finite order of $\Gamma$, $cf(\Gamma)$, and the Euler 
characteristic of the Bredon chain complex (\ref{eqn:BredonChainComplex}), $\chi(\mathcal C)$, from the geometry of the 
polyhedron $\mathcal P$. This gives us explicit combinatorial formulas for the 
Bredon homology and equivariant $K$-theory groups computed in our {Main Theorem}. 

\subsection{Conjugacy classes of elements of finite order}\label{sec:cf}
We now give an algorithm to calculate $cf(\Gamma)$, the number of conjugacy classes of elements of finite order in the 
Coxeter group $\Gamma$. We know that each element of finite order can be conjugated to one which 
stabilizes one of the $k$-dimensional faces of the polyhedron, for some $k \in \{0,1,2\}$. Of course,
the only element which stabilizes {\it all} faces is the identity element. Let us set that aside, and consider
the non-identity elements, to which we associate the integer $k$. We now count the elements according
to the integer $k$, in descending order.

\vskip 5pt

\noindent \underline{Case $k=2$:} These are the conjugacy classes represented by the canonical 
generators of the Coxeter group $\Gamma$. The number of these is given by the total number $|\mathcal P^{(2)}|$ 
of facets of the polyhedron $\mathcal P$.

\vskip 5pt

\noindent \underline{Case $k=1$:} These elements are edge stabilizers which are not conjugate
to the stabilizer of a face. We first note that there are some possible conjugacies between edge stabilizers. 
Geometrically, these occur when there is a geodesic $\gamma \subset \mathbb H^3$ whose projection onto
the fundamental domain $\mathcal P$ covers multiple edges inside the $1$-skeleton $\mathcal P ^{(1)}$. 
A detailed analysis of when this can happen is given in~\cite{LO09}. Following the description in that paper,
we decompose the $1$-skeleton into equivalence classes of edges, where two edges are equivalent if there
exists a geodesic whose projection passes through both edges. Denote by $[\mathcal P^{(1)}]$ the set of
equivalence classes of edges, and note that each equivalence class $[e]$ has a well defined group associated to
it, which is just the dihedral group $\Gamma_e$ stabilizing a representative edge. We can thus count the conjugacy 
classes in the corresponding dihedral group, and discard the three conjugacy classes already accounted for 
(the conjugacy class of the two canonical generators counted in case $k=2$, as well as the identity). Thus the 
contribution from finite elements of this type is given by
$$\sum_{[e]\in [\mathcal P^{(1)}]} (c(\Gamma_e) - 3).$$
(Recall that $c(D_m)$, the number of conjugacy classes in a dihedral group of order $2m$, is $m/2+3$ if $m$ even, and $(m-1)/2+2$ if $m$ is odd.)

\vskip 5pt

\noindent \underline{Case $k=0$:} Finally, we consider the contribution from the elements in the vertex stabilizers
{\it which have not already been counted}. That is to say, for each vertex $v\in \mathcal P^{(0)}$, we count the
conjugacy classes of elements in the corresponding $3$-generated spherical triangle group, which cannot be
conjugated into one of the canonical $2$-generated special subgroups. This number, $\bar c(\Gamma_v)$,
depends only on the isomorphism type of the spherical triangle group $\Gamma_v$, see Table~\ref{table:conjugacyspherical}. The contribution from these types of finite elements is thus $$\sum_{v\in \mathcal P^{(0)}} \bar c(\Gamma_v)\,.$$ 
\begin{table}[!ht]
\[
	\begin{array}{c|cc}
		 \Gamma_v & c(\Gamma_v) & \bar c(\Gamma _v)\\ 
		 \hline 
		 \Delta(2,2,m) & 2\,c(D_m) & c(D_m) -3 \\
		 \Delta(2,3,3)  & 5 & 1\\
		 \Delta(2,3,4)  & 10 & 3\\
		 \Delta(2,3,5)  & 10 & 5
	\end{array}
\] 
\caption{Number of conjugacy classes in spherical triangle groups. 
The left column is the total number (cf.~Appendix~\ref{AppendixA}), 
and the right column the number of those not conjugated into one of the three canonical 2-generated special subgroups.} \label{table:conjugacyspherical}
\end{table}

\vskip 5pt

Combining all these, we obtain the desired combinatorial formula for the number of conjugacy classes of elements
of finite order inside the group $\Gamma$:
$$cf(\Gamma)=1+|\mathcal P^{(2)}| + \sum_{[e]\in [\mathcal P^{(1)}]} (c(\Gamma_e) - 3) + \sum_{v\in \mathcal P^{(0)}} \bar c(\Gamma_v)\,.$$

\subsection{Euler characteristic.}
The Euler characteristic of the Bredon chain complex can be easily calculated from the
stabilizers of the various faces of the polyhedron $\mathcal P$, according to the formula:
$$\chi(\mathcal C) = \sum_{f \in \mathcal P} (-1)^{\dim(f)}\dim(R_{\mathbb{C}}(\Gamma_f))\,.$$
Depending on the dimension of the faces, we know exactly what the dimension of the complex representation ring is (the number of conjugacy classes in the stabilizer):
\begin{itemize}
\item for the $3$-dimensional face (the interior), the stabilizer is trivial, so there is a $1$-dimensional complex representation ring;
\item for the $2$-dimensional faces, the stabilizer are $\mathbb Z_2$, and there is a $2$-dimensional complex representation ring;
\item for the $1$-dimensional faces $e$, the stabilizers are dihedral groups, and there is a $c(\Gamma_e)$-dimensional complex representation ring;
\item for the $0$-dimensional faces $v$, the stabilizers are spherical triangle groups, and there is a $c(\Gamma_v)$-dimensional complex representation ring.
\end{itemize}
Putting these together, we obtain
$$\chi(\mathcal C) = -1+2|\mathcal P^{(2)}| - \sum_{e\in \mathcal P^{(1)}} c(\Gamma_e) + \sum_{v\in \mathcal P^{(0)}}  c(\Gamma_v)\,.$$
Finally, we obtain the desired explicit version of the {Main Theorem}, expressing the $K$-theory groups in terms of the geometry of the polyhedron $\mathcal P$. 
\begin{mainthm}[\bf explicit]
Let $\Gamma$ be a cocompact $3$-dimensional hyperbolic reflection group, generated by reflections in the side of a hyperbolic polyhedron $\mathcal{P} \subset \mathbb{H}^3$. Then 
$K_0(C_r^*(\Gamma))$ is a torsion-free abelian group of rank
$$cf(\Gamma)=1+|\mathcal P^{(2)}| + \sum_{[e]\in [\mathcal P^{(1)}]} (c(\Gamma_e) - 3) + \sum_{v\in \mathcal P^{(0)}} \bar c(\Gamma_v)\,,$$
and $K_1(C_r^*(\Gamma))$ is a torsion-free abelian group of rank
$$cf(\Gamma)-\chi(\mathcal C) = 2 - |\mathcal P^{(2)}| + \sum_{[e]\in [\mathcal P^{(1)}]} (c(\Gamma_e) - 3) +\sum_{e\in \mathcal P^{(1)}} c(\Gamma_e) - \sum_{v\in \mathcal P^{(0)}} (c(\Gamma_v) -\bar c(\Gamma_v))\,,$$
where the values for the $c(\Gamma_v)$ and $\bar{c}(\Gamma_v)$ are listed in Table~\ref{table:conjugacyspherical}. 
\end{mainthm}

\bibliographystyle{plain}

\appendix
\section{Character tables and base transformations}\label{AppendixA}
In this Appendix, we list the character tables of all the groups involved in the Bredon chain complex (\ref{BredonChainComplex}), that is, the finite Coxeter subgroups 
of $\Gamma$ up to rank three. 
These are based on the representation theory described in~\cite{JamesLiebeck},
where all these character tables are constructed. 
In the character tables below, rows correspond to irreducible representations, and columns to representatives of conjugacy classes, written in term of the Coxeter generators $s_1, \ldots, s_n$ in a fixed Coxeter presentation of $\Gamma$, as in (\ref{CoxeterPresentation}). 

In addition, for each character table, we apply elementary row operations to obtain the transformed tables needed for Appendix~\ref{AppendixB}, which are in turn used 
in our proof that $H_0$ is torsion-free (Section~\ref{sec:H0-algebra}). 
Although the rows of the transformed tables are not irreducible characters, it is easy to check that they still constitute bases for the complex representation rings.

Note that, for consistency across subgroups, we will pay particular attention to the order of the Coxeter generators within a subgroup. We write $e$ for the identity 
element in $\Gamma$. 

\subsection{Rank 0} This is the trivial group, with character table given below. 
\begin{table}[!ht]
\[
	\begin{array}{c|c}
		 & e\\ \hline \tau & 1
	\end{array}
\] 
\medskip\caption{Character table of the trivial group.}\label{CharTableTrivial}
\end{table}

\subsection{Rank 1} A rank 1 Coxeter group is a cyclic group of order 2. 
Write $s_i$ for its Coxeter generator, then its character table is Table~\ref{CharTableC2}.

\subsection{Rank 2} \label{sec:AppendixARank2}
A finite rank 2 Coxeter group with Coxeter generators $s_i$ and $s_j$ is a dihedral group of order $m=m_{ij} \ge 2$,
\begin{equation}\label{eqn:PresentationDihedral}
	D_{m} = \langle s_i, s_j \; | \; s_i^2=s_j^2=(s_is_j)^{m} \rangle\,.
\end{equation}
The character table of this group is given in Table~\ref{CharTableDm}.
In order to be consistent, we assume the Coxeter generators are ordered so that $i < j$. If $j < i$, then the character table is identical except that the third and fourth rows (the characters $\widehat{\chi}_3$ and $\widehat{\chi_4}$) are interchanged, since $(s_js_i)^r=(s_is_j)^{-r}$ and $s_i(s_js_i)^r=s_j(s_is_j)^{-r-1}$.

For the case $m=2$, that is $D_2 = C_2 \times C_2$, we will sometimes use the notation coming from the character table of $C_2$ (Table~\ref{CharTableC2}) instead. (Recall that the irreducible characters of a direct product $G \times H$ are obtained from the irreducible characters of $G$ and $H$ as $\rho_i \otimes \tau_j$, where $\left(\rho_i \otimes \tau_j\right) (g,h)=\rho_i(g) \cdot \tau_j(h)$.)
This gives the notation and characters in Table~\ref{CharTableC2xC2}, which are equivalent to Table~\ref{CharTableDm} with $\rho_1 \otimes \rho_1 = \chi_1$, $\rho_1 \otimes \rho_2 = \chi_4$, $\rho_2 \otimes \rho_1 = \chi_3$ and $\rho_2 \otimes \rho_2 = \chi_2$. As before, we assume $i<j$, or, if $j<i$, the second and third rows (characters) must be interchanged. 
\begin{table}[!ht]
\[
	\begin{array}{c|rrrr}
		 C_2 \times C_2 & e & s_i & s_j & s_is_j\\ 
		 \hline 
		 \rho_1 \otimes \rho_1 & 1 &  1 & 1 & 1\\
		 \rho_1 \otimes \rho_2 & 1 &  1 & -1 & -1\\
		 \rho_2 \otimes \rho_1 & 1 &  -1 & 1 & -1\\
		 \rho_2 \otimes \rho_2 & 1 &  -1 & -1 & 1
	\end{array}
\] 
\medskip\caption{Alternative character table of $\langle s_i, s_j \rangle \cong D_2 = C_2 \times C_2$, $i < j$.}\label{CharTableC2xC2}
\end{table}

We now give the base transformations of the character table of $D_m$ needed later, shown in Tables~\ref{CharTableD2}, \ref{CharTableDm m odd} and~\ref{CharTableDm m even}. 

\begin{table}[!htb]
\[
		\begin{array}{c|cccc}
		D_2 & e & s_i s_j & s_i & s_j\\ 
		 \hline 
		 \sum\chi_i& 4 & 0 & 0 & 0\\ 
		 \chi_2 +{\chi_3} & 2 & 0 & -2 & 0 \\
		 {\chi_3} & 1 & -1 & -1 & 1 \\
        {\chi_3}+{\chi_4} & 2 & -2 & 0 & 0 \\
	\end{array}
\]
\medskip\caption{Base transformation of the character table of $D_2$.} \label{CharTableD2}
\end{table} 

\begin{table}[!htb]
\[
\begin{array}{c|ccccc}
		 D_m        & e    & s_i  & s_i s_j                 & (s_i s_j)^r  &(s_i s_j)^{\frac{m-1}{2}}\\ 
		 \hline 
\chi_1+\chi_2+2\sum_{p = 1}^\frac{m-1}{2} \phi_p& 2m   &  0   & 0  & \hdots & 0 \\
      \chi_2+ \sum_{p = 1}^\frac{m-1}{2} \phi_p & m    & -1   & 0  & \hdots & 0\\
              \sum_{p = 1}^\frac{m-1}{2} \phi_p & m-1  &  0   & -1 & \hdots & -1\\\vdots\quad
\sum_{p = k}^\frac{m-1}{2}\phi_p \quad\vdots    &m-2k+1&\vdots& a_{k,1}  & a_{k,r} & a_{k,\frac{m-1}{2}}\\ 		 
	\phi_{\frac{m-1}{2}}  			& 2    &  0   & b_{1} & b_{r} & b_{\frac{m-1}{2}} \\
	\end{array}
\]
\medskip
\caption{Base transformation of the character table of $D_m$, $m \geq 3$ odd. Here, $a_{k,r} := \sum_{p = k}^\frac{m-1}{2} 2\cos(\frac{2\pi p r}{m})$, $b_{r} :=2\cos(\frac{\pi(m-1)r}{m}) $, and $1< k, r < \frac{m-1}{2}$.}\label{CharTableDm m odd}
\end{table}

\begin{table}[!htb]
\[
\begin{array}{c|cccccc}
		 D_m        				& e    & s_i  & s_i s_j                 &(s_i s_j)^r&(s_i s_j)^{\frac{m}{2}} & s_j s_i s_j\\ 
		 \hline 
\sum_{p=1}^{4}\chi_p +2\sum_{p=1}^{\frac{m}{2}-1}\phi_p & 2m   &  0   & 0  			& \hdots        	& 0 & 0\\
\chi_2+ \chi_3 +\sum_{p=1}^{\frac{m}{2}-1}\phi_p  & m    &  0   & 0  			& \hdots 	    	& 0 & -2\\
              \chi_3 +\sum_{p=1}^{\frac{m}{2}-1}\phi_p  & m-1  &  1   & -1		 	& \hdots 	 	& -1 & -1\\
      \chi_2+ \chi_4 +\sum_{p=1}^{\frac{m}{2}-1}\phi_p  & m    & -2   & 0  			&\hdots  		&  0 & 0\\              
\sum_{p=1}^{\frac{m}{2}-1}\phi_p			& m-2  &  0   & 0 			& -1-(-1)^r & -1-(-1)^{\frac{m}{2}} & 0\\\vdots\quad
\sum_{p = k}^{\frac{m}{2}-1}\phi_p           \quad\vdots&m-2k  &\vdots& a_{k,1}   & a_{k,r} &2\sum_{p = k}^{\frac{m}{2}-1}(-1)^p  & 0\\ 		 
	\phi_{\frac{m}{2}-1} 				& 2    &  0   & b_1 & b_r &2(-1)^{\frac{m}{2}-1}  & 0\\
	\end{array}
\]
\medskip
\caption{Base transformation of the character table of $D_m$, $m \geq 4$ even. Here, $a_{k,r} := \sum_{p = k}^{\frac{m}{2}-1} 2\cos(\frac{2\pi p r}{m})$, $b_{r} :=2\cos(\frac{\pi(m-2)r}{m}) $, $1 < k < \frac{m}{2}-1$ and $1 < r < \frac{m}{2}$.}\label{CharTableDm m even}
\end{table}

\subsection{Rank 3} A finite rank 3 Coxeter subgroup is one of the spherical triangle groups $\Delta(2,2,m)$, with $m \ge 2$, $\Delta(2,3,3)$, $\Delta(2,3,4)$, or $\Delta(2,3,5)$, 
using the notation of Equation~(\ref{eqn:TriangleGroups}), or, more compactly, the Coxeter diagrams in Figure~\ref{fig:CoxeterDiagrams}.

\begin{figure}[!ht]
\definecolor{ttqqqq}{rgb}{0.2,0,0}\begin{tikzpicture}[line cap=round,line join=round,>=triangle 45,x=1cm,y=1cm]\draw (-1,0)-- (0,0);\draw (1.5228505978878246,-0.01518665482800333)-- (2.5228505978878246,-0.01518665482800333);\draw (2.5228505978878246,-0.01518665482800333)-- (3.5228505978878246,-0.01518665482800333);\draw (5.06662787912817,-0.03876535251766991)-- (6.06662787912817,-0.03876535251766991);\draw (6.06662787912817,-0.03876535251766991)-- (7.066627879128174,-0.03876535251766991);\draw (8.529955421143631,-0.02118267329724504)-- (9.529955421143631,-0.02118267329724504);\draw (9.529955421143631,-0.02118267329724504)-- (10.529955421143635,-0.02118267329724504);
\begin{normalsize}\draw [fill=ttqqqq] (-2,0) circle (2.5pt);\draw [fill=ttqqqq] (-1,0) circle (2.5pt);\draw [fill=ttqqqq] (0,0) circle (2.5pt);\draw[color=black] (-0.5063520482015159,0.2802981955398016) node {$m$};\draw [fill=black] (1.5228505978878246,-0.01518665482800333) circle (2.5pt);\draw [fill=black] (2.5228505978878246,-0.01518665482800333) circle (2.5pt);\draw [fill=black] (3.5228505978878246,-0.01518665482800333) circle (2.5pt);\draw [fill=black] (5.06662787912817,-0.03876535251766991) circle (2.5pt);\draw [fill=black] (6.06662787912817,-0.03876535251766991) circle (2.5pt);\draw [fill=black] (7.066627879128174,-0.03876535251766991) circle (2.5pt);\draw[color=black] (6.532825395586867,0.2655696116315269) node {4};\draw [fill=black] (8.529955421143631,-0.02118267329724504) circle (2.5pt);\draw [fill=black] (9.529955421143631,-0.02118267329724504) circle (2.5pt);\draw [fill=black] (10.529955421143635,-0.02118267329724504) circle (2.5pt);\draw[color=black] (9.99404261403141,0.2950267794480762) node {5};
\end{normalsize}\end{tikzpicture}
\caption{Coxeter diagrams of rank 3 spherical Coxeter groups.}\label{fig:CoxeterDiagrams}
\end{figure}

In these diagrams, the vertices represent Coxeter generators and the edges are labelled by $m_{ij}$, with the conventions: no edge if $m_{ij}=2$, and no label if $m_{ij}=3$. 

\subsubsection{$\Delta(2,2,2)$}
This triangle group is isomorphic to $C_2 \times C_2 \times C_2$ and we have irreducible characters $\rho_{abc} := \rho_a \otimes \rho_b \otimes \rho_c$, $a,b,c \in \{1,2\}$, from Table~\ref{CharTableC2}, listed in Table~\ref{CharTableC2xC2xC2} below, where $\rho_a \otimes \rho_b \otimes \rho_c (x) = \rho_a(x_1) \cdot \rho_b(x_2) \cdot \rho_c(x_3)$ for all $x=(x_1,x_2,x_3) \in C_2 \times C_2 \times C_2$.
\begin{table}[!ht]
\[
	\begin{array}{c|rrrrrrrr}
		 C_2 \times C_2 \times C_2 & e & s_i & s_j & s_k & s_is_j & s_is_k & s_js_k & s_is_js_k \\ 
		 \hline 
		 \rho_{111} := \rho_1 \otimes \rho_1 \otimes \rho_1 & 1 &  1 & 1 & 1 & 1 &  1 & 1 & 1\\
		 \rho_{112} := \rho_1 \otimes \rho_1 \otimes \rho_2 & 1 &  1 & 1 & -1 & 1 &  -1 & -1 & -1\\
		 \rho_{121} := \rho_1 \otimes \rho_2 \otimes \rho_1 & 1 &  1 & -1 & 1 & -1 &  1 & -1 & -1\\
		 \rho_{122} := \rho_1 \otimes \rho_2 \otimes \rho_2 & 1 &  1 & -1 & -1 & -1 &  -1 & 1 & 1\\
		 \rho_{211} := \rho_2 \otimes \rho_1 \otimes \rho_1 & 1 &  -1 & 1 & 1 & -1 &  -1 & 1 & -1\\
		 \rho_{212} := \rho_2 \otimes \rho_1 \otimes \rho_2 & 1 &  -1 & 1 & -1 & -1 &  1 & -1 & 1\\
		 \rho_{221} := \rho_2 \otimes \rho_2 \otimes \rho_1 & 1 &  -1 & -1 & 1 & 1 &  -1 & -1 & 1\\
		 \rho_{222} := \rho_2 \otimes \rho_2 \otimes \rho_2 & 1 &  -1 & -1 & -1 & 1 &  1 & 1 & -1\\
	\end{array} 
\] 
\medskip\caption{Character table of $\langle s_i, s_j,s_k \rangle \cong \Delta(2,2,2) = C_2 \times C_2 \times C_2$, $i<j<k$, from Table~\ref{CharTableC2}.}\label{CharTableC2xC2xC2}
\end{table}
Here we assume that we have ordered the Coxeter generators $s_i, s_j, s_k$ so that $i<j<k$. Finally, the base transformation of the character table of $\Delta(2,2,2)$ needed later is shown in Table~\ref{BTCharTableC2xC2xC2}.

\begin{table}[!htb]
\[
	\begin{array}{c|rrrrrrrr}
		 C_2 \times C_2 \times C_2 & e & s_i & s_j & s_k & s_is_j & s_is_k & s_js_k & s_is_js_k \\ 
		 \hline 
					 \rho_{111} & 1 &  1 & 1  & 1  & 1 &  1  & 1  & 1\\
                             \rho_{112} -\rho_{111} & 0 &  0 & 0  & -2 & 0 &  -2 & -2 & -2\\
                             \rho_{121} -\rho_{111}& 0 &  0  & -2 & 0  & -2 &  0  & -2 & -2\\
                             \rho_{122} -\rho_{121}& 0 &  0  & 0  & -2 & 0  &  -2 & 2  & 2\\
                             \rho_{211} -\rho_{111}& 0 &  -2 & 0  & 0  & -2 &  -2 & 0  & -2\\
                             \rho_{212} -\rho_{211}& 0 &  0  & 0  & -2 & 0  &  2  & -2 & 2\\
                             \rho_{221} -\rho_{121}& 0 &  -2 & 0  & 0  & 2  &  -2 & 0  & 2 	\\
                             \rho_{222} -\rho_{221}& 0 &  0  &  0 & -2 & 0 &  2 & 2 & -2\\
	\end{array}
\] 
\medskip
\caption{Base transformation of the character table of \mbox{$\langle s_i, s_j,s_k \rangle \cong \Delta(2,2,2) = C_2 \times C_2 \times C_2$,} $i<j<k$.
}\label{BTCharTableC2xC2xC2}
\end{table}

\subsubsection{$\Delta(2,2,m)$ with $m > 2$} This group is isomorphic to $C_2 \times D_m$, and has Coxeter presentation
\begin{equation*}
		\Delta(2,2,m) = \left\langle s_i, s_j, s_k \; | \; s_i^2,s_j^2,s_k^2,(s_is_j)^2,(s_is_k)^2,(s_js_k)^m \right\rangle\,,
\end{equation*}
where we have sorted the Coxeter generators $s_j$ and $s_k$ such that $j < k$ (the generator $s_i$ is uniquely determined from the presentation). As a direct product of two groups, the character table of this group can be obtained from those of $C_2$ (Table~\ref{CharTableC2}) and $D_m$ (Table~\ref{CharTableDm}). This is shown on Table~\ref{CharTableDelta22n}, where $T_{D_m}$ is the matrix of entries of the character table of $D_m$ (Table~\ref{CharTableDm}). As explained before, if $k<j$ one needs to swap the characters $\chi_3$ and $\chi_4$, that is, swap $\rho_1 \otimes \chi_3$ and $\rho_1 \otimes \chi_4$, and $\rho_2 \otimes \chi_3$ and $\rho_2 \otimes \chi_4$. 
\begin{table}[!ht]
\[
	\begin{array}{c|cc|cc}
		 \Delta(2,2,m) & (s_js_k)^r & s_k(s_js_k)^r & s_i (s_js_k)^r & s_i s_k(s_js_k)^r\\ 
		 \hline 
		 \rho_1 \otimes \chi_1 & & & & \\
		 \rho_1 \otimes \chi_2 & & & &\\
		 \rho_1 \otimes \widehat{\chi_3} & \multicolumn{2}{c|}{T_{D_m}} & \multicolumn{2}{c}{T_{D_m}}\\
		 \rho_1 \otimes \widehat{\chi_4} & & & &\\
		 \rho_1 \otimes \phi_p & & & &\\
		 \hline 
		 \rho_2 \otimes \chi_1 & & & & \\
		 \rho_2 \otimes \chi_2 & & & &\\
		 \rho_2 \otimes \widehat{\chi_3} & \multicolumn{2}{c|}{T_{D_m}} & \multicolumn{2}{c}{-T_{D_m}}\\
		 \rho_2 \otimes \widehat{\chi_4} & & & &\\
		 \rho_2 \otimes \phi_p & & & &\\
	\end{array}
\]
\medskip\caption{Character table of $\Delta(2,2,m)$, $m=m_{jk}>2$, $j<k$.}\label{CharTableDelta22n}
\end{table}

The corresponding base transformations for $\Delta(2,2,m) \cong C_2 \times D_m$ are given in Table~\ref{CharTableDmxC2 m odd} ($m$ odd), Table~\ref{CharTableDmxC2 m even} ($m \ge 6$ even not a power of 2), and Table~\ref{CharTableDmxC2 m power of 2} ($m \ge 4$ a power of 2). 

\begin{table}[htb] \small
\[ 	
	\begin{array}{c|ccc|ccc}
D_m \times C_2 &  e   & s_i  & (s_i s_j)^r  &   \alpha    &\alpha s_i    & \alpha(s_i s_j)^r \\ 
		 \hline 
\rho_1 \otimes 	 \chi_1 				  	& 1 & 1 &  1 & 1 & 1 & 1 \\
\rho_1 \otimes 	( \chi_2 -\chi_1)                         & 0 & -2  & 0 & 0 &  -2  & 0 \\
\rho_1 \otimes 	(\phi_1 -\chi_2 -\chi_1)		   & 0 & 0 & b_r & 0 & 0 & b_r\\
\rho_1 \otimes 	(\phi_p -\phi_{p-1} ), 	  	& 0 & 0  & a_{p,r} &  0 & 0 & a_{p,r}  \\
\hline
(\rho_2 -\rho_1) \otimes 	 \chi_1			  &  0 & 0  & 1 &  -2 & -2 & -2 \\
(\rho_2 -\rho_1) \otimes ( \chi_2 -\chi_1)		  & 0  & 0 & 0 &   0 &  4 & 0  \\
(\rho_2 -\rho_1) \otimes (\phi_1 -\chi_2 -\chi_1)	  & 0 & 0 & 0 & 0 & -4  & -2b_r \\
(\rho_2 -\rho_1) \otimes (\phi_p -\phi_{p-1} ),		  & 0 & 0 & 0 &   0 &  0  & -2a_{p,r}  \\
\end{array}
\]
\medskip
\caption{Base transformation of the character table of $D_m \times C_2$ for $m\geq 3$ odd. 
Here, $a_{p,r} := 2\cos(\frac{2\pi pr}{m})-2\cos(\frac{2\pi (p-1)r}{m})$, 
$b_r := 2\cos(\frac{2\pi r}{m})-2$, $2 \leq p\leq \frac{m-1}{2}$ and $1 \le r \le m-1$.}\label{CharTableDmxC2 m odd}
\end{table}

\begin{table}[htb] \small
\[ 	
	\begin{array}{c|ccc|ccc}
		 D_m \times C_2 & s_i & (s_i s_j)^r & s_j s_i s_j  &\alpha s_i  & \alpha(s_i s_j)^r &\alpha s_j s_i s_j \\ 
\hline 
\rho_1 \otimes 	 \chi_1 				 & 1  & 1 & 1 		 &1 & 1 & 1  \\
\rho_1 \otimes 	( \chi_2 -\chi_1)                & -2  & 0 & -2 		 & -2  & 0 & -2 \\
\rho_1 \otimes 	( \chi_3 -\chi_2)                & 2  & c_r& 0 	 & 2 & c_r & 0\\
\rho_1 \otimes 	( \chi_4 -\chi_1)                & -2  & c_r & 0 & -2 & c_r & 0\\
\rho_1 \otimes 	(\phi_1 -\chi_3 -\chi_1)	 & -2 & b_r & 0 		 & -2 & b_r & 0\\
\vdots\quad \rho_1 \otimes (\phi_p -\phi_{p-1} ) \quad \vdots  
								& 0  &a_{p,r}& 0 	& 0 & a_{p,r} & 0 \\
\hline
(\rho_2 -\rho_1) \otimes 	 \chi_1		   &0&0&0		& -2 & -2 & -2 \\
(\rho_2 -\rho_1) \otimes ( \chi_2 -\chi_1)   &0&0&0	 	&  4 & 0  &4\\
(\rho_2 -\rho_1) \otimes ( \chi_3 -\chi_2)   &0&0&0		&-4&-2c_r& 0\\
(\rho_2 -\rho_1) \otimes ( \chi_4 +\chi_3 -\chi_2 -\chi_1)
								   &0&0&0		& 0 & -4c_r &0\\
(\rho_2 -\rho_1) \otimes (\phi_1 -\chi_2 -\chi_1)
								   &0&0&0		& 0 & -2(b_r+c_r) & 0\\
(\rho_2 -\rho_1) \otimes (\phi_p -\phi_{p-1} )		  
								 &0&0&0		& 0 & -2a_{p,r} & 0\\
\end{array} 
\]
\medskip\caption{Base transformation of the character table of $D_m \times C_2$ for $m \geq 6$ even, not a power of $2$. 
 Here, 
 \mbox{ $a_{p,r} := 2\cos(\frac{2\pi pr}{m})-2\cos(\frac{2\pi (p-1)r}{m})$}, 
\mbox{$b_r := 2\cos(\frac{2\pi r}{m})-(-1)^r-1$,}
\mbox{$c_r = (-1)^r -1$} and \mbox{$1 < p, r < \frac{m}{2}$}.}\label{CharTableDmxC2 m even}
\end{table}

\begin{table}[htb] \small
\[ 	
	\begin{array}{c|ccc|ccc}
		 D_m \times C_2 & s_i & (s_i s_j)^r & s_j s_i s_j  &\alpha s_i  & \alpha(s_i s_j)^r &\alpha s_j s_i s_j \\ 
\hline 
\rho_1 \otimes 	 \chi_1 				 & 1  & 1 & 1 		 &1 & 1 & 1  \\
\rho_1 \otimes 	( \chi_2 -\chi_1)                & -2  & 0 & -2 		 & -2  & 0 & -2 \\
\rho_1 \otimes 	( \chi_3 -\chi_2)                & 0  & c_r& -2 	 	& 0 & c_r & -2\\
\rho_1 \otimes 	( \chi_4 -\chi_1)                & 0  & c_r & 2 		& 0 & c_r & 2\\
\rho_1 \otimes 	(\phi_1 -\chi_3 -\chi_1)	 & 0 & b_r & 0 		 & 0 & b_r & 0\\
\vdots\quad \rho_1 \otimes (\phi_p -\phi_{p-1} ) \quad \vdots  
								& 0  &a_{p,r}& 0 	& 0 & a_{p,r} & 0 \\
\hline
(\rho_2 -\rho_1) \otimes 	 \chi_1		   &0&0&0		& -2 & -2 & -2 \\
(\rho_2 -\rho_1) \otimes ( \chi_2 -\chi_1)   &0&0&0	 	&  4 & 0  &4\\
(\rho_2 -\rho_1) \otimes ( \chi_3 -\chi_2)   &0&0&0		&0&-2c_r& 4\\
(\rho_2 -\rho_1) \otimes ( \chi_4 +\chi_3 -\chi_2 -\chi_1)
								   &0&0&0		& 0 & -4c_r &0\\
(\rho_2 -\rho_1) \otimes (\phi_1 -\chi_2 -\chi_1)
								   &0&0&0		& 0 & -2b_r & 0\\
(\rho_2 -\rho_1) \otimes (\phi_p -\phi_{p-1} )		  
								 &0&0&0		& 0 & -2a_{p,r} & 0\\
\end{array} 
\]
\medskip
\caption{Base transformation of the character table of \mbox{$D_m \times C_2$}
for $m \geq 4$ even and a power of $2$.
Here, 
\mbox{$a_{p,r} := 2\cos(\frac{2\pi pr}{m})-2\cos(\frac{2\pi (p-1)r}{m})$}, 
\mbox{$b_r := 2\cos(\frac{2\pi r}{m})-2$}, 
\mbox{$c_r = (-1)^r -1$} and $1 < p, r < \frac{m}{2}$.}\label{CharTableDmxC2 m power of 2}
\end{table}

\subsubsection{$\Delta(2,3,3)$} This triangle group has Coxeter presentation
\begin{equation}\label{eqn:PresentationDelta233}
	\Delta(2,3,3) = \left\langle s_i, s_j, s_k \; | \; s_i^2,s_j^2,s_k^2,(s_is_j)^3,(s_is_k)^2,(s_js_k)^3 \right\rangle\,.
\end{equation}
This group is, in standard notation, $A_3$, and it is isomorphic to the symmetric group $S_4$, with Coxeter generators $s_i = (1\ 2)$, $s_j = (2\ 3)$ and $s_k = (3\ 4)$, for instance. Then we have conjugacy classes (cycle types in $S_4$) represented, in terms of the Coxeter generators, by $s_i=(1\ 2) $, $s_i s_j = (1\ 3\ 2)$, $s_i s_j s_k = (1\ 4\ 3\ 2)$ and $s_i s_k = (1\ 2) (3\ 4)$. In particular, we have the character table for $\Delta(2,3,3)$ shown in Table~\ref{CharTableDelta233} below. In this case, the Coxeter generators are unique up to conjugation since $\text{Out}(S_4)$ is trivial, hence the choice of $s_i$ and $s_k$ does not affect the character table. 
\begin{table}[!ht]
\[
	\begin{array}{c|ccccc}
		 \Delta(2,3,3) & e & s_i & s_is_j & s_is_js_k & s_is_k\\ 
		 \hline 
		 \xi_1 & 1 &  1 & 1 & 1 & 1\\
		 \xi_2 & 1 &  -1 & 1 & -1 & 1\\
		 \xi_3 & 2 &  0 & -1 & 0 & 2\\
		 \xi_4 & 3 &  1 & 0 & -1 & -1\\
		 \xi_5 & 3 &  -1 & 0 & 1 & -1\\
	\end{array}
\]
\medskip\caption{Character table of $\langle s_i, s_j, s_k \rangle \cong \Delta(2,3,3) \cong S_4$.}\label{CharTableDelta233}
\end{table}

The required base transformation of the character table Table~\ref{CharTableDelta233} is given in Table~\ref{BTCharTableDelta233}.
\begin{table}[htb]
\[
	\begin{array}{c|ccccc}
		  S_4 & e & s_i & s_i s_j & s_i s_j s_k & s_i s_k\\ 
		 \hline 
		 \xi_1 & 1 &  1 & 1 & 1 & 1\\
		 \widetilde{\xi_2} := \xi_2 -\xi_1 & 0 & -2 & 0 & -2 & 0\\
		 \widetilde{\xi_3} := \xi_3 -\xi_2 -\xi_1 & 0 & 0 & -3 & 0 & 0\\
		 \widetilde{\xi_4} := \xi_4 -\xi_3 -\xi_1 & 0 & 0 & 0 & -2 & -4\\
		 \widetilde{\xi_5} := \xi_5 -\xi_4 -\xi_2 +\xi_1 & 0 & 0 & 0 & 4 & 0 \\
	\end{array}
\]
\medskip\caption{Base transformation of the character table of $\Delta(2,3,3) \cong S_4$.}\label{BTCharTableDelta233}
\end{table}

\subsubsection{$\Delta(2,3,4)$} This triangle group has Coxeter presentation
\begin{equation}\label{eqn:PresentationDelta234}
	\Delta(2,3,4) = \left\langle s_i, s_j, s_k \; | \; s_i^2,s_j^2,s_k^2,(s_is_j)^3,(s_is_k)^2,(s_js_k)^4 \right\rangle ,
\end{equation}
and the Coxeter generators are uniquely determined from the presentation. In standard notation, this is the finite Coxeter group $B_3$ (or $C_3$). This group is isomorphic to $S_4 \times C_2$ with Coxeter generators, for instance, $s_i = (1\ 2) \alpha$, $s_j = (1\ 3) \alpha$ and $s_k = (1\ 2) (3\ 4) \alpha$, where $\alpha$ is the generator of the $C_2$ factor. We can choose representatives of the conjugacy classes in terms of the Coxeter generators, for example, 
\begin{align*}
	e 			& \ \sim \ e			& \alpha 				&\ \sim \  (s_is_js_k)^3\\
	(1\ 2) 		&\ \sim \  s_i s_k 		& (1\ 2)\alpha			&\ \sim \    s_i\\
	(1\ 2 \ 3) 		&\ \sim \  s_i s_j 		& (1\ 2\ 3)\alpha 		&\ \sim \    s_is_js_k\\
	(1\ 2\ 3\ 4) 	&\ \sim \ s_j s_k 		& (1\ 2\ 3\ 4)\alpha 		&\ \sim \   s_i(s_js_k)^2\\
 	(1\ 2) (3\ 4) 	&\ \sim \  (s_j s_k)^2  	& (1\ 2) (3\ 4)\alpha  	&\ \sim \    s_k\\
\end{align*}
where `$\sim$' means `conjugate'. Using this representatives, and the fact that $\Delta(2,3,4)$ is a direct product, the character table of this group can be written as in Table~\ref{CharTableDelta234} below, where $T_{S_4}$ is the matrix of coefficients of the character table of $S_4$ (Table~\ref{CharTableDelta233}).
\begin{table}[!ht]
\[
	\begin{array}{c|ccccc|ccccc}
		 S_4\times C_2 & e & s_is_k & s_is_j & s_js_k & (s_js_k)^2 & (s_is_js_k)^3 & s_i & s_is_js_k & s_i(s_js_k)^2 & s_k\\ 
		 \hline 
		 \rho_1 \otimes \xi_1 &  &   &  &  & & & & &  \\
		 \rho_1 \otimes\xi_2 &  &   &  & & \\
		 \rho_1 \otimes\xi_3 & \multicolumn{5}{c|}{T_{S_4}} & \multicolumn{5}{c}{T_{S_4}}\\
		 \rho_1 \otimes\xi_4 &  &   &  &  & \\
		 \rho_1 \otimes\xi_5 &  &   &  &  & \\
		 \hline
		 \rho_2 \otimes \xi_1 &  &   &  &  & & & & &  \\
		 \rho_2 \otimes\xi_2 &  &   &  & & \\
		 \rho_2 \otimes\xi_3 & \multicolumn{5}{c|}{T_{S_4}} & \multicolumn{5}{c}{-T_{S_4}}\\
		 \rho_2 \otimes\xi_4 &  &   &  &  & \\
		 \rho_2 \otimes\xi_5 &  &   &  &  & 
	\end{array}
\]
\medskip\caption{Character table of $\langle s_i, s_j, s_k \rangle \cong \Delta(2,3,4) = S_4 \times C_2$.}\label{CharTableDelta234}
\end{table} 

The base transformation needed is given in Table~\ref{BTCharTableDelta234}, where $\{\widetilde{\xi_i}\}$ is the transformed basis of the character table of $S_4$, Table~\ref{BTCharTableDelta233}.

\begin{table}[!htb]
\footnotesize
\[ 
\begin{array}{c|ccccc|ccccc}
S_4\times C_2 & (1) & (12) & (123)& (1234) & (12)(34) & \alpha & \alpha(12) & \alpha(123)& \alpha(1234) & \alpha(12)(34)\\ 
\hline 
\alpha_1						& 1 & 1  & 1 & 1  & 1 & 1 & 1 & 1 & 1 & 1\\
\alpha_2			& 0 & -2 & 0 & -2 & 0 & 0 &-2 & 0 &-2 & 0 \\
\alpha_3			& 0 & 0  &-3 & 0  & 0 & 0 & 0 &-3 & 0 & 0\\
\alpha_4		& 0 &-2  & 0 & 4  & -4& 2 & 0 & 2 & 6 &-2\\
\alpha_5				& 0 & 0  & 0 & 4  & 0 & 0 & 0 & 0 & 4 & 0\\
\hline
\alpha_6   & 0 & 2  & 0 &  2  & 0 &-2 & 0 &-2 & 0 &-2 \\
\alpha_7 	& 0 &-4  & 0 & 0  & 0 & 0 & 0 & 0 & 4 & 0 \\
\alpha_8			& 0 & 0  & 0 & 0  & 0 & 0 & 0 & 6 & 0 & 0 \\
\alpha_9					& 0 & 0  & 0 & 6  &-4 & 4 & 0 & 4 &-6 &  8\\
\alpha_{10}			& 0 & 0  & 0 & 0  & 0 & 0 & 0 & 0 & -8& 0  
\end{array}
\]
where $\alpha_1 := \rho_1 \otimes \xi_1$, $\alpha_2 := \rho_1 \otimes\widetilde{\xi_2}$, $\alpha_3 := \rho_1 \otimes\widetilde{\xi_3}$, $\alpha_4 := \rho_1 \otimes (\widetilde{\xi_4}+2\widetilde{\xi_5}) - (\rho_2 \otimes{\xi_1} -\rho_1 \otimes{\xi_2})$, $\alpha_5 := \rho_1 \otimes\widetilde{\xi_5}$, 
$\alpha_6 := \rho_2 \otimes{\xi_1} -\rho_1 \otimes{\xi_2}$, 
$\alpha_7 := \rho_2 \otimes \xi_2 -\rho_2 \otimes\xi_1 +\rho_1 \otimes({\xi_5} -\xi_4)$, 
$\alpha_8 := (\rho_2 -\rho_1) \otimes\widetilde{\xi_3}$, 
$\alpha_9 := \rho_2 \otimes (\widetilde{\xi_4}+2\widetilde{\xi_5})+(\rho_1 -\rho_2) \otimes({\xi_1} +{\xi_2})$, 
$\alpha_{10} := (\rho_2 -\rho_1) \otimes\widetilde{\xi_5}$.
\medskip\caption{Base transformation of the character table of $\langle s_i, s_j, s_k \rangle \cong \Delta(2,3,4) = S_4 \times C_2$.}\label{BTCharTableDelta234}
\end{table}

\subsubsection{$\Delta(2,3,5)$} This group has Coxeter presentation
\begin{equation}\label{eqn:PresentationDelta235}
	\Delta(2,3,5) = \left\langle s_i, s_j, s_k \; | \; s_i^2,s_j^2,s_k^2,(s_is_j)^3,(s_is_k)^2,(s_js_k)^5 \right\rangle 
\end{equation}
and it is isomorphic to $A_5 \times C_2$ with Coxeter generators $s_i = (1\ 2) (3\ 5) \alpha$, $s_j = (1\ 2) (3\ 4) \alpha$, $s_k = (1\ 5) (2\ 3) \alpha$, for example. In standard notation, this is the exceptional finite Coxeter group $H_3$.
\begin{remark}\label{rmk:Coxgens}
These Coxeter generators are \emph{not} unique, since $\text{Out}(A_5 \times C_2) \cong C_2$. There are then two sets of Coxeter generators up to conjugation, the other one given by conjugation by a suitable $g \in S_5 \setminus A_5$, for instance, conjugating by $g=(2\ 4)$: $s_i' = (1\ 4) (3\ 5) \alpha$, $s_j' = (1\ 4) (2\ 3) \alpha$, $s_k' = (1\ 5) (3\ 4) \alpha$. In the first case $s_js_k = (1\ 3\ 4\ 2\ 5)$, conjugated to $(1\ 2\ 3\ 4\ 5)$, and in the second case $s_j's_k' = (1\ 3\ 2\ 4\ 5)$, conjugated to $(1\ 3\ 2\ 4\ 5)$, which represent different conjugacy classes in $A_5$. 
\end{remark}
A character table for the alternating group $A_5$ reads as follows.
\[
	\begin{array}{c|ccccc}
		 A_5 & e & (1\ 2\ 3) & (1\ 2) (3\ 4) & (1\ 2\ 3\ 4\ 5) & (1\ 3\ 4\ 5\ 2)\\ 
		 \hline 
		 \xi_1 & 1 &  1 & 1 & 1 & 1\\
		 \xi_2 & 4 &  1 & 0 & -1 & -1\\
		 \xi_3 & 5 &  -1 & 1 & 0 & 0\\
		 \xi_4 & 3 &  0 & -1 & \frac{1+\sqrt{5}}{2} & \frac{1-\sqrt{5}}{2}\\
		 \xi_5 & 3 &  0 & -1 & \frac{1-\sqrt{5}}{2} & \frac{1+\sqrt{5}}{2}\\
	\end{array}
\]
If we call $T_{A_5}$ the matrix of coefficients of this table then the character table of $\Delta(2,3,5)$ is given by Table~\ref{CharTableDelta235}, where we have used the following representatives of the conjugacy classes in terms of the Coxeter generators (`$\sim$' means `conjugate')
\begin{align*}
	e & \ \sim \ e	& \alpha &\ \sim \    (s_i s_j s_k)^5\\
	(1\ 2\ 3) &\ \sim \    s_i s_j 	& (1\ 2\ 3)\alpha &\ \sim \    s_i (s_j s_k)^2\\
	(1\ 2) (3\ 4) &\ \sim \    s_i s_k & (1\ 2) (3\ 4)\alpha &\ \sim \    s_i\\
	(1\ 2\ 3\ 4\ 5) &\ \sim \    s_j s_k & (1\ 2\ 3\ 4\ 5)\alpha  &\ \sim \   s_is_js_k\\
 	(1\ 2\ 3\ 5\ 4) &\ \sim \    (s_i s_j s_k)^4  & (1\ 2\ 3\ 5\ 4)\alpha  &\ \sim \    s_j s_i s_k\\
\end{align*}

\begin{table}[h!]
\small
\[
	\begin{array}{c|ccccc|ccccc}
		 A_5 \times C_2 & e & s_i s_j & s_i s_k & s_j s_k & (s_i s_j s_k)^4 & 
		 (s_i s_j s_k)^5 & s_i (s_j s_k)^2 & s_i & s_i s_j s_k & s_j s_i s_k\\
		 \hline 
		 \rho_1 \otimes \xi_1 &  &   &  &  & & & & &  \\
		 \rho_1 \otimes\xi_2 &  &   &  & & \\
		 \rho_1 \otimes\xi_3 & \multicolumn{5}{c|}{T_{A_5}} & \multicolumn{5}{c}{T_{A_5}}\\
		 \rho_1 \otimes\xi_4 &  &   &  &  & \\
		 \rho_1 \otimes\xi_5 &  &   &  &  & \\
		 \hline
		 \rho_2 \otimes \xi_1 &  &   &  &  & & & & &  \\
		 \rho_2 \otimes\xi_2 &  &   &  & & \\
		 \rho_2 \otimes\xi_3 & \multicolumn{5}{c|}{T_{A_5}} & \multicolumn{5}{c}{-T_{A_5}}\\
		 \rho_2 \otimes\xi_4 &  &   &  &  & \\
		 \rho_2 \otimes\xi_5 &  &   &  &  & 
	\end{array}
\]
\medskip\caption{Character table of $\langle s_i, s_j, s_k \rangle \cong \Delta(2,3,5) = A_5 \times C_2$.}\label{CharTableDelta235}
\end{table}

\begin{remark}
With the non-conjugate set of Coxeter generators $s_i', s_j', s_k'$ (see Remark~\ref{rmk:Coxgens}), we get the analogous set of representatives except swapping the obvious ones, that is, $s_j s_k \sim (s_i' s_j' s_k')^4$, $(s_i s_j s_k)^4 \sim s_j's_k'$, $( s_i s_j s_k)^5 s_j s_k \sim s_i's_j's_k'$ and $s_is_js_k \sim ( s_i' s_j' s_k')^5 s_j' s_k'$. On the character table this amounts to interchanging $\rho_i \otimes \xi_4$ with $\rho_i \otimes \xi_5$ for $i=1$ and $2$.
\end{remark}

For the base transformation required, first note that the character table for the alternating group $A_5$ above can be transformed as follows,
\[
	\begin{array}{c|ccccc}
		 A_5 & e & (1\ 2\ 3) & (1\ 2) (3\ 4) & (1\ 2\ 3\ 4\ 5) & (1\ 3\ 4\ 5\ 2)\\ 
		 \hline 
		          \xi_1                & 1 &  1 & 1 & 1 & 1\\
\widetilde{\xi_2}:=\xi_2 -4\xi_1               & 0 & -3 & -4& -5 & -5\\
\widetilde{\xi_3}:=\xi_3 -\xi_2 -\xi_1         & 0 &  -3 & 0 & 0 & 0\\
\widetilde{\xi_4}:= \xi_4 -\xi_2 +\xi_1        & 0 &  0 & 0 & \frac{5+\sqrt{5}}{2} & \frac{5-\sqrt{5}}{2}\\
\widetilde{\xi_5}:= \xi_5 +\xi_4 -\xi_1 -\xi_3 & 0 &  0 & -4 & 0 & 0\\
	\end{array}
\]
and then
\[  
	\begin{array}{c|ccccc}
		 A_5 & e & (1\ 2\ 3) & (1\ 2) (3\ 4) & (1\ 2\ 3\ 4\ 5) & (1\ 3\ 4\ 5\ 2)\\ 
		 \hline 
		          \xi_1                & 1 &  1 & 1 & 1 & 1\\
\widetilde{\widetilde{\xi_2}}:=\widetilde{\xi_2} -\widetilde{\xi_3} -\widetilde{\xi_5}               & 0 & 0 & 0& -5 & -5\\
\widetilde{\xi_3}        & 0 &  -3 & 0 & 0 & 0\\
\widetilde{\xi_4}       & 0 &  0 & 0 & \frac{5+\sqrt{5}}{2} & \frac{5-\sqrt{5}}{2}\\
\widetilde{\xi_5}& 0 &  0 & -4 & 0 & 0\\
	\end{array}
\]
With this notation, the required base transformation is given in Table~\ref{BTCharTableDelta235}.

\begin{table}[h!]
\footnotesize
\[ 
	\begin{array}{c|ccccc|ccccc}
		 A_5 \times C_2 & e & s_i s_j & s_i s_k & s_j s_k & (s_i s_j s_k)^4 & 
		 (s_i s_j s_k)^5 & s_i (s_j s_k)^2 & s_i & s_i s_j s_k & s_j s_i s_k\\
		 \hline 
\beta_1    & 1 &  1 & 1 & 1 & 1 & 1 &  1 & 1 & 1 & 1\\
\beta_2  & 0 & 0 & 0 & \sqrt{5} & -\sqrt{5}& 0 & 0  & 0& \sqrt{5} & -\sqrt{5}\\
\beta_3  & 0 &  -3 & 0 & 0 & 0 & 0 &  -3 & 0 & 0 & 0\\
\beta_4  & 0 &  0 & 0 & \frac{5+\sqrt{5}}{2} & \frac{5-\sqrt{5}}{2}& 0 &  0 & 0 & \frac{5+\sqrt{5}}{2} & \frac{5-\sqrt{5}}{2}\\
\beta_5  & 0 &  0 & -4 & 0 & 0& 4 &  4 & 0  & 4 & 4\\
\hline
\beta_6    & 0 &  0 & 0 & 0 & 0 & -2& -2 &-2 & -2& -2\\
\beta_7  & 0 & 0  & 0 & 0  & 0 & 0 & 0  & 0 & 10 & 10\\
\beta_8 & 0 &  0  & 0 & 0 & 0 & 0 &  6  & 0 & 0 & 0\\
\beta_9  & 0 &  0 & 0 & 0 & 0& 0 &  0 & 0 &-5-\sqrt{5} & -5+\sqrt{5}\\
\beta_{10} & 0 &  0 & 0 & 0 & 0& -8& -8 & 0 &-8 &-8\\
\end{array}
\]
where 
$\beta_1 :=\rho_1 \otimes \xi_1$,
$\beta_2 :=\rho_1 \otimes \widetilde{\widetilde{\xi_2}} +2\rho_1 \otimes \widetilde{\xi_4}$,
$\beta_3 :=\rho_1 \otimes \widetilde{\xi_3}$,
$\beta_4 :=\rho_1 \otimes \widetilde{\xi_4}$,
$\beta_5 :=\rho_1 \otimes \widetilde{\xi_5} -2 (\rho_2 -\rho_1) \otimes \xi_1$,
$\beta_6 :=(\rho_2 -\rho_1) \otimes \xi_1$,
$\beta_7 :=(\rho_2 -\rho_1) \otimes \widetilde{\widetilde{\xi_2}}$,
$\beta_8 :=(\rho_2 -\rho_1) \otimes \widetilde{\xi_3}$,
$\beta_9 :=(\rho_2 -\rho_1) \otimes \widetilde{\xi_4}$,
$\beta_{10}:=(\rho_2 -\rho_1) \otimes (\widetilde{\xi_5}+4 \xi_1)$.
\medskip\caption{Base transformation of the character table of $\langle s_i, s_j, s_k \rangle \cong \Delta(2,3,5) = A_5 \times C_2$.}\label{BTCharTableDelta235}
\end{table}

\section{Induction homomorphisms}\label{AppendixB}
In this Appendix, we compute all possible induction homomorphisms $R_\mathbb{C} \left( H \right) \to R_\mathbb{C} \left( G \right)$ appearing in the Bredon chain complex (\ref{BredonChainComplex}). That is, $G$ is a finite Coxeter subgroup of $\Gamma$ of rank $n$ generated by $n \le 3$ of the Coxeter generators (\ref{CoxeterPresentation}), and $H$ is a subgroup of $G$ generated by a subset of exactly $n-1$ of those Coxeter generators. 

We give explicit induction homomorphisms with respect to the standard character tables, and also with respect to the transformed bases in Appendix~\ref{AppendixA} for rank 3 subgroups, as this is needed for the simultaneous base transformation argument in the proof that $\hzero$ is torsion-free (Section~\ref{sec:H0-algebra}). 

We implicitly use the character tables and notation in Appendix~\ref{AppendixA}, and Frobenius reciprocity~\cite{Serre77}, throughout this Appendix. We also note that Frobenius reciprocity extends linearly:
\begin{lemma}
If $H$ is a subgroup of a finite group $G$ and $\phi$ and $\eta$, respectively $\tau$ and $\pi$, are representations of $G$, respectively $H$, then
$$
	(\phi \downarrow + \xi \downarrow | \tau +\pi) = (\phi + \xi  | \tau\uparrow +\pi\uparrow)\,.
$$
\end{lemma}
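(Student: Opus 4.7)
The plan is to reduce the identity to the classical Frobenius reciprocity $(\phi\downarrow | \tau) = (\phi | \tau\uparrow)$ (valid for a single $G$-representation $\phi$ and a single $H$-representation $\tau$), combined with two elementary facts: bilinearity of the Hermitian inner product on class functions, and additivity of restriction and induction on characters. Since the statement involves only four representations, the argument is purely formal expansion.

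First I would expand the left-hand side using bilinearity of the inner product in both arguments:
\[
(\phi\downarrow + \xi\downarrow \mid \tau + \pi) \;=\; (\phi\downarrow\mid \tau) + (\phi\downarrow\mid \pi) + (\xi\downarrow\mid \tau) + (\xi\downarrow\mid \pi).
\]
Next I would apply the classical Frobenius reciprocity to each of the four terms individually, obtaining
\[
(\phi\mid \tau\uparrow) + (\phi\mid \pi\uparrow) + (\xi\mid \tau\uparrow) + (\xi\mid \pi\uparrow).
\]
Finally I would refactor by bilinearity in the reverse direction to recover
\[
(\phi + \xi \mid \tau\uparrow + \pi\uparrow),
\]
which is the right-hand side.

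There is no real obstacle here; the only subtlety is notational, namely confirming that the symbols $\downarrow$ and $\uparrow$ denote the restriction and induction of characters and that these operations are $\mathbb{Z}$-linear on the representation ring, which is standard. Because the inner product is already known to be bilinear on class functions, the entire proof fits in the three displayed lines above, and no further case analysis or computation with explicit character tables is required.
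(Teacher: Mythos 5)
Your proof is correct and follows essentially the same route as the paper: both expand the inner product into the four cross terms, apply classical Frobenius reciprocity to each, and reassemble. The only cosmetic difference is that the paper writes out the inner product as an explicit sum over group elements, whereas you invoke bilinearity of the inner product on class functions directly.
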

\begin{proof} 
\begin{align*}
(\phi \downarrow + \xi \downarrow | \tau +\pi)
 =& \frac{1}{|H|}\sum\limits_{h\in H} (\phi\downarrow + \xi\downarrow)(h) \cdot \overline{(\tau + \pi)(h)}\\
 =& \frac{1}{|H|}\sum\limits_{h\in H} (\phi\downarrow\cdot\overline{\tau} +\xi\downarrow\cdot\overline{\tau} +\phi\downarrow\cdot\overline{\pi} +\xi\downarrow\cdot \overline{\pi})(h) 
\end{align*}
which by Frobenius reciprocity on irreducible characters equals
\[
\frac{1}{|G|}\sum\limits_{g\in G} (\phi\cdot\overline{\tau}\uparrow +\xi\cdot\overline{\tau}\uparrow +\phi\cdot\overline{\pi}\uparrow +\xi \cdot \overline{\pi}\uparrow)(h) = (\phi + \xi  | \tau\uparrow +\pi\uparrow)\,. \qedhere
\]
\end{proof}

\subsection{Rank 1} The only induction homomorphism in this case is 
$$
	R_\mathbb{C} \left(\{ e \}\right) \to R_\mathbb{C} \left(\langle s_i \rangle\right)
$$
which must the regular representation $\tau \mapsto \rho_1 + \rho_2$ shown, in terms of free abelian groups, in Figure~\ref{InductionC2}.
\begin{figure}[h!]
\begin{eqnarray*}
	R_\mathbb{C}(\{ e \}) &\to& R_\mathbb{C}(C_2)\\
	\mathbb{Z} &\to & \mathbb{Z}^2 \\
	a &\mapsto & (a,a)
\end{eqnarray*}
\caption{Induction homomorphism from $H=\{ e \}$ to $G =\langle s_i \rangle \cong C_2$.}\label{InductionC2}
\end{figure}

\subsection{Rank 2} \label{Rank2inducedMaps} In this case $G$ is a dihedral group with the presentation
\[
	G = \langle s_i, s_j \; | \; s_i^2=s_j^2=(s_is_j)^m \rangle\,, 
\]
where $m=m_{ij}$ and we assume $i < j$. Consider first the case $H = \langle s_i \rangle$. The characters of $D_m$ (Table~\ref{CharTableC2xC2}) restricted to $H$ are (note that $s_i = s_j(s_is_j)^{m-1}$)
\[
	\begin{array}{c|rr}
		 D_m \downarrow & e & s_i \\ 
		 \hline 
		 \chi_1 & 1 & 1\\
		 \chi_2 & 1 & -1\\
		 \widehat{\chi_3} & 1 & -1\\
		 \widehat{\chi_4} & 1 & 1\\
		 \phi_p & 2 & 0
	\end{array}
\]
Multiplying with the rows of the character table of $H \cong C_2$ (Table~\ref{CharTableC2}) we obtain the induced representations
\[
	\begin{array}{rcl}
			\rho_1 \uparrow & = & \chi_1 + \widehat{\chi_4} + \sum \phi_p ,\\
			\rho_2 \uparrow & = & \chi_2 + \widehat{\chi_3} + \sum \phi_p .
	\end{array}
\]
The other case is when $H = \langle s_j \rangle$. This is analogous, but note that, in order to keep the notation consistent with Table~\ref{CharTableDm}, the characters $\chi_3$ and $\chi_4$ must be interchanged in the even case. Specifically, we have now $s_j = s_j(s_is_j)^0$ and hence
\[
	\begin{array}{c|rr}
		 D_m \downarrow & e & s_j \\ 
		 \hline 
		 \chi_1 & 1 & 1\\
		 \chi_2 & 1 & -1\\
		 \widehat{\chi_3} & 1 & 1\\
		 \widehat{\chi_4} & 1 & -1\\
		 \phi_p & 2 & 0
	\end{array}
\]
and therefore
\[
	\begin{array}{rcl}
			\rho_1 \uparrow & = & \chi_1 + \widehat{\chi_3} + \sum \phi_p ,\\
			\rho_2 \uparrow & = & \chi_2 + \widehat{\chi_4} + \sum \phi_p .
	\end{array}
\]
All in all, as maps of free abelian groups, we have that the induction homomorphisms $R_\mathbb{C} \left( H \right) \to R_\mathbb{C} \left( G \right)$ shown in Figure~\ref{InductionDm}.
\begin{figure}[h!]
\begin{eqnarray*}
	R_\mathbb{C}(H) &\to& R_\mathbb{C}(G)\\
	\mathbb{Z}^2 &\to & \mathbb{Z}^{c(D_{m})} \\
	(a,b) &\mapsto & (a,b,\widehat{b},\widehat{a},a+b,\ldots , a+b) \quad \text{for } H = \langle s_i \rangle,\\
	(a,b) &\mapsto & (a,b,\widehat{a},\widehat{b},a+b,\ldots , a+b) \quad \text{for } H = \langle s_j \rangle,
\end{eqnarray*}
\caption{Induction homomorphisms from $H$ to $G =\langle s_i, s_j \rangle \cong D_{m}$, $m=m_{ij}$ and $i<j$.}
\label{InductionDm}
\end{figure}

\subsection{Rank 3} 
We compute each case individually.

\subsubsection{$G = \Delta(2,2,2)$} This group is isomorphic to $C_2 \times C_2 \times C_2$, and has Coxeter generators $s_i$, $s_j$, $s_k$ where $i < j < k$. We compute the induction homomorphisms for the Coxeter subgroups $\langle s_i, s_j \rangle$, $\langle s_i, s_k \rangle$ and $\langle s_j, s_k \rangle$, all three direct factors of $G$ and isomorphic to $C_2 \times C_2$. Using the bases of $R_\mathbb{C}(C_2 \times C_2)$ and $R_\mathbb{C}(\Delta(2,2,2))$ induced from $C_2$ (Tables~\ref{CharTableC2xC2} and~\ref{CharTableC2xC2xC2}), we immediately obtain the induction homomorphisms shown, as maps of free abelian groups, shown in Figure~\ref{InductionDelta222}.
\begin{remark}
Recall how to the induction homomorphism works from a group $A$ to direct product $A \times B$: if $\rho$ is a representation of $A$ then $\text{Ind}_A^{A\times B}(\rho) = \rho \otimes r_B$, where $r_B$ is the regular representation of $B$. 
\end{remark}
\begin{figure}[h!]
\begin{eqnarray*}
	R_\mathbb{C}(H) &\to & R_\mathbb{C}(G)\\
	\mathbb{Z}^4 &\to & \mathbb{Z}^{8} \\
	(a,b,c,d) &\mapsto & (a,a,b,b,c,c,d,d) \quad \text{for } H = \langle s_i, s_j \rangle,\\
	(a,b,c,d) &\mapsto & (a,b,a,b,c,d,c,d) \quad \text{for } H = \langle s_i, s_k \rangle,\\
	(a,b,c,d) &\mapsto & (a,b,c,d,a,b,c,d) \quad \text{for } H = \langle s_j, s_k \rangle.
\end{eqnarray*}
\caption{Induction homomorphisms from $H$ to $G =\langle s_i, s_j, s_k \rangle \cong \Delta(2,2,2)=C_2\times C_2 \times C_2$, and $i<j<k$.}\label{InductionDelta222}
\end{figure}

On the other hand, with respect to the transformed bases (Tables~\ref{CharTableD2} and~\ref{BTCharTableC2xC2xC2} in Appendix~\ref{AppendixA}), the induction homomorphisms take the form shown in Tables~\ref{sjskToC2xC2xC2}, \ref{siskToC2xC2xC2}, and~\ref{D2toC2xC2xC2}. Note that, for the transformed bases, we show the restricted characters, and the induced map, in two adjacent tables separated by double vertical bars. 

\begin{table}[!htb]
\small
\[
	\begin{array}{c|rrrr||rrrr}
		\langle  s_j, s_k \rangle  \hookrightarrow C_2 \times C_2 \times C_2 
						    & e & s_j & s_k &  s_js_k  
& (\cdot | \sum \chi_i) & (\cdot | \chi_2+{\chi_3}) & (\cdot | {\chi_3}+{\chi_4}) & (\cdot | {\chi_4}) \\
		 \hline 
					 \rho_{111}\downarrow & 1 &   1 & 1   &  1  & 1 & 0 & 0 & 0\\
                             \rho_{112} -\rho_{111}\downarrow & 0 &   0 & -2  & -2 & 0 & 1 & 0  & 0\\
                             \rho_{121} -\rho_{111}\downarrow & 0 &  -2 & 0   & -2 & 0 & 0 & 1 & 1\\
                             \rho_{122} -\rho_{121}\downarrow & 0 &   0 & -2  &  2 & 0 & 1 & 0 & -1\\
                             \rho_{211} -\rho_{111}\downarrow & 0 &   0 & 0   &  0 & 0 & 0 & 0 & 0\\
                             \rho_{212} -\rho_{211}\downarrow & 0 &   0 & -2  & -2 & 0 & 1 & 0 & 0\\
                             \rho_{221} -\rho_{121}\downarrow & 0 &   0 & 0   &  0 & 0 & 0 & 0 & 0\\
                             \rho_{222} -\rho_{221}\downarrow & 0 &   0 & -2  &  2 & 0 & 1 & 0 & -1\\
	\end{array}
\] 
\medskip\caption{Restricted characters and induced map {$\langle s_j,s_k \rangle \hookrightarrow \Delta(2,2,2) = C_2 \times C_2 \times C_2$.}
}\label{sjskToC2xC2xC2}
\end{table}

\begin{table}[!htb]
\small
\[
	\begin{array}{c|rrrr||rrrr}
   \langle s_i,s_k \rangle \hookrightarrow C_2 \times C_2 \times C_2 
						    & e & s_i& s_k &s_is_k
& (\cdot | \sum \chi_i) & (\cdot | \chi_2+{\chi_3}) & (\cdot | {\chi_3}+{\chi_4}) & (\cdot | {\chi_4})
\\ 
		 \hline 
					 \rho_{111}\downarrow & 1 &  1 &  1  &   1  & 1 & 0 & 0 & 0\\
                             \rho_{112} -\rho_{111}\downarrow & 0 &  0 &  -2 &   -2 & 0 & 1 & 0 & 0\\
                             \rho_{121} -\rho_{111}\downarrow & 0 &  0 &   0 &    0 & 0 & 0 & 0 & 0\\
                             \rho_{122} -\rho_{121}\downarrow & 0 &  0 &  -2 &   -2 & 0 & 1 & 0 & 0\\
                             \rho_{211} -\rho_{111}\downarrow & 0 &  -2&  0  &   -2 & 0 & 0 & 1 & 1\\
                             \rho_{212} -\rho_{211}\downarrow & 0 &  0 &  -2 &   2  & 0 & 1 & 0 & -1\\
                             \rho_{221} -\rho_{121}\downarrow & 0 &  -2&   0 &   -2 & 0 & 0 & 1 & 1\\
                             \rho_{222} -\rho_{221}\downarrow & 0 &  0 &  -2 &    2 & 0 & 1 & 0 & -1\\
	\end{array} 
\] 
\medskip\caption{Restricted characters and induced map {
$\langle s_i,s_k \rangle \hookrightarrow \Delta(2,2,2) 
= C_2 \times C_2 \times C_2$.}
}\label{siskToC2xC2xC2}
\end{table}

\begin{table}[!htb]
\small
\[
	\begin{array}{c|rrrr||rrrr}
		 \langle s_i,s_j \rangle \hookrightarrow C_2 \times C_2 \times C_2 & e & s_i & s_j & s_is_j
		 & (\cdot | \sum \chi_i) & (\cdot | \chi_2+{\chi_3}) & (\cdot | {\chi_3}+{\chi_4}) & (\cdot | {\chi_4}) \\
		 \hline 
                          \rho_{111}\downarrow                 & 1 &  1  & 1  & 1  & 1 & 0 & 0 & 0\\
                 (\rho_{112} -\rho_{111})\downarrow            & 0 &  0  & 0  & 0  & 0 & 0 & 0 & 0\\
                 (\rho_{121} -\rho_{111})\downarrow            & 0 &  0  & -2 & -2 & 0 & 1 & 0 & 0\\
                 (\rho_{122} -\rho_{121})\downarrow            & 0 &  0  & 0  &  0 & 0 & 0 & 0 & 0\\
                 (\rho_{211} -\rho_{111})\downarrow            & 0 &  -2 & 0  & -2 & 0 & 0 & 1 & 1\\
                 (\rho_{212} -\rho_{211})\downarrow            & 0 &   0 & 0  &  0 & 0 & 0 & 0 & 0\\
                 (\rho_{221} -\rho_{121})\downarrow            & 0 &  -2 & 0  & 2  & 0 & 0 & 1 & 0\\
                 (\rho_{222} -\rho_{221})\downarrow            & 0 &   0 &  0 & 0  & 0 & 0 & 0 & 0\\
	\end{array} 
\] 
\medskip\caption{Restricted characters and induced map {$D_2 \cong \langle s_i, s_j \rangle \hookrightarrow \Delta(2,2,2) = C_2 \times C_2 \times C_2$.} 
}\label{D2toC2xC2xC2}
\end{table}

\subsubsection{$G = \Delta(2,2,m)$, $m>2$} \label{sec:Delta(2,2,m)}
This group is isomorphic to $C_2 \times D_m$ with Coxeter presentation 
\begin{equation*}
		\Delta(2,2,m) = \left\langle s_i, s_j, s_k \; | \; s_i^2,s_j^2,s_k^2,(s_is_j)^2,(s_is_k)^2,(s_js_k)^m \right\rangle\,,
\end{equation*}
We have three relevant Coxeter subgroups $H$, which we treat separately. In each case, we restrict the characters of $G$ (Table~\ref{CharTableDelta22n}) to the subgroup $H$ and then use Frobenius reciprocity to write the induced characters of $H$ into $G$ in terms of the characters of $G$.

\medskip

\noindent\underline{Case 1: $H = \langle s_i, s_j \rangle \cong D_2 = C_2 \times C_2$}.\\[1ex]
The elements $e, s_i, s_j$ and $s_i s_j$ of $H$ are obtained from the 1st, 3rd, 2nd and 4th column of Table~\ref{CharTableDelta22n} for $r$ equals $0, 0, n-1$ and $n-1$ respectively, giving the restrictions (by abuse of notation we indicate the restricted character with the same symbols):
\[
	\begin{array}{c|rr|rr}
		 C_2 \times D_m \downarrow & e & s_i & s_j & s_i s_j\\ 
		 \hline 
		 \rho_1 \otimes \chi_1 & 1 & 1 & 1 & 1\\
		 \rho_1 \otimes \chi_2 & 1 & 1 & -1 & -1\\
		 \rho_1 \otimes \widehat{\chi_3} & 1 & 1 & -1 & -1\\
		 \rho_1 \otimes \widehat{\chi_4} & 1 & 1 & 1 & 1\\
		 \rho_1 \otimes \phi_p & 2 & 2 & 0 & 0\\
		 \hline 
		 \rho_2 \otimes \chi_1 & 1 & -1 & 1 & -1\\
		 \rho_2 \otimes \chi_2 & 1 & -1 & -1 & 1\\
		 \rho_2 \otimes \widehat{\chi_3} & 1 & -1 & -1 & 1\\
		 \rho_2 \otimes \widehat{\chi_4} & 1 & -1 & 1 & -1\\
		 \rho_2 \otimes \phi_p & 2 & -2 & 0 & 0\\
	\end{array}
\]
Now we use Frobenius reciprocity (multiply the rows of the character tables of $H$ with those of $G \downarrow H$ above, and divide by $|H|=4$) to obtain the coefficients of the induced irreducible representations of $H$ in $G$ in terms of the irreducible representations of $G$. If $i < j$ then the character table of $H$ is the one given in Table~\ref{CharTableC2xC2}, but if $j <i$ we should interchange the second and third characters (rows) in that table. This gives 
\[
	\begin{array}{rcl}
			\rho_1 \otimes \rho_1 \uparrow & = & \rho_1 \otimes \chi_1 + \widehat{\rho_1 \otimes \chi_4} + \sum \rho_1 \otimes \phi_p\\
			\rho_1 \otimes \rho_2 \uparrow & = & \rho_1 \otimes \chi_2 + \widehat{\rho_1 \otimes \chi_3} + \sum \rho_1 \otimes \phi_p\\
			\rho_2 \otimes \rho_1 \uparrow & = & \rho_2 \otimes \chi_1 + \widehat{\rho_2 \otimes \chi_4} + \sum \rho_2 \otimes \phi_p\\
			\rho_2 \otimes \rho_2 \uparrow & = & \rho_2 \otimes \chi_2 + \widehat{\rho_2 \otimes \chi_3} + \sum \rho_2 \otimes \phi_p
	\end{array}
\]
if $i<j$, and
\[
	\begin{array}{rcl}
			\rho_1 \otimes \rho_1 \uparrow & = & \rho_1 \otimes \chi_1 + \widehat{\rho_1 \otimes \chi_4} + \sum \rho_1 \otimes \phi_p\\
			\rho_1 \otimes \rho_2 \uparrow & = & \rho_2 \otimes \chi_1 + \widehat{\rho_2 \otimes \chi_4} + \sum \rho_2 \otimes \phi_p\\
			\rho_2 \otimes \rho_1 \uparrow & = & \rho_1 \otimes \chi_2 + \widehat{\rho_1 \otimes \chi_3} + \sum \rho_1 \otimes \phi_p\\
			\rho_2 \otimes \rho_2 \uparrow & = & \rho_2 \otimes \chi_2 + \widehat{\rho_2 \otimes \chi_3} + \sum \rho_2 \otimes \phi_p
	\end{array}
\]
if $j < i$. Equivalently, as a map of free abelian groups, we have the homomorphisms shown in Figure~\ref{InductionDelta22mCase1}.
\begin{figure}[h!]
\begin{eqnarray*}
	R_\mathbb{C}\left(H \right) &\to& R_\mathbb{C}\left( G \right)\\
	\mathbb{Z}^4 &\to & \mathbb{Z}^{2\cdot c(D_{m})}\\
	(a,b,c,d) &\mapsto & (a,b,\widehat{b},\widehat{a},a+b,\ldots, a+b,\ c,d,\widehat{d},\widehat{c}, c+d,\ldots, c+d) \ \ \text{if } i < j,\\
	(a,b,c,d) &\mapsto & (a,c,\widehat{c},\widehat{a},a+c,\ldots, a+c,\ b,d,\widehat{d},\widehat{b}, b+d,\ldots, b+d) \ \ \text{if } j < i.
\end{eqnarray*}
\caption{Induction homomorphisms from $H = \langle s_i, s_j \rangle \cong C_2 \times C_2$ to $G =\langle s_i, s_j, s_k \rangle \cong \Delta(2,2,m)=C_2\times D_m$, $m=m_{jk}>2$, and $j<k$.}\label{InductionDelta22mCase1}
\end{figure}

\medskip

\noindent\underline{Case 2:  $H = \langle s_i, s_k \rangle \cong C_2 \times C_2$}.\\[1ex]
Restricting Table~\ref{CharTableDelta22n} to $H$ (1st, 3rd, 2nd and 4th column for $k$ equals 0) we get
\[
	\begin{array}{c|rr|rr}
		 C_2 \times D_m \downarrow & e & s_i & s_k & s_i s_k\\ 
		 \hline 
		 \rho_1 \otimes \chi_1 & 1 & 1 & 1 & 1\\
		 \rho_1 \otimes \chi_2 & 1 & 1 & -1 & -1\\
		 \rho_1 \otimes \widehat{\chi_3} & 1 & 1 & 1 & 1\\
		 \rho_1 \otimes \widehat{\chi_4} & 1 & 1 & -1 & -1\\
		 \rho_1 \otimes \phi_p & 2 & 2 & 0 & 0\\
		 \hline 
		 \rho_1 \otimes \chi_1 & 1 & -1 & 1 & -1\\
		 \rho_1 \otimes \chi_2 & 1 & -1 & -1 & 1\\
		 \rho_1 \otimes \widehat{\chi_3} & 1 & -1 & 1 & -1\\
		 \rho_1 \otimes \widehat{\chi_4} & 1 & -1 & -1 & 1\\
		 \rho_1 \otimes \phi_p & 2 & -2 & 0 & 0\\
	\end{array}
\]
Via Frobenius reciprocity we obtain, assumming first $i < k$,
\[
	\begin{array}{rcll}
			& & \rho_1 \otimes \underline{\phantom{a}} & \rho_2 \otimes \underline{\phantom{a}}\\[1ex]
			\rho_1 \otimes \rho_1 \uparrow & = & \rho_1 \otimes \chi_1 + \widehat{\rho_1 \otimes \chi_3} + \sum \rho_1 \otimes \phi_p\\
			\rho_1 \otimes \rho_2 \uparrow & = & \rho_1 \otimes \chi_2 + \widehat{\rho_1 \otimes \chi_4} + \sum \rho_1 \otimes \phi_p\\
			\rho_2 \otimes \rho_1 \uparrow & = & & \rho_2 \otimes \chi_1 + \widehat{\rho_2 \otimes \chi_3} + \sum \rho_2 \otimes \phi_p\\
			\rho_2 \otimes \rho_2 \uparrow & = & & \rho_2 \otimes \chi_2 + \widehat{\rho_2 \otimes \chi_4} + \sum \rho_2 \otimes \phi_p
	\end{array}
\]
If $k < i$, the calculation is the same but we must interchange again the 2nd and 3rd generators. All in all, we have the homomorphisms shown in Figure~\ref{InductionDelta22mCase2} as maps between free abelian groups. 

\begin{figure}[h!]
\begin{eqnarray*}
	R_\mathbb{C}\left( H \right) &\to& R_\mathbb{C}\left( G \right)\\
	\mathbb{Z}^4 &\to & \mathbb{Z}^{2\cdot c(D_{m})}\\
	(a,b,c,d) &\mapsto& (a,b,\widehat{a},\widehat{b},a+b,\ldots, a+b,\ c,d,\widehat{c},\widehat{d}, c+d,\ldots, c+d)\ \ \text{if } i < k,\\
	(a,b,c,d) &\mapsto& (a,c,\widehat{a},\widehat{c},a+c,\ldots, a+c,\ b,d,\widehat{b},\widehat{d}, b+d,\ldots, b+d)\ \ \text{if } k < i.\\
\end{eqnarray*}
\caption{Induction homomorphisms from $H = \langle s_i, s_k \rangle \cong C_2 \times C_2$ to $G =\langle s_i, s_j, s_k \rangle \cong \Delta(2,2,m)=C_2\times D_m$, $m=m_{jk}>2$, and $j<k$.}\label{InductionDelta22mCase2}
\end{figure}

\medskip

\noindent\underline{Case 3: $H = \langle s_j, s_k \rangle \cong D_m$}\\[1ex]
In this case the condition $j < k$ already holds. Restricting Table~\ref{CharTableDelta22n} to $H$ (the first two columns)
\[
	\begin{array}{c|cc}
		 C_2 \times D_m \downarrow & (s_js_k)^r & s_k(s_js_k)^r\\ 
		 \hline 
		 \rho_1 \otimes \chi_1  \\
		 \rho_1 \otimes \chi_2 \\
		 \rho_1 \otimes \widehat{\chi_3} & \multicolumn{2}{c}{T_m} \\
		 \rho_1 \otimes \widehat{\chi_4} \\
		 \rho_1 \otimes \phi_p \\
		 \hline 
		 \rho_2 \otimes \chi_1 \\
		 \rho_2 \otimes \chi_2 \\
		 \rho_2 \otimes \widehat{\chi_3} & \multicolumn{2}{c}{T_m} \\
		 \rho_2 \otimes \widehat{\chi_4} \\
		 \rho_2 \otimes \phi_p \\
	\end{array}
\]
where $T_m$ are the coefficients of the character table of $D_m$ as in Table~\ref{CharTableDm}. This immediately gives
\[
	\begin{array}{rcl}
			\chi_i \uparrow & = & \rho_1 \otimes \chi_i + \rho_2 \otimes \chi_i \quad \text{for all $i$, and}\\
			\phi_p \uparrow & = & \rho_1 \otimes \phi_i + \rho_2 \otimes \phi_p\quad \text{for all $p$.}\\
	\end{array}
\]
Equivalently, as a map of free abelian groups, this induction homomorphism is the one given in Figure~\ref{InductionDelta22mCase3}.
\begin{figure}[h!]
\begin{eqnarray*}
	R_\mathbb{C}\left( H \right) &\to& R_\mathbb{C}\left( G \right)\\
	\mathbb{Z}^{c(D_{m})} &\to & \mathbb{Z}^{2\cdot c(D_{m})} \\
	(a,b,\widehat{c},\widehat{d}, r_1, \ldots, r_N) &\mapsto & (a,b,\widehat{c},\widehat{d}, r_1, \ldots, r_N, \ a,b,\widehat{c},\widehat{d}, r_1, \ldots, r_N).
\end{eqnarray*}
\caption{Induction homomorphisms from $H = \langle s_j, s_k \rangle \cong D_m$ to $G =\langle s_i, s_j, s_k \rangle \cong \Delta(2,2,m)=C_2\times D_m$, $m=m_{jk}>2$, and $j<k$.}\label{InductionDelta22mCase3}
\end{figure}

Finally, we compute the induction homomorphisms with respect to the transformed basis (Tables~\ref{CharTableD2}, \ref{CharTableDm m odd}, \ref{CharTableDm m even}, \ref{CharTableDmxC2 m odd}, \ref{CharTableDmxC2 m even}, \ref{CharTableDmxC2 m power of 2} in Appendix~\ref{AppendixA}), summarising the results in Tables~\ref{MapDmtoDmxC2}
to~\ref{MapD2toDmxC2 m power of 2}.

\begin{table}[!htb] 
\small
\[ 	
\begin{array}{c|cc||ccccc}
D_m \hookrightarrow D_m \times C_2 	&  s_i & (s_i s_j)^r 
	& \alpha_1 
	& \alpha_2 
	& \alpha_3
	& \beta_k\\
\hline 
\rho_1 \otimes 	 \chi_1  \downarrow			& 1 & 1		& 1 & 0 & 0 &0\\
\rho_1 \otimes 	( \chi_2 -\chi_1)  \downarrow   & -2 & 0 		& 0 & 1 & 0 &0\\
\rho_1 \otimes 	(\phi_1 -\chi_2 -\chi_1)  \downarrow 
	& 0 & b_r		& 0 & 0 & 1 & 0\\
\vdots\quad\rho_1\otimes(\phi_p -\phi_{p-1})\downarrow\quad\vdots 
	& 0 &a_{p,r}  					&0&0& 0 & \delta_{p,k} \\
\hline
(\rho_2 -\rho_1) \otimes 	 \chi_1	 \downarrow		  
	& 0 &  0          
	& 0 & 0 & 0 &0 \\
(\rho_2 -\rho_1) \otimes ( \chi_2 -\chi_1) \downarrow		  
	& 0 &  0          
	& 0 & 0 & 0 &0 \\
(\rho_2 -\rho_1) \otimes (\phi_1 -\chi_2 -\chi_1) \downarrow	  
	& 0 &  0           
	& 0 & 0 & 0 &0 \\
\vdots \quad (\rho_2 -\rho_1) \otimes (\phi_p -\phi_{p-1} ) \downarrow \quad \vdots
	& 0 &  0        
	& 0 & 0 & 0 &0 \\
\end{array}
\]
\text{where }
$\alpha_1 := \sum\limits_{\ell = 1}^2 \chi_\ell +2\sum\limits_{\ell =1}^{(m-1)/2} \phi_\ell$,
$\alpha_2 := \chi_2 +\sum\limits_{\ell =1}^{(m-1)/2} \phi_\ell,$
$\alpha_3 := \sum\limits_{\ell =1}^{(m-1)/2} \phi_\ell$,
$\beta_k :=  \sum\limits_{\ell =k}^{(m-1)/2} \phi_\ell $.
\medskip	 
\caption{Restricted characters and map induced by $D_m \hookrightarrow \Delta(2,2,m) = D_m \times C_2$ for $m\geq 3$ odd. Here $a_{p,r} := 2\cos(\frac{2\pi pr}{m})-2\cos(\frac{2\pi (p-1)r}{m})$, $b_r := 2\cos(\frac{2\pi r}{m})-2$, $\delta_{p,k}$ the Kronecker delta, $2 \le p, k \le \frac{m-1}{2}$ and $0 \le r \le \frac{m-1}{2}$.}
\label{MapDmtoDmxC2}
\end{table}

\begin{table}[!htb]
\small
	\[ 	
	\begin{array}{c|cccc||cccc}
D_2 \hookrightarrow D_m \times C_2 & e & s_i & \alpha & \alpha s_i
& \sum \chi_i & \chi_2 +\widehat{\chi_3} & \widehat{\chi_3}  & \widehat{\chi_3}+\widehat{\chi_4}\\ 
		 \hline 
\rho_1 \otimes 	 \chi_1  \downarrow				  & 1 &  1 & 1  & 1                 & 1 &  0 &  0 & 0\\
\rho_1 \otimes 	( \chi_2 -\chi_1)  \downarrow                     & 0 & -2 & 0  & -2	 	    & 0 &  0 &  0 & 1\\
\rho_1 \otimes 	(\phi_1 -\chi_2 -\chi_1)  \downarrow		  & 0 &  0 & 0  & 0                 & 0 &  0 &  0 & 0\\
\vdots\quad\rho_1\otimes(\phi_p -\phi_{p-1})\downarrow\quad\vdots& 0 &  0 & 0  & 0                 & 0 &  0 &  0 & 0 \\
\hline
(\rho_2 -\rho_1) \otimes 	 \chi_1	 \downarrow		  & 0 &  0 & -2 & -2                & 0 &  1 &  0 & 0\\
(\rho_2 -\rho_1) \otimes ( \chi_2 -\chi_1) \downarrow		  & 0 &  0 & 0  & 4   		    & 0 &  0 &  1 &0\\
(\rho_2 -\rho_1) \otimes (\phi_1 -\chi_2 -\chi_1) \downarrow	  & 0 &  0 & 0  & 0                 & 0 &  0 &  0 &0\\
\vdots\quad
(\rho_2 -\rho_1)\otimes(\phi_p -\phi_{p-1})\downarrow\quad\vdots & 0 &  0 & 0  & 0                 & 0 &  0 &  0 &0
\end{array}
\]
\medskip 
\caption{Restricted characters and map induced by the two inclusions $D_2 \hookrightarrow \Delta(2,2,m) = D_m \times C_2$, for $m \geq 3$ odd. Here $2 \leq p\leq \frac{m-1}{2}$.}
\label{MapD2toDmxC2}
\end{table}

\begin{table}[!htb] 
\footnotesize
\[
\begin{array}{c|ccc||cccccc}
D_m \hookrightarrow  D_m \times C_2 & s_i & (s_i s_j)^r  &  s_j s_i s_j 
& \alpha_1 
& \alpha_2
& \alpha_3
& \alpha_4
&  \alpha_5
& \beta_k \\ 
\hline 
\rho_1 \otimes 	 \chi_1 \downarrow
	 &  0              & 0			             & 1 &  1    &  0   & 0           &0     &0       & 0     \\
\rho_1 \otimes 	( \chi_2 -\chi_1)        \downarrow
	    &  -2  & 0                  & -2&  0    &  1   & 0           &1     &0            &0\\
\rho_1 \otimes 	( \chi_3 -\chi_2)        \downarrow
	    &  0   & c_r   & -2&  0    &  0   & 1           &-1    &0            &0\\
\rho_1 \otimes 	( \chi_4 -\chi_1)        \downarrow
	    &  -2  & c_r   & 0&  0    &  0   & 0           &1     &0            &0\\
\rho_1 \otimes 	(\phi_1 -\chi_3 -\chi_1) \downarrow
	    &  -2  & b_r & 0&  0    &  0   & 0           &1     &1            &0\\ 
\vdots \quad\rho_1 \otimes 	(\phi_p -\phi_{p-1} )\downarrow   \quad \vdots 	
	    &  0    & a_{p,r}   &  0&  0    &  0   & 0           &0     &0       & \delta_{p,k}\\ 
\hline
(\rho_2 -\rho_1) \otimes 	 \chi_1			  \downarrow  &  &  & \\
(\rho_2 -\rho_1) \otimes ( \chi_2 -\chi_1)		  \downarrow  &  &  & \\
(\rho_2 -\rho_1) \otimes ( \chi_3 -\chi_2)		  \downarrow  &  & \mathbf{0} &  & \multicolumn{6}{c}{\mathbf{0}}\\
(\rho_2 -\rho_1) \otimes (\chi_4 +\chi_3 -\chi_2 -\chi_1) \downarrow  &  &  & \\
(\rho_2 -\rho_1) \otimes (\phi_1 -\chi_2 -\chi_1)	  \downarrow &  &  & \\
\vdots \quad (\rho_2 -\rho_1) \otimes (\phi_p -\phi_{p-1} )  \downarrow \quad \vdots &  &  & \\
	\end{array} 
\]
where 
$\alpha_1 := \sum\limits_{\ell = 1}^4 \chi_\ell +2\sum\limits_p^{\frac{m}{2}-1}\phi_p$,
$\alpha_2 := \chi_2 +\chi_3+\sum\limits_{\ell=1}^{\frac{m}{2}-1}\phi_\ell $,
$\alpha_3 := \chi_3+\sum\limits_{\ell=1}^{\frac{m}{2}-1}\phi_\ell $,
$\alpha_4 := \chi_2 +\chi_4+\sum\limits_{\ell=1}^{\frac{m}{2}-1}\phi_\ell  $,
$\alpha_5 := \sum\limits_{\ell = 1}^{\frac{m}{2}-1}\phi_\ell$,
$\beta_k := \sum\limits_{\ell = k}^{\frac{m}{2}-1}\phi_\ell $.
\medskip
\caption{Restricted characters and map induced by the inclusion $D_m \hookrightarrow \Delta(2,2,m) = D_m \times C_2$, for $m \geq 6$ even, not a power of $2$. Here $a_{p,r} := 2\cos(\frac{2\pi pr}{m})-2\cos(\frac{2\pi (p-1)r}{m})$, $b_r := 2\cos(\frac{2\pi r}{m})-(-1)^r-1$, $c_r := (-1)^r -1$,  $\delta_{p,k}$ the Kronecker delta, $2 \leq p, k \leq \frac{m}{2}-1$, and $0 \le r \le \frac{m}{2}$.}
\label{MapDmtoDmxC2 m even}
\end{table}

\begin{table}[!htb]
\footnotesize
\[
	\begin{array}{c|cccc||cccc}
D_2 \hookrightarrow D_m \times C_2 &  e & s_j s_i s_j & \alpha    &\alpha s_j s_i s_j  
& \sum \chi_i & \chi_2 +\widehat{\chi_3} & \widehat{\chi_3}  & \widehat{\chi_3}+\widehat{\chi_4}\\
		 \hline 
\rho_1 \otimes 	 \chi_1 		 		 \downarrow & 1 & 1 & 1 & 1 & 1 & 0 & 0 & 0 \\
\rho_1 \otimes 	( \chi_2 -\chi_1)        		  \downarrow& 0 &-2 & 0 &-2 & 0 & 0 & 0 & 1\\
\rho_1 \otimes 	( \chi_3 -\chi_2)        		  \downarrow& 0 & 0 & 0 & 0 & 0 & 0 & 0 & 0\\
\rho_1 \otimes 	( \chi_4 -\chi_1)        		 \downarrow & 0 & 0 & 0 & 0 & 0 & 0 & 0 & 0\\
\rho_1 \otimes 	(\phi_1 -\chi_3 -\chi_1) 		 \downarrow & 0 & 0 & 0 & 0 & 0 & 0 & 0 & 0\\
\vdots\quad\rho_1 \otimes 	(\phi_p -\phi_{p-1} )\downarrow\quad\vdots 		    & 0 & 0 & 0 & 0 & 0 & 0 & 0 & 0\\
\hline
(\rho_2 -\rho_1) \otimes 	 \chi_1			  \downarrow& 0 & 0 &-2 &-2 & 0 & 1 & 0 & 0\\
(\rho_2 -\rho_1) \otimes ( \chi_2 -\chi_1)		  \downarrow& 0 & 0 & 0 & 4 & 0 & 0 & 1 & 0\\
(\rho_2 -\rho_1) \otimes ( \chi_3 -\chi_2)		  \downarrow& 0 & 0 & 0 & 0 & 0 & 0 & 0 & 0\\
(\rho_2 -\rho_1) \otimes ( \chi_4 +\chi_3 -\chi_2 -\chi_1)\downarrow& 0 & 0 & 0 & 0 & 0 & 0 & 0 & 0\\
(\rho_2 -\rho_1) \otimes (\phi_1 -\chi_2 -\chi_1)	  \downarrow& 0 & 0 & 0 & 0 & 0 & 0 & 0 & 0\\
\vdots\quad(\rho_2 -\rho_1) \otimes (\phi_p -\phi_{p-1} )\downarrow\quad\vdots	    & 0 & 0 & 0 & 0 & 0 & 0 & 0 & 0\\
\end{array} 
	\]
\medskip
\caption{Restricted characters and map induced by the two inclusions $D_2 \hookrightarrow \Delta(2,2,m) = D_m \times C_2$, for $m \geq 2$ even, not a power of $2$. Here $2 \leq p\leq \frac{m}{2}-1$. Our choice on the generators is $(12)(34) \mapsto s_j s_i s_j$ and $(12) \mapsto \alpha$; any other choice yields an equivalent matrix.}\label{MapD2toDmxC2 m even}
\end{table}

\begin{table}[!htb] 
\footnotesize
\[
\begin{array}{c|ccc||cccccc}
D_m \hookrightarrow  D_m \times C_2 & s_i & (s_i s_j)^r  &  s_j s_i s_j 
& \alpha_1 
& \alpha_2
& \alpha_3
& \alpha_4
&  \alpha_5
& \beta_k \\ 
\hline 
\rho_1 \otimes 	 \chi_1 \downarrow
	 	&  1      & 1   & 1 		&  1    &  0   & 0    &0     &0       & 0     \\
\rho_1 \otimes 	( \chi_2 -\chi_1)        \downarrow
	    &  -2  & 0   & -2			&  0    &  1   & 0     &1     &0	&0\\
\rho_1 \otimes 	( \chi_3 -\chi_2)        \downarrow
	    &  0   & c_r   & -2			&  0    &  1   & 1    &0    &0	&0\\
\rho_1 \otimes 	( \chi_4 -\chi_1)        \downarrow
	    &  0  & c_r   & 2			&  0    &  -1   & 0   &0     &0	&0\\
\rho_1 \otimes 	(\phi_1 -\chi_3 -\chi_1) \downarrow
	    &  0  & b_r & 0			&  0    &  0   & 1   & 0    &1	&0\\ 
\vdots \quad\rho_1 \otimes 	(\phi_p -\phi_{p-1} )\downarrow   \quad \vdots 	
	    &  0    & a_{p,r}   &  0		&  0    &  0   & 0   &0     &0       & \delta_{p,k}\\ 
\hline
(\rho_2 -\rho_1) \otimes 	 \chi_1			  \downarrow  &  &  & \\
(\rho_2 -\rho_1) \otimes ( \chi_2 -\chi_1)		  \downarrow  &  &  & \\
(\rho_2 -\rho_1) \otimes ( \chi_3 -\chi_2)		  \downarrow  &  & \mathbf{0} &  & \multicolumn{6}{c}{\mathbf{0}}\\
(\rho_2 -\rho_1) \otimes (\chi_4 +\chi_3 -\chi_2 -\chi_1) \downarrow  &  &  & \\
(\rho_2 -\rho_1) \otimes (\phi_1 -\chi_2 -\chi_1)	  \downarrow &  &  & \\
\vdots \quad (\rho_2 -\rho_1) \otimes (\phi_p -\phi_{p-1} )  \downarrow \quad \vdots &  &  & \\
	\end{array} 
\]
where 
$\alpha_1 := \sum\limits_{\ell = 1}^4 \chi_\ell +2\sum\limits_p^{\frac{m}{2}-1}\phi_p$,
$\alpha_2 := \chi_2 +\chi_3+\sum\limits_{\ell=1}^{\frac{m}{2}-1}\phi_\ell $,
$\alpha_3 := \chi_3+\sum\limits_{\ell=1}^{\frac{m}{2}-1}\phi_\ell $,
$\alpha_4 := \chi_2 +\chi_4+\sum\limits_{\ell=1}^{\frac{m}{2}-1}\phi_\ell  $,
$\alpha_5 := \sum\limits_{\ell = 1}^{\frac{m}{2}-1}\phi_\ell$,
$\beta_k := \sum\limits_{\ell = k}^{\frac{m}{2}-1}\phi_\ell $.
\medskip
\caption{Restricted characters and map induced by the inclusion $D_m \hookrightarrow \Delta(2,2,m) = D_m \times C_2$, for $m \geq 4$ a power of $2$. Here $a_{p,r} := 2\cos(\frac{2\pi pr}{m})-2\cos(\frac{2\pi (p-1)r}{m})$, $b_r := 2\cos(\frac{2\pi r}{m})-2$, $c_r := (-1)^r -1$,  $\delta_{p,k}$ the Kronecker delta, $2 \leq p, k \leq \frac{m}{2}-1 $ where $1 < r < \frac{m}{2}$ and $1 < k < \frac{m}{2}-1$.}
\label{MapDmtoDmxC2 m power of 2}
\end{table}

\begin{table}[!htb]
\small
\[
\begin{array}{c|cccc||cccc}
D_2 \hookrightarrow D_m \times C_2 &  e &  s_i & \alpha   &\alpha s_i  
& \sum \chi_i & \chi_2 +\widehat{\chi_3} & \widehat{\chi_3}  & \widehat{\chi_3}+\widehat{\chi_4}\\
		 \hline 
\rho_1 \otimes 	 \chi_1 		 \downarrow&  1    & 1  &  1    & 1     & 1 & 0 & 0 & 0\\
\rho_1 \otimes 	( \chi_2 -\chi_1)        \downarrow&  0    & -2 &  0    & -2    & 0 & 0 & 0 & 1\\
\rho_1 \otimes 	( \chi_3 -\chi_1)        \downarrow&  0    &  0 &  0    & 0     & 0 & 0 & 0 & 0\\
\rho_1 \otimes 	( \chi_4 -\chi_2)        \downarrow&  0    &  0 &  0    & 0     & 0 & 0 & 0 & 0\\
\rho_1 \otimes 	(\phi_1 -\chi_2 -\chi_1) \downarrow&  0    &  0 &  0    & 0     & 0 & 0 & 0 & 0\\
\vdots \quad\rho_1 \otimes 	(\phi_p -\phi_{p-1} )\downarrow \quad \vdots   &  0    &  0 &  0    & 0     & 0 & 0 & 0 & 0\\
\hline
(\rho_2 -\rho_1) \otimes 	 \chi_1			  \downarrow&0 &0&-2 &-2& 0 & 1 & 0 & 0\\
(\rho_2 -\rho_1) \otimes ( \chi_2 -\chi_1)		  \downarrow&0 &0& 0 &4 & 0 & 0 & 1 & 0\\
(\rho_2 -\rho_1) \otimes ( \chi_3 -\chi_1)		  \downarrow&0 &0& 0 &0 & 0 & 0 & 0 & 0\\
(\rho_2 -\rho_1) \otimes ( \chi_4 +\chi_3 -\chi_2 -\chi_1)\downarrow&0 &0& 0 &0 & 0 & 0 & 0 & 0\\
(\rho_2 -\rho_1) \otimes (\phi_1 -\chi_2 -\chi_1)	  \downarrow&0 &0& 0 & 0& 0 & 0 & 0 & 0\\
\vdots \quad(\rho_2 -\rho_1) \otimes (\phi_p -\phi_{p-1} )\downarrow \quad \vdots	    &0 &0& 0 &0 & 0 & 0 & 0 & 0\\
\end{array} 
\]
\medskip\caption{ Restricted characters and map induced by the two inclusions $D_2 \hookrightarrow \Delta(2,2,m) = D_m \times C_2$, for $m \geq 4$ a power of $2$. Here $2 \leq p\leq \frac{m}{2}-1$.
Our choice on the generators is $(12)(34) \mapsto s_i $ and $(12) \mapsto \alpha$, and any other choice yields an equivalent matrix. 
}\label{MapD2toDmxC2 m power of 2}
\end{table}

\subsubsection{$G = \Delta(2,3,3)$} This group is isomorphic to the symmetric group $S_4$ with Coxeter presentation
\begin{equation}\label{eqn:Delta233}
	\Delta(2,3,3) = \left\langle s_i, s_j, s_k \; | \; s_i^2,s_j^2,s_k^2,(s_is_j)^3,(s_is_k)^2,(s_js_k)^3 \right\rangle
\end{equation}
and we assume $i < k$. We have again three relevant Coxeter subgroups $H$.

\medskip

\noindent\underline{Case 1: $H = \langle s_i, s_j \rangle \cong D_3$}\\[1ex]
The expanded character table for $D_3$ (from Table~\ref{CharTableDm}), assuming first $i < j$, is
\[
	\begin{array}{c|cccccc}
		 D_3 & e & s_i & s_j & s_is_j & s_js_i & s_is_js_i\\ 
		 \hline 
		 \chi_1 & 1 &  1 & 1 &  1& 1 &  1 \\
		 \chi_2 & 1 & -1 & -1 &  1 & 1 & -1 \\
		 \phi_1 & 2 &  0 &  0 &  -1 &  -1 &  0 \\
	\end{array}
\] 
There are 3 conjugacy classes, $\{e\}$, $\{s_i,s_j,s_is_js_i=s_js_is_j\}$ and $\{s_is_j,s_js_i\}$, which remain unchanged if we swap $s_i$ and $s_j$, hence if $j < i$ the table stays the same and we do not have to treat those two cases separately. The character table of $G$ (Table~\ref{CharTableDelta233}) restricted to $H$ consists on the 1st, 2nd, 2nd, 3rd, 3rd, 2nd columns (since $s_i \sim s_j$, $s_js_i \sim s_is_j$ and $s_is_js_i \sim s_i$):
\[
	\begin{array}{c|cccccc}
		 S_4 \downarrow & e & s_i & s_j & s_is_j & s_js_i & s_is_js_i\\ 
		 \hline 
		 \xi_1 & 1 &  1 & 1 & 1 & 1 & 1\\
		 \xi_2 & 1 &  -1 & -1 & 1 & 1 & -1\\
		 \xi_3 & 2 &  0 & 0 & -1 & -1 & 0\\
		 \xi_4 & 3 &  1 & 1 & 0 & 0 & 1\\
		 \xi_5 & 3 &  -1 & -1 & 0 & 0 & -1\\
	\end{array}
\]
Multiplying rows and dividing by $|H|=6$ we obtain
\[
	\begin{array}{rcl}
			\chi_1 \uparrow & = & \xi_1 + \xi_4\\
			\chi_2 \uparrow & = & \xi_2 + \xi_5\\
			\phi_1 \uparrow & = & \xi_3 + \xi_4 + \xi_5\\
	\end{array}
\]
Equivalently, we have the map of free abelian groups shown in Figure~\ref{InductionDelta233Case1}.
\begin{figure}[h!]
\begin{eqnarray*}
	R_\mathbb{C}\left( H \right) &\to& R_\mathbb{C}\left( G \right)\\
	\mathbb{Z}^3 &\to & \mathbb{Z}^5 \\
	(a,b,c) &\mapsto & (a,b,c,a+c,b+c).
\end{eqnarray*}
\caption{Induction homomorphism from $H = \langle s_i, s_j \rangle \cong D_3$ or $H = \langle s_j, s_k \rangle \cong D_3$ to $G =\langle s_i, s_j, s_k \rangle \cong \Delta(2,3,3)=S_4$, and $i<k$.}\label{InductionDelta233Case1}
\end{figure}

\medskip

\noindent\underline{Case 2: $H = \langle s_j, s_k \rangle \cong D_3$}\\[1ex]
This case is completely analogous to the previous one, so we obtain the same map, also shown in Figure~\ref{InductionDelta233Case1}. 

\medskip

\noindent\underline{Case 3: $H = \langle s_i, s_k \rangle \cong D_2 = C_2 \times C_2$}\\[1ex]
The character table of $G$ (Table~\ref{CharTableDelta233}) restricted to $H$ consists on the 1st, 2nd, 2nd, 5th columns:
\[
	\begin{array}{c|cccc}
		 S_4 \downarrow & e & s_i & s_k & s_is_k\\ 
		 \hline 
		 \xi_1 & 1 &  1 & 1 & 1\\
		 \xi_2 & 1 &  -1 & -1 & 1\\
		 \xi_3 & 2 &  0 & 0 & 2\\
		 \xi_4 & 3 &  1 & 1 & -1\\
		 \xi_5 & 3 &  -1 & -1 & -1\\
	\end{array}
\]
Multiplying the rows with the characters of $H$ (Table~\ref{CharTableC2xC2}) and dividing by $|H|=4$ we obtain (note that $i<k$ already holds)
\[
	\begin{array}{rcl}
			\rho_1 \otimes \rho_1 \uparrow & = & \xi_1 + \xi_3 + \xi_4\\
			\rho_1 \otimes \rho_2 \uparrow & = & \xi_4 + \xi_5\\
			\rho_2 \otimes \rho_1 \uparrow & = & \xi_4 + \xi_5\\
			\rho_2 \otimes \rho_2 \uparrow & = & \xi_2 + \xi_3 + \xi_5
	\end{array}
\]
Equivalently, this is the homomorphism of abelian groups shown in Figure~\ref{InductionDelta233Case3}. Note that this is the first time that of an induction homomorphism with nontrivial kernel.
\begin{figure}[h!]
\begin{eqnarray*}
	R_\mathbb{C}\left( H \right) &\to& R_\mathbb{C}\left( G \right)\\
	\mathbb{Z}^4 &\to & \mathbb{Z}^5 \\
	(a,b,c,d) &\mapsto & (a,d,a+d,a+b+c, b+c+d).
\end{eqnarray*}
\caption{Induction homomorphism from $H = \langle s_i, s_k \rangle \cong C_2 \times C_2$ to $G =\langle s_i, s_j, s_k \rangle \cong \Delta(2,3,3)=S_4$, and $i<k$.}\label{InductionDelta233Case3}
\end{figure}

Now we give the induction homomorphisms with respect to the transformed bases (Tables~\ref{CharTableD2}, \ref{CharTableDm m odd}, \ref{BTCharTableDelta233} in Appendix~\ref{AppendixA}), summarised in Table~\ref{InducedOnS4}.

\begin{table}[!htb]
\[\begin{array}{c|ccc||cccc}
		 D_2 \hookrightarrow S_4  & (1) & (1\ 2) &  (1\ 2) (3\ 4)
		 & (\widetilde{\xi_i}| \sum \chi_i) & (\widetilde{\xi_i}| \chi_2 +\widehat{\chi_3}) & (\widetilde{\xi_i}| \widehat{\chi_3}) & (\widetilde{\xi_i}| \widehat{\chi_4})\\
		 \hline 
		 \widetilde{\xi_1} \downarrow & 1 &  1 &  1            & 1 & 0 & 0 & 0\\
		 \widetilde{\xi_2}\downarrow & 0 & -2 &  0 & 0 & 1 & 0 &  0\\
		 \widetilde{\xi_3}\downarrow & 0 & 0 &  0  & 0 & 0 & 0 &  0\\
		 \widetilde{\xi_4}\downarrow & 0 & 0 & -4  & 0 & 0 & 1 &  1\\
		 \widetilde{\xi_5}\downarrow & 0 & 0 & 0   & 0 & 0 & 0 &  0\\
\end{array}\]
\[\begin{array}{c|ccc||cccc}
		 D_3 \hookrightarrow S_4  & (1) & (12) & (123) 
		 & (\widetilde{\xi_i}| 2\phi_1 +\sum \chi_i) & (\widetilde{\xi_i}| \chi_2 +\phi_1) & (\widetilde{\xi_i}| \phi_1) \\ 
		 \hline 
		 \widetilde{\xi_1} \downarrow & 1 &  1 & 1 & 1 & 0 & 0\\
		 \widetilde{\xi_2} \downarrow & 0 & -2 & 0 & 0 & 1 & 0\\
		 \widetilde{\xi_3} \downarrow & 0 & 0 & -3 & 0 & 0 & 1\\
		 \widetilde{\xi_4} \downarrow & 0 & 0 & 0 & 0 & 0 & 0\\
		 \widetilde{\xi_5} \downarrow & 0 & 0 & 0 & 0 & 0 & 0\\
\end{array}\]
where 
$\widetilde{\xi_1} := \xi_1$, 
$\widetilde{\xi_2} := \xi_2 -\xi_1$, 
$\widetilde{\xi_3} := \xi_3 -\xi_2 -\xi_1$, 
$\widetilde{\xi_4} := \xi_4 -\xi_3 -\xi_1$, 
$\widetilde{\xi_5} := \xi_5 -\xi_4 -\xi_2 +\xi_1$.

\medskip

\caption{Restricted characters and map induced by the inclusions of $D_2$ and $D_3$ into $S_4$. The second inclusion 
$\langle (23), (34) \rangle \cong D_3 \hookrightarrow S_4$ induces the same map as the first one, 
because $(23) \sim (12)$ and $(234) \sim (123)$.}\label{InducedOnS4}
\end{table} 	

\subsubsection{$G = \Delta(2,3,4)$} This group is isomorphic to $S_4 \times C_2$ with Coxeter presentation
$$
	\Delta(2,3,4) = \left\langle s_i, s_j, s_k \; | \; s_i^2,s_j^2,s_k^2,(s_is_j)^3,(s_is_k)^2,(s_js_k)^4 \right\rangle .
$$
The three relevant induction homomorphism are as follows. 

\medskip

\noindent\underline{Case 1: $H = \langle s_i, s_k \rangle \cong D_2 = C_2 \times C_2$}\\[1ex]
The character table of $G$ (Table~\ref{CharTableDelta234}) restricted to $H$  consists on the 1st, 7th, 10th and 2nd columns:
\[
	\begin{array}{c|cccc}
		 S_4\times C_2 \downarrow & e & s_i & s_k & s_is_k\\ 
		 \hline 
		 \rho_1 \otimes \xi_1 & 1 & 1 & 1 & 1 \\
		 \rho_1 \otimes\xi_2 & 1 & -1 & 1 & -1 \\
		 \rho_1 \otimes\xi_3 & 2 & 0 & 2 & 0\\
		 \rho_1 \otimes\xi_4 & 3 & 1 & -1 & 1 \\
		 \rho_1 \otimes\xi_5 & 3 & -1 & -1 & -1 \\
		 \hline
		 \rho_2 \otimes \xi_1 & 1 & -1 & -1 & 1  \\
		 \rho_2 \otimes\xi_2 & 1 & 1 & -1 & -1\\
		 \rho_2 \otimes\xi_3 & 2 & 0 & -2 & 0\\
		 \rho_2 \otimes\xi_4 & 3 & -1 & 1 & 1 \\
		 \rho_2 \otimes\xi_5 & 3 & 1 & 1 & -1
	\end{array}
\]
Suppose first that $i < k$.
Multiplying these rows with the rows of Table~\ref{CharTableC2xC2} we deduce that (note the shortcut in notation)
\[
	\begin{array}{rcll}
			& & \rho_1 \otimes \underline{\phantom{a}} & \rho_2 \otimes \underline{\phantom{a}}\\[1ex]
			\rho_1 \otimes \rho_1 \uparrow & = & 
				\xi_1 + \xi_3 + \xi_4 + & \xi_4 + \xi_5 \\
			\rho_1 \otimes \rho_2 \uparrow & = & 
				\xi_4 + \xi_5 + & \xi_2 + \xi_3 + \xi_5 \\
			\rho_2 \otimes \rho_1 \uparrow & = & 
				\xi_2 + \xi_3 + \xi_5 + & \xi_4 + \xi_5 \\
			\rho_2 \otimes \rho_2 \uparrow & = & 
				\xi_4 + \xi_5 + & \xi_1 + \xi_3 + \xi_4\\	
	\end{array}
\]
On the other hand, if $k < i$, then we should interchange the 2nd and 3rd generators. All in all, we have the homomorphisms of free abelian groups shown in Figure~\ref{InductionDelta234Case1}.
\begin{figure}[h!]
\begin{eqnarray*}
	R_\mathbb{C}\left( H \right) &\to& R_\mathbb{C}\left( G \right)\\
	\mathbb{Z}^4 &\to & \mathbb{Z}^{10} \\
	(a,b,c,d) &\mapsto & (a,c,a+c,a+b+d,b+c+d, d, b, b+d, a+c+d, a+b+c)\ \ \text{if } i < k,\\
	(a,b,c,d) &\mapsto & (a,b,a+b,a+c+d,b+c+d, d, c, c+d, a+b+d, a+b+c)\ \ \text{if } k < i.\\
\end{eqnarray*}
\caption{Induction homomorphism from $H = \langle s_i, s_k \rangle \cong C_2 \times C_2$ to $G =\langle s_i, s_j, s_k \rangle \cong \Delta(2,3,4)=S_4 \times C_2$.}\label{InductionDelta234Case1}
\end{figure}

\medskip

\noindent\underline{Case 2: $H = \langle s_i, s_j \rangle \cong D_3$}\\[1ex]
The characters of $G$ (Table~\ref{CharTableDelta234}) restricted to $H$ consists on the 1st, 7th, 7th, 3rd, 3rd, 7th columns:
\[
	\begin{array}{c|cccccc}
		 S_4\times C_2 \downarrow & e & s_i & s_j & s_is_j & s_js_i & s_is_js_i\\ 
		 \hline 
		 \rho_1 \otimes\xi_1 & 1 & 1 & 1 & 1 & 1 & 1\\
		 \rho_1 \otimes\xi_2 & 1 & -1 & -1 & 1 & 1 & -1 \\
		 \rho_1 \otimes\xi_3 & 2 & 0 & 0 & -1 & -1 & 0\\
		 \rho_1 \otimes\xi_4 & 3 & 1 & 1 & 0 & 0 & 1 \\
		 \rho_1 \otimes\xi_5 & 3 & -1 & -1 & 0 & 0 & -1  \\
		 \hline
		 \rho_2 \otimes \xi_1 & 1 & -1 & -1 & 1 & 1 & -1 \\
		 \rho_2 \otimes\xi_2 & 1 & 1 & 1 & 1 & 1 & 1\\
		 \rho_2 \otimes\xi_3 & 2 & 0 & 0 & -1 & -1 & 0\\
		 \rho_2 \otimes\xi_4 & 3 & -1 & -1 & 0 & 0 & -1\\
		 \rho_2 \otimes\xi_5 & 3 & 1 & 1 & 0 & 0 & 1 
	\end{array}
\]
Multiplying these rows with the rows of character table of $D_3$ above we deduce that (recall that this table is invariant under interchanging the Coxeter generators)
\[
	\begin{array}{rcll}
			& & \rho_1 \otimes \underline{\phantom{a}} & \rho_2 \otimes \underline{\phantom{a}}\\[1ex]
			\chi_1 \uparrow & = & \xi_1 + \xi_4 + & \xi_2 + \xi_5 \\
			\chi_1 \uparrow & = & \xi_2 + \xi_5 + & \xi_1 + \xi_4\\
			\phi_1 \uparrow & = & \xi_3 + \xi_4 + \xi_5 + & \xi_3 + \xi_4 + \xi_5\\
	\end{array}
\]
or, equivalently, the linear map shown in Figure~\ref{InductionDelta234Case2}.
\begin{figure}[h!]
\begin{eqnarray*}
	R_\mathbb{C}\left( H \right) &\to& R_\mathbb{C}\left( G \right)\\
	\mathbb{Z}^3 &\to & \mathbb{Z}^{10} \\
	(a,b,c) &\mapsto & (a,b,c,a+c,b+c, b, a, c, b+c, a+c).
\end{eqnarray*}
\caption{Induction homomorphism from $H = \langle s_i, s_j \rangle \cong D_3$ to $G =\langle s_i, s_j, s_k \rangle \cong \Delta(2,3,4)=S_4 \times C_2$.}\label{InductionDelta234Case2}
\end{figure}

\medskip

\noindent\underline{Case 3: $H = \langle s_j, s_k \rangle \cong D_4$}\\[1ex]
First we expand the character table of $D_4$, assuming $j < k$,
\[
	\begin{array}{c|cccccccc}
		 D_4 & e & s_j & s_k & s_js_k & s_ks_j & s_js_ks_j& s_ks_js_k & s_js_ks_js_k\\ 
		 \hline 
		 \chi_1 & 1 &  1 & 1 &  1& 1 &  1 & 1 &  1\\
		 \chi_2 & 1 & -1 & -1 &  1 & 1 & -1 & -1 &  1 \\
		 \chi_3 & 1 & -1 & 1 &  -1 & -1 & 1 & -1 &  1 \\
		 \chi_4 & 1 & 1 & -1 &  -1 & -1 & -1 & 1 &  1 \\
		 \phi_1 & 2 &  0 &  0 &  0 &  0 &  0 & 0 & -2  \\
	\end{array}
\] 
Note that if $k < j$ we should interchange the characters $\chi_3$ and $\chi_4$ in order to maintain the notation consistent. The characters of $G$ restricted to $H$ are the 1st, 7th, 10th, 4th, 4th, 10th, 7th, 5th columns of Table~\ref{CharTableDelta234}:
\[
	\begin{array}{c|cccccccc}
		 S_4 \times C_2 \downarrow & e & s_2 & s_3 & s_2s_3 & s_3s_2 & s_2s_3s_2& s_3s_2s_3& s_2s_3s_2s_3\\ 
		 \hline 
		 \rho_1 \otimes\xi_1 & 1 & 1 & 1 & 1 & 1 & 1 & 1 & 1\\
		 \rho_1 \otimes\xi_2 & 1 & -1 & 1 & -1 & -1 & 1 & -1 & 1 \\
		 \rho_1 \otimes\xi_3 & 2 & 0 & 2 & 0 & 0 & 2 & 0 & 2\\
		 \rho_1 \otimes\xi_4 & 3 & 1 & -1 & -1 & -1 & -1 & 1 & -1\\
		 \rho_1 \otimes\xi_5 & 3 & -1 & -1 & 1 & 1 & -1  & -1 & -1\\
		 \hline
		 \rho_2 \otimes \xi_1 & 1 & -1 & -1 & 1 & 1 & -1 & -1 & 1\\
		 \rho_2 \otimes\xi_2 & 1 & 1 & -1 & -1 & -1 & -1& 1 & 1\\
		 \rho_2 \otimes\xi_3 & 2 & 0 & -2 & 0 & 0 & -2 & 0 & 2\\
		 \rho_2 \otimes\xi_4 & 3 & -1 & 1 & -1 & -1 & 1 & -1 & -1\\
		 \rho_2 \otimes\xi_5 & 3 & 1 & 1 & 1 & 1 & 1 & 1 & -1
	\end{array}
\]
Multiplying these rows with the rows of the character table of $D_4$ above we deduce that
\[
	\begin{array}{rcll}
			& & \rho_1 \otimes \underline{\phantom{a}} & \rho_2 \otimes \underline{\phantom{a}}\\[1ex]
			\chi_1 \uparrow & = & \xi_1 + \xi_3 + 	& \xi_5 \\
			\chi_2 \uparrow & = & \xi_5 + 			& \xi_1 + \xi_3\\
			\chi_3 \uparrow & = & \xi_2 + \xi_3 + 	& \xi_4\\
			\chi_4\uparrow & = & \xi_4 + 			& \xi_2 + \xi_3\\
			\phi_1 \uparrow & = & \xi_4 + \xi_5 + 	& \xi_4 + \xi_5\\
	\end{array}
\]
The computation in the case $k<j$ is identical, but interchanging $\chi_3$ and $\chi_4$. All in all, we have the induction homomorphisms shown, as maps between abelian groups, in Figure X
\begin{figure}[h!]
\begin{eqnarray*}
	R_\mathbb{C}\left( H \right) &\to& R_\mathbb{C}\left( G \right)\\
	\mathbb{Z}^5 &\to & \mathbb{Z}^{10} \\
	(a,b,c,d,e) &\mapsto & (a,c,a+c,d+e,b+e, b, d, b+d, c+e, a+e)\ \ \text{if } j < k,\\
	(a,b,c,d,e) &\mapsto & (a,d,a+d,c+e,b+e, b, c, b+c, d+e, a+e)\ \ \text{if } k < j.\\
\end{eqnarray*}
\caption{Induction homomorphism from $H = \langle s_j, s_k \rangle \cong D_4$ to $G =\langle s_i, s_j, s_k \rangle \cong \Delta(2,3,4)=S_4 \times C_2$.}\label{InductionDelta234Case3}
\end{figure}

On the other hand, the induction homomorphisms with respect to the transformed bases (Tables~\ref{CharTableDm m odd}, \ref{CharTableDm m even}, \ref{BTCharTableDelta234} in Appendix~\ref{AppendixA}) are summarised in Tables~\ref{D2-S4xC2}, \ref{D4-S4xC2} and~\ref{D3-S4xC2}.

\begin{table}[!htb]
\small
\[
\begin{array}{c|cccc||cccc}
D_2 \hookrightarrow S_4 \times C_2 
		 & (1) & \alpha(12) & \alpha(12)(34) & (34)
		 & (\cdot | \sum \chi_i) & (\cdot | \chi_2+\widehat{\chi_3}) & (\cdot | \widehat{\chi_3}+\widehat{\chi_4}) & (\cdot | \widehat{\chi_4}) \\ 
		 \hline 
\alpha_1 \downarrow 	    	& 1 & 1 & 1 & 1 & 1 & 0 & 0 & 0 \\
\alpha_2 \downarrow	& 0 & -2& 0 & -2& 0 & 1 & 0 & 0 \\
\alpha_3 \downarrow 	& 0 & 0 & 0 & 0 & 0 & 0 & 0 & 0\\
\alpha_4 \downarrow	& 0 & 0 & -2&-2 & 0 & 0 & 1 & 1 \\
\alpha_5 \downarrow 	& 0 & 0 & 0 & 0 & 0 & 0 & 0 & 0\\
	\hline
\alpha_6 \downarrow	    	& 0 & 0 & -2&  2& 0 & 0 & 1 & 0\\
\alpha_7 \downarrow&0 & 0 & 0 & -4& 0 & 0 & 0 & 1 \\
\alpha_8 \downarrow 	    	& 0 & 0 & 0 & 0 & 0 & 0 & 0 & 0\\
\alpha_9 \downarrow     	& 0 & 0 & 0 & 0 & 0 & 0 & 0 & 0 \\
\alpha_{10} \downarrow 	    	& 0 & 0 & 0 & 0 & 0 & 0 & 0 & 0\\
\end{array}
\]
\medskip\caption{Restricted characters and map induced by the inclusion of $D_2$ into $S_4 \times C_2$. Here $\alpha_1,\ldots,\alpha_{10}$ are as in Table~\ref{BTCharTableDelta234}.}\label{D2-S4xC2}
\end{table}

\begin{table}[!htb]
\small
\noindent $
		\begin{array}{c|ccccc}
		 D_4 \hookrightarrow S_4 \times C_2& 
		 (1) & \alpha(13) & (13)(24) & \alpha(12)(34) & (1432)\\ 
		  \hline 
\alpha_1 \downarrow 	    & 1 & 1 & 1 & 1 & 1 \\
\alpha_2 \downarrow & 0 & -2& 0 & 0 & -2  \\
\alpha_3\downarrow & 0 & 0 & 0 & 0 & 0 \\
\alpha_4\downarrow  & 0 & 0 & -4&-2 & 4  \\
\alpha_5\downarrow & 0 & 0 & 0 & 0 & 4 \\
		 \hline
\alpha_6\downarrow	    & 0 & 0 & 0 &-2 & 2 \\
\alpha_7\downarrow  & 0 & 0 & 0 & 0 & 0  \\
\alpha_8\downarrow 	    & 0 & 0 & 0 & 0 & 0  \\
\alpha_9\downarrow	    & 0 & 0 & 0 & 0 & 0  \\
\alpha_{10}\downarrow 	    & 0 & 0 & 0 & 0 & 0  \\
\end{array}$	
\medskip
$\begin{array}{c|ccccc}
D_4 \hookrightarrow S_4 \times C_2 	 & (\cdot | 2\phi_1 +\sum \chi_i) & (\cdot | \chi_2+\widehat{\chi_3} +\phi_1) & (\cdot | \widehat{\chi_3}+\phi_1) & (\cdot | \widehat{\chi_4}+\chi_2) & (\cdot | \phi_1)\\ 
\hline 
\alpha_1 \downarrow  	    & 1 & 0 & 0 & 0 & 0 \\
\alpha_2 \downarrow  & 0 & 0 & 0 & 1 & 0  \\
\alpha_3 \downarrow  & 0 & 0 & 0 & 0 & 0 \\
\alpha_4 \downarrow   & 0 & 1 & 0 &-1 & 1  \\
\alpha_5 \downarrow  & 0 & 0 & -1& 0 & 0 \\
		 \hline
\alpha_6 \downarrow 	    & 0 & 1 & 0 & 0 & 0 \\
\alpha_7 \downarrow   & 0 & 0 & 0 & 0 & 0  \\
\alpha_8 \downarrow  	    & 0 & 0 & 0 & 0 & 0  \\
\alpha_9 \downarrow 	    & 0 & 0 & 0 & 0 & 0  \\
\alpha_{10} \downarrow  	    & 0 & 0 & 0 & 0 & 0  \\
	\end{array}
$
\medskip\caption{Restricted characters (top) and map induced by the inclusion of $D_4$ into $S_4 \times C_2$ (bottom). Here $\alpha_1,\ldots,\alpha_{10}$ are as in Table~\ref{BTCharTableDelta234}.}\label{D4-S4xC2}
\end{table} 

\begin{table}[!htb]
\[
\begin{array}{c|ccc||ccc}
D_3 \hookrightarrow S_4 \times C_2 &  (1) & \alpha(12) & (123) 
& (\cdot | 2\phi_1 +\sum \chi_i) & (\cdot | \chi_2+\phi_1) & (\cdot | \phi_1) \\ 
		 \hline 
\alpha_1 \downarrow            & 1 & 1 & 1 & 1 & 0 & 0 \\
\alpha_2 \downarrow  & 0 & -2& 0 & 0 & 1 & 0 \\
\alpha_3 \downarrow  & 0 & 0 & -3& 0 & 0 & 1\\
\alpha_4 \downarrow  & 0 & 0 & 0 & 0 & 0 & 0\\
\alpha_5 \downarrow  & 0 & 0 & 0 & 0 & 0 & 0 \\
		 \hline
\alpha_6 \downarrow 	    & 0 & 0 & 0 & 0 & 0 & 0 \\
\alpha_7 \downarrow  & 0 & 0 & 0 & 0 & 0 & 0 \\
\alpha_8 \downarrow  	    & 0 & 0 & 0 & 0 & 0 & 0 \\
\alpha_9 \downarrow 	    & 0 & 0 & 0 & 0 & 0 & 0 \\
\alpha_{10} \downarrow  	    & 0 & 0 & 0 & 0 & 0 & 0 \\
\end{array}
\]
\normalsize
\medskip\caption{Restricted characters and map induced by the inclusion of $D_3$ into $S_4 \times C_2$. Here $\alpha_1,\ldots,\alpha_{10}$ are as in Table~\ref{BTCharTableDelta234}.}\label{D3-S4xC2}
\end{table}

\subsubsection{$G = \Delta(2,3,5)$} This group is isomorphic to $A_5 \times C_2$ with Coxeter presentation
$$
	\Delta(2,3,5) = \left\langle s_i, s_j, s_k \; | \; s_i^2,s_j^2,s_k^2,(s_is_j)^3,(s_is_k)^2,(s_js_k)^5 \right\rangle .
$$
We have again three relevant induction homomorphisms. 

\medskip

\noindent\underline{Case 1: $H = \langle s_i, s_k \rangle \cong D_2 = C_2 \times C_2$}\\[1ex]
The characters of $G$ (Table~\ref{CharTableDelta235}) restricted to $H$ consists on the 1st, 8th, 8th, 3rd columns
\[
	\begin{array}{c|cccc}
		 A_5\times C_2 \downarrow & e & s_i & s_k & s_is_k\\ 
		 \hline 
		 \rho_1 \otimes\xi_1 & 1 & 1 & 1 & 1 \\
		 \rho_1 \otimes\xi_2 & 4 & 0 & 0 & 0 \\
		 \rho_1 \otimes\xi_3 & 5 & 1 & 1 & 1\\
		 \rho_1 \otimes\xi_4 & 3 & -1 & -1 & -1 \\
		 \rho_1 \otimes\xi_5 & 3 & -1 & -1 & -1 \\
		 \hline
		 \rho_2 \otimes \xi_1 & 1 & -1 & -1 & 1  \\
		 \rho_2 \otimes\xi_2 & 4 & 0 & 0 & 0\\
		 \rho_2 \otimes\xi_3 & 5 & -1 & -1 & 1\\
		 \rho_2 \otimes\xi_4 & 3 & 1 & 1 & -1 \\
		 \rho_2 \otimes\xi_5 & 3 & 1 & 1 & -1
	\end{array}
\]
Suppose first $i < k$.
Multiplying these rows with the rows of Table~\ref{CharTableC2xC2} we obtain
\[
	\begin{array}{rcll}
			& & \rho_1 \otimes \underline{\phantom{a}} & \rho_2 \otimes \underline{\phantom{a}}\\[1ex]
			\rho_1 \otimes \rho_1 \uparrow & = & 
				\xi_1 + \xi_2 + 2\xi_3 + & \xi_2 +\xi_3 + \xi_4 + \xi_5\\
			\rho_1 \otimes \rho_2\uparrow & = & 
				\xi_2 + \xi_3 + \xi_4 + \xi_5 + & \xi_2 + \xi_3 + \xi_4 + \xi_5\\
			\rho_2 \otimes \rho_1 \uparrow & = & 
				\xi_2 + \xi_3 + \xi_4 + \xi_5 + & \xi_2 + \xi_3 + \xi_4 + \xi_5\\
			\rho_2 \otimes \rho_2 \uparrow & = & 
				\xi_2 + \xi_3 + \xi_4 + \xi_5 + & \xi_1 + \xi_2 + 2\xi_3\\
	\end{array}
\]
If $k < i$ we must interchange the 2nd and 3rd generators, but note that we obtain the same map. All in one, we have one induction map, given as a homomorphism of free abelian groups in Figure~\ref{InductionDelta235Case1}.
\begin{figure}[h!]
\begin{eqnarray*}
	R_\mathbb{C}\left( H \right) &\to& R_\mathbb{C}\left( G \right)\\
	\mathbb{Z}^4 &\to & \mathbb{Z}^{10} \\
	(a,b,c,d) &\mapsto & (a,b+c+d,2a+b+c+d,b+c+d, b+c+d,\\ 
				&		& d, a+b+c, a+b+c+2d, a+b+c, a+b+c).
\end{eqnarray*}
\caption{Induction homomorphism from $H = \langle s_i, s_k \rangle \cong C_2 \times C_2$ to $G =\langle s_i, s_j, s_k \rangle \cong \Delta(2,3,5)=A_5 \times C_2$.}\label{InductionDelta235Case1}
\end{figure}

\medskip

\noindent\underline{Case 2: $H = \langle s_i, s_j \rangle \cong D_3$}\\[1ex]
The characters of $G$ (Table~\ref{CharTableDelta235}) restricted to $H$ consists on the 1st, 8th, 8th, 2nd, 2nd and 8th columns:
\[
	\begin{array}{c|cccccc}
		 A_5\times C_2 \downarrow & e & s_i & s_j & s_is_j& s_js_i& s_is_js_i\\ 
		 \hline 
		 \rho_1 \otimes\xi_1 & 1 & 1 & 1 & 1 & 1 & 1\\
		 \rho_1 \otimes\xi_2 & 4 & 0 & 0 & 1 & 1 & 0\\
		 \rho_1 \otimes\xi_3 & 5 & 1 & 1 & -1 & -1 & 1\\
		 \rho_1 \otimes\xi_4 & 3 & -1 & -1 & 0 & 0 & -1 \\
		 \rho_1 \otimes\xi_5 & 3 & -1 & -1 & 0 & 0 & -1 \\
		 \hline
		 \rho_2 \otimes\xi_1 & 1 & -1 & -1 & 1 & 1 & -1  \\
		 \rho_2 \otimes\xi_2 & 4 & 0 & 0 & 1 & 1 & 0\\
		 \rho_2 \otimes\xi_3 & 5 & -1 & -1 & -1 & -1 & -1\\
		 \rho_2 \otimes\xi_4 & 3 & 1 & 1 & 0 & 0 & 1\\
		 \rho_2 \otimes\xi_5 & 3 & 1 & 1 & 0 & 0 & 1
	\end{array}
\]
Multiplying these rows with the rows of the character table of $D_3$ (which is independent of whether $i<j$ or $j<i$)
\[
	\begin{array}{rcll}
			& & \rho_1 \otimes \underline{\phantom{a}} & \rho_2 \otimes \underline{\phantom{a}}\\[1ex]
			\chi_1 \uparrow & = & \xi_1 + \xi_2 + \xi_3 + 	& \xi_2 + \xi_4 + \xi_5 \\
			\chi_2 \uparrow & = & \xi_2 + \xi_4 + \xi_5 + 	& \xi_1 + \xi_2 + \xi_3 \\
			\phi_1 \uparrow & = & \xi_2 + 2\xi_3+ \xi_4 + \xi_5 + 	& \xi_2 + 2\xi_3+ \xi_4 + \xi_5 \\
	\end{array}
\]
This is then the map of free abelian groups shown in Figure~\ref{InductionDelta235Case2}.
\begin{figure}[h!]
\begin{eqnarray*}
	R_\mathbb{C}\left( H \right) &\to& R_\mathbb{C}\left( G \right)\\
	\mathbb{Z}^3 &\to & \mathbb{Z}^{10} \\
	(a,b,c) &\mapsto & (a,a+b+c,a+2c,b+c, b+c,\\ & & b, a+b+c, b+2c, a+c,a+c).
\end{eqnarray*}
\caption{Induction homomorphism from $H = \langle s_i, s_j \rangle \cong D_3$ to $G =\langle s_i, s_j, s_k \rangle \cong \Delta(2,3,5)=A_5 \times C_2$.}\label{InductionDelta235Case2}
\end{figure}

\begin{table}[!htb]
\scriptsize
\[
\begin{array}{c|cccc}
D_2 \hookrightarrow A_5 \times C_2  
& (1) 
& (12)(35)\alpha 
& (15)(23)\alpha 
& (13)(25) 
\\
\hline 
\beta_1 \downarrow & 1  & 1 & 1 & 1 \\
\beta_2 \downarrow & 0  & 0 & 0 & 0  \\
\beta_3 \downarrow & 0  & 0 & 0 & 0 \\
\beta_4 \downarrow & 0  & 0 & 0 & 0 \\
\beta_5 \downarrow & 0  & 0 & 0 &-4  \\
		 \hline
\beta_6 \downarrow & 0  &-2 &-2 & 0 \\
\beta_7 \downarrow & 0  & 0 & 0 & 0 \\
\beta_8 \downarrow & 0  & 0 & 0 & 0  \\
\beta_9 \downarrow & 0  & 0 & 0 & 0  \\
\beta_{10}\downarrow&0  & 0 & 0 & 0 \\
\end{array}
\]
\[
\begin{array}{c|cccc}
D_2 \hookrightarrow A_5 \times C_2  
& (\cdot | \sum \chi_i )
& (\cdot | \chi_2+ \chi_3 )
& (\cdot | \chi_3 +\chi_4) 
& (\cdot | \chi_4)\\
\hline 
\beta_1 \downarrow  & 1 & 0 & 0 & 0 \\
\beta_2 \downarrow  & 0 & 0 & 0 & 0 \\
\beta_3 \downarrow  & 0 & 0 & 0 & 0 \\
\beta_4 \downarrow  & 0 & 0 & 0 & 0 \\
\beta_5 \downarrow  & 0 & 0 & 0 & 1 \\
		 \hline
\beta_6 \downarrow & 0 & 1 & 1 & 0 \\
\beta_7 \downarrow & 0 & 0 & 0 & 0\\
\beta_8 \downarrow & 0 & 0 & 0 & 0 \\
\beta_9 \downarrow & 0 & 0 & 0 & 0 \\
\beta_{10}\downarrow& 0 & 0 & 0 & 0\\
\end{array}
\] \normalsize
\medskip\caption{Restricted characters (top) and map induced by the inclusion of $D_2$ into $A_5 \times C_2$ (bottom).}\label{D2toA5xC2}
\end{table} 

\medskip

\noindent\underline{Case 3: $H = \langle s_j, s_k \rangle \cong D_5$}\\[1ex]
First we expand the character table for $D_5$ (from Table~\ref{CharTableDm})	
\begin{displaymath}	
\small
\begin{array}{c|cccccccccc}
		 D_5 & e & s_j & s_k & s_js_k & s_ks_j & s_js_ks_j& s_ks_js_k& s_js_ks_js_k & s_ks_js_ks_j& s_js_ks_js_ks_j\\ 
		 \hline 
		 \chi_1 & 1 &  1 & 1 &  1& 1 &  1 & 1 &  1& 1 &  1\\
		 \chi_2 & 1 & -1 & -1 &  1 & 1 & -1 & -1 &  1 & 1 &  -1\\
		 \phi_1 & 2 &  0 &  0 &  \varphi-1 & \varphi-1 &  0 & 0 & \varphi & \varphi&  0 \\
		 \phi_2 & 2 &  0 &  0 &  \varphi &  \varphi &  0 & 0 & \varphi-1& \varphi-1 &  0 \\
	\end{array}
\end{displaymath}
where $\varphi$ is the golden ratio $\frac{1+\sqrt{5}}{2}$, and we have used $2\cos\left(\frac{2\pi}{5}\right) = \varphi-1$ and $2\cos\left(\frac{4\pi}{5}\right) = \varphi$. Note that this table is independent of interchanging $s_i$ with $s_j$. Next we restrict the characters of $G$ (Table~\ref{CharTableDelta235}) to $H$, that is, the 1st, 8th, 8th, 4th, 4th, 8th, 8th, 5th, 5th and 8th columns. 
\[
	\begin{array}{c|cccccccccc}
		 A_5\times C_2 \downarrow & e & s_j & s_k & s_js_k & s_ks_j & s_js_ks_j& s_ks_js_k& s_js_ks_js_k & s_ks_js_ks_j& s_js_ks_js_ks_j\\ 
		 \hline 
		 \rho_1 \otimes\xi_1 & 1 & 1 & 1 & 1 & 1 & 1 & 1 & 1 & 1 & 1\\
		 \rho_1 \otimes\xi_2 & 4 & 0 & 0 & -1 & -1 & 0 & 0 & -1 & - 1 & 0\\
		 \rho_1 \otimes\xi_3 & 5 & 1 & 1 & 0 & 0 & 1 & 1 & 0 & 0 & 1\\
		 \rho_1 \otimes\xi_4 & 3 & -1 & -1 & \varphi & \varphi & -1 & -1 & -\varphi+1 & -\varphi+1 & -1\\
		 \rho_1 \otimes\xi_5 & 3 & -1 & -1 & -\varphi+1 & -\varphi+1 & -1 & -1 & \varphi & \varphi & -1\\
		 \hline
		 \rho_2 \otimes\xi_1 & 1 & -1 & -1 & 1 & 1 & -1 & -1 & 1 & 1 & -1 \\
		 \rho_2 \otimes\xi_2 & 4 & 0 & 0 & -1 & -1 & 0 & 0 & -1 & -1 & 0\\
		 \rho_2 \otimes\xi_3 & 5 & -1 & -1 & 0 & 0 & -1 & -1 & 0 & 0 & -1\\
		 \rho_2 \otimes\xi_4 & 3 & 1 & 1 & \varphi & \varphi & 1 & 1 & -\varphi+1 & -\varphi+1 & 1\\
		 \rho_2 \otimes\xi_5 & 3 & 1 & 1 & -\varphi +1 & -\varphi+1 & 1 & 1 & \varphi & \varphi & 1
	\end{array}
\]
\begin{remark}
With a non-conjugated choice of Coxeter generators, it would be the 5th instead of the 4th column, or, equivalently, swapping $\rho_i \otimes \xi_4$ with $\rho_i \otimes \xi_5$ for $i=1$ and 2.
\end{remark}
Multiplying these rows with the rows of character table of $D_5$ above, and using that $\varphi^2-\varphi = 1$, we obtain
\[
	\begin{array}{rcll}
			& & \rho_1 \otimes \underline{\phantom{a}} & \rho_2 \otimes \underline{\phantom{a}}\\[1ex]
			\chi_1 \uparrow & = & 
				\xi_1 + \xi_3 + & \xi_4 + \xi_5\\
			\chi_2 \uparrow & = & 
				\xi_4 + \xi_5 + & \xi_1 + \xi_3 \\
			\phi_1 \uparrow & = & 
				\xi_2 + \xi_3 + \xi_4 + & \xi_2 + \xi_3 + \xi_4\\
			\phi_2 \uparrow & = & 
				\xi_2 + \xi_3 + \xi_5 + & \xi_2 + \xi_3 + \xi_5\\
	\end{array}
\]
This gives the homomorphism of free abelian groups in Figure~\ref{InductionDelta235Case3}.
\begin{figure}[h!]
\begin{eqnarray*}
	R_\mathbb{C}\left( H \right) &\to& R_\mathbb{C}\left( G \right)\\
	\mathbb{Z}^4 &\to & \mathbb{Z}^{10} \\
	(a,b,c,d) &\mapsto & (a,c+d,a+c+d,b+c, b+d,\\ & & b, c+d, b+c+d, a+c,a+d).
\end{eqnarray*}
\caption{Induction homomorphism from $H = \langle s_j, s_k \rangle \cong D_5$ to $G =\langle s_i, s_j, s_k \rangle \cong \Delta(2,3,5)=A_5 \times C_2$.}\label{InductionDelta235Case3}
\end{figure}

We finish by giving the same induction homomorphisms but this time with respect to the transformed bases  (Tables~\ref{CharTableD2}, \ref{CharTableDm m odd}, \ref{BTCharTableDelta235} in Appendix~\ref{AppendixA}) in Tables~\ref{D2toA5xC2}, \ref{D3toA5xC2} and~\ref{D5toA5xC2}.

\begin{table}[!htb]
\small
\[ 
	\begin{array}{c|ccc||ccc}
D_3 \hookrightarrow A_5 \times C_2  
& (1) 
& (12)(34)\alpha 
& (123) 
& (\cdot | \sum \chi_i +2 \phi_1 )
& (\cdot | \chi_2+ \phi_1 )
& (\cdot | \phi_1 )
\\
		 \hline 
\beta_1 \downarrow & 1 & 1 & 1 & 1 & 0 & 0 \\
\beta_2 \downarrow & 0 & 0 & 0 & 0 & 0 & 0\\
\beta_3 \downarrow & 0 & 0 & -3& 0 & 0 & 1 \\
\beta_4 \downarrow & 0 & 0 & 0 & 0 & 0 & 0 \\
\beta_5 \downarrow & 0 & 0 & 0 & 0 & 0 & 0\\
		 \hline
\beta_6 \downarrow & 0 &-2 & 0 & 0 & 1 & 0\\
\beta_7 \downarrow & 0 & 0 & 0 & 0 & 0 & 0\\
\beta_8 \downarrow & 0 & 0 & 0 & 0 & 0 & 0\\
\beta_9 \downarrow & 0 & 0 & 0 & 0 & 0 & 0 \\
\beta_{10}\downarrow&0 & 0 & 0 & 0 & 0 & 0\\

	\end{array}
\]
\normalsize
\medskip\caption{Restricted characters and map induced by the inclusion of $D_3$ into $A_5 \times C_2$.}\label{D3toA5xC2}
\end{table} 

\begin{table}[!htb]
\footnotesize
\[ 
	\begin{array}{c|cccc||cccc}
D_5 \hookrightarrow A_5 \times C_2  
& (1) 
& (12345) 
& (12354) 
& (12)(34)\alpha 
& (\cdot | \sum \chi_i +2\sum \phi_i )
& (\cdot | \chi_2+ \sum\phi_i )
& (\cdot | \sum\phi_i )
& (\cdot | \phi_2 )
\\
		 \hline 
\beta_1 \downarrow & 1  & 1                    & 1                     & 1 & 1 & 0 & 0 & 0\\
\beta_2 \downarrow & 0  & \sqrt{5}             & -\sqrt{5}             & 0 & 0 & 0 & 0 & 1\\
\beta_3 \downarrow & 0  & 0                    & 0                     & 0 & 0 & 0 & 0 & 0\\
\beta_4 \downarrow & 0  & \frac{5+\sqrt{5}}{2} & \frac{5-\sqrt{5}}{2}  & 0 & 0 & 0 & -1& 0\\
\beta_5 \downarrow & 0  & 0                    & 0                     & 0 & 0 & 0 & 0 & 0\\
		 \hline
\beta_6 \downarrow & 0  & 0                    & 0                     &-2 & 0 & 1 & 0 & 0\\
\beta_7 \downarrow & 0  & 0                    & 0                     & 0 & 0 & 0 & 0 & 0\\
\beta_8 \downarrow & 0  & 0                    & 0                     & 0 & 0 & 0 & 0 & 0\\
\beta_9 \downarrow & 0  & 0                    & 0                     & 0 & 0 & 0 & 0 & 0\\
\beta_{10}\downarrow&0  & 0                    & 0                     & 0 & 0 & 0 & 0 & 0\\

	\end{array}
\]
\normalsize
\medskip\caption{Restricted characters and map induced by the inclusion of $D_5$ into $A_5 \times C_2$.}\label{D5toA5xC2}
\end{table}

\end{document}